\newtheorem{thm}{Theorem}[section]
\newtheorem{prop}[thm]{Proposition}
\newtheorem{lem}[thm]{Lemma}
\newtheorem{cor}[thm]{Corollary}
\numberwithin{equation}{section}
\theoremstyle{definition}
\newtheorem{definition}[thm]{Definition}
\newtheorem{remark}[thm]{Remark}
\newtheorem{notation}[thm]{Notation}
\newcommand{\qqed}{\hspace*{\fill}$\Box$}
\newcommand{\rank}{\operatorname{rank}}
\newcommand{\Db}{{\rm D}^{\rm b}}
\newcommand{\CH}{{\rm CH}}
\newcommand{\Pic}{{\rm Pic}}
\newcommand{\rk}{{\rm rk}}
\newcommand{\ch}{{\rm ch}}
\newcommand{\ko}{{\mathcal O}}
\newcommand{\IC}{\mathbb{C}}
\newcommand{\IN}{\mathbb{N}}
\newcommand{\IP}{\mathbb{P}}
\newcommand{\IQ}{\mathbb{Q}}
\newcommand{\IZ}{\mathbb{Z}}
\newcommand{\ZZ}{\mathbb{Z}}
\newcommand{\PP}{\mathbb{P}}
\DeclareMathOperator{\Gal}{Gal}
\renewcommand{\to}{\xymatrix@1@=15pt{\ar[r]&}}
\renewcommand{\rightarrow}{\xymatrix@1@=15pt{\ar[r]&}}
\renewcommand{\mapsto}{\xymatrix@1@=15pt{\ar@{|->}[r]&}}
\renewcommand{\twoheadrightarrow}{\xymatrix@1@=15pt{\ar@{->>}[r]&}}
\renewcommand{\hookrightarrow}{\xymatrix@1@=15pt{\ar@{^(->}[r]&}}
\newcommand{\congpf}{\xymatrix@1@=15pt{\ar[r]^-\sim&}}
\renewcommand{\cong}{\simeq}
\newcommand{\Fermat}{Y}
\newcommand{\Group}{\IZ/5\IZ}
\newcommand{\Godeaux}{X}
\newcommand{\betacom}{B}
\newcommand{\autGodeaux}{\mathrm{Aut}(\Godeaux)}
\newcommand{\golden}{\Phi}
\begin{document}

\newboolean{xlabels} 
\newcommand{\xlabel}[1]{ 
                        \label{#1} 
                        \ifthenelse{\boolean{xlabels}} 
                                   {\marginpar[\hfill{\tiny #1}]{{\tiny #1}}} 
                                   {} 
                       } 
\setboolean{xlabels}{false} 

\title[Derived category of Godeaux surface]{On the derived category of the classical Godeaux surface}

\author[B\"ohning]{Christian B\"ohning$^1$}
\address{Christian B\"ohning, Fachbereich Mathematik der Universit\"at Hamburg\\
Bundesstra\ss e 55\\
20146 Hamburg, Germany}
\email{christian.boehning@math.uni-hamburg.de}

\author[Bothmer]{Hans-Christian Graf von Bothmer}
\address{Hans-Christian Graf von Bothmer, Fachbereich Mathematik der Universit\"at Hamburg\\
Bundesstra\ss e 55\\
20146 Hamburg, Germany}
\email{hans.christian.v.bothmer@uni-hamburg.de}

\author[Sosna]{Pawel Sosna$^2$}
\address{Pawel Sosna, Fachbereich Mathematik der Universit\"at Hamburg\\
Bundesstra\ss e 55\\
20146 Hamburg, Germany}
\email{pawel.sosna@math.uni-hamburg.de}

\thanks{$^1$ Supported by Heisenberg-Stipendium BO 3699/1-1 of the DFG (German Research Foundation)}
\thanks{$^2$ Supported by the RTG 1670 of the  DFG (German Research Foundation)}

\begin{abstract}
We construct an exceptional sequence of length $11$ on the classical Godeaux surface $X$ which is the $\Group$-quotient of the Fermat quintic surface in $\mathbb{P}^3$. This is the maximal possible length of such a sequence on this surface which has Grothendieck group $\mathbb{Z}^{11} \oplus \Group$. In particular, the result answers Kuznetsov's Nonvanishing Conjecture, which concerns Hochschild homology of an admissible subcategory, in the negative. The sequence carries a symmetry when interpreted in terms of the root lattice of the simple Lie algebra of type $E_8$.  We also produce explicit nonzero objects in the (right) orthogonal to the exceptional sequence.
\end{abstract}

\maketitle

\section{Introduction}\xlabel{sIntroduction}

The bounded derived category of coherent sheaves $\Db(X)$ on a smooth projective variety $X$ (always over $\mathbb{C}$ in the following) may be viewed as a categorification of the Grothendieck group of $X$ or the Chow ring of $X$, both of which tend to be very intricate objects in their own right already. Moreover, see, for example, \cite{Kontsevich}, \cite{Orlov}, \cite{Tabuada}, there is the intuition that $\Db(X)$ should be a version of the non-commutative motive of $X$, with decompositions of the (classical) Chow motive $h(X)$ of $X$ being reflected in a suitable sense by semi-orthogonal decompositions of $\Db (X)$. Recently (see, for example, \cite{Rouquier}, \cite{Kawamata}, \cite{Kuz10}) a number of results as well as conjectures try to link semi-orthogonal decompositions in derived categories to the birational geometry of $X$, including very subtle features such as the rationality or irrationality of $X$ which do not seem to be detected by sheaf-cohomological (non-categorical) data. However, the best understood examples considered so far mainly consist of varieties close to the toric and rational-homogeneous ones as well as some Fano hypersurfaces. We feel that in many ways the optimism radiated by existing conjectures is not reflected in the data one can sample from these varieties. In this paper we study the Godeaux surface. 
\smallskip

It follows from Serre duality that the derived category of a Calabi-Yau manifold is indecomposable, that is, it does not admit any non-trivial semi-orthogonal decomposition. Furthermore, varieties of general type with globally generated canonical bundle do not have exceptional objects, see \cite{Okawa}. However, on surfaces of general type $X$ with $p_g=q=0$ every line bundle is exceptional and one may hope that interesting semi-orthogonal decompositions exist which may yield a nontrivial testing ground for existing conjectures. See \cite{BCP} for a survey on these surfaces in general as well as further references. In this paper we study the classical Godeaux surface $X$ which is the $\Group$-quotient of the Fermat hypersurface in $\mathbb{P}^3$ given by $x_1^5 + x_2^5 + x_3^5 + x_4^5 =0$ by the action $(x_1:x_2:x_3:x_4) \mapsto (\xi^1 x_1: \xi^2 x_2 : \xi^3 x_3 : \xi^4 x_4)$ for $\xi \in \Group$. One knows that the Chow motive of this surface splits as a direct sum of Lefschetz motives \cite{GulPed}
\[
h(X) \simeq 1 \oplus 9\mathbb{L} \oplus \mathbb{L}^2
\]
and hence that $X$ has the same motive as a rational surface with the same Betti numbers. The Grothendieck group of $X$ is $\mathbb{Z}^{11}\oplus \Group$, hence $X$ does not admit a full exceptional sequence, since the existence of the latter would imply that the Grothendieck group is free. One may therefore conjecture that $\Db (X)$ has an exceptional sequence of length $11$ corresponding to the ``trivial commutative part of the motive" and some nontrivial genuinely non-commutative semi-orthogonal complement to this sequence. This expectation turns out to be correct and is the main result of this paper:\smallskip

\noindent
\textbf{Theorem \ref{tMain}}
\textit{Let $X$ be the classical Godeaux surface. There exists a semi-orthogonal decomposition
\[
\Db (X) = \left\langle \mathcal{A},\mathcal{L}_1, \dots ,\mathcal{L}_{11} \right\rangle 
\]
where $( \mathcal{L}_1, \dots ,\mathcal{L}_{11} )$ is an exceptional sequence of maximal length consisting of line bundles on $X$ and $\mathcal{A}\neq 0$ is the right orthogonal to this sequence.}
\smallskip

Note, however, that on the categorical level, $\Db(X)$ is certainly not equivalent to the derived category of a rational surface, because derived equivalent varieties have equal Kodaira dimension, see \cite{Orlov3}.\smallskip

Here is the roadmap of the paper:  in Sections \ref{sKgroup} and \ref{sAuto} we assemble some background material on the Grothendieck group, cohomology and automorphisms of the Godeaux surface $X$, all of which is more or less well known. Section \ref{sDegree1} can be said to be the technical heart of the paper: we completely classify effective degree $1$ divisors on $X$. Here degree always means degree with respect to $K_X$. In particular, we obtain a configuration of fifty  $\Group$-invariant elliptic quintic curves on the Fermat surface $Y$ (this configuration is obtained by the  methods of \cite{Reid}), and describe the intersection pairing between their images on the Godeaux $X$. In Section \ref{sE8} we describe the $E_8$-symmetry of the situation explicitly: the group of divisors modulo numerical equivalence $N(X)$ on $X$ is a lattice of type $\mathbf{1}\oplus (-E_8)$ with $-E_8$ the negative of the root lattice of the simple Lie algebra of type $E_8$.  We use the $E_8$-symmetry to produce a numerically semiorthogonal sequence of vectors in the Grothendieck group of $X$. 
In Section \ref{sTower} we review the Campedelli model of the Godeaux surface, illustrate how it provides a convenient set-up for calculations with line bundles on $X$ in terms of data on $\mathbb{P}^1\times \mathbb{P}^1$, and prove some vanishing results. The short Section \ref{sTorsion} explains the role of the torsion in the Picard group of $X$.  In Section \ref{sExceptional} we lift the above numerically semiorthogonal sequence to the exceptional sequence $\mathcal{L}_1, \dots ,\mathcal{L}_{11}$ using the vanishing results of Sections \ref{sTower} and \ref{sTorsion}. 

The existence of the above decomposition answers Kuznetsov's Nonvanishing Conjecture about the Hochschild homology of an admissible subcategory, \cite[Conj.\ 9.1]{Kuz09}, in the negative. In fact, the Hochschild homology of $\mathcal{A}$ is zero, but $\mathcal{A}$ itself is not.

In Section \ref{sOrthogonal}, we produce some explicit nonzero objects in the complement $\mathcal{A}$. They
arise as mapping cones of morphisms $\mathcal{O}_X \to \mathcal{O}_{\tau}[2]$ where $\mathcal{O}_{\tau}$ is a nontrivial torsion line bundle on $X$. This is based on the fact that the triangulated subcategory generated by $\mathcal{O}_{\tau}$ and $\mathcal{O}$ is in the right-orthogonal to $\left\langle \mathcal{L}_2, \dots ,\mathcal{L}_{11}\right\rangle $. 
It would be interesting, but probably require some additional ideas, to completely describe $\mathcal{A}$ as a category. 

In Section \ref{sRigidity} we investigate the behaviour of the subcategory $\langle \mathcal{L}_1, \dots , \mathcal{L}_{11}\rangle$ when $X=X_0$ varies in the family of $\ZZ/5$-torsion numerical Godeaux surfaces (all of whose canonical models are quotients of $\ZZ/5$-invariant quintics in $\PP^3$): if $X_t$, $t$ a deformation parameter, is a surface sufficiently close to $X_0$ in the family, then we also have a decomposition $\mathrm{D}^b (X_t ) = \langle \mathcal{A}_t, \mathcal{L}_{1,t}, \dots , \mathcal{L}_{11, t} \rangle$ where $\mathcal{L}_{i,t}$ are line bundles which are deformations of the $\mathcal{L}_i$, and $\mathrm{K}_0 (\mathcal{A}_t)= \ZZ/5$. Moreover, the subcategory $\langle \mathcal{L}_{1, t}, \dots , \mathcal{L}_{11, t}\rangle$ does not vary in a small neighbourhood of the generic point of the family: all these categories are equivalent there.  This is the same phenomenon as observed in the paper \cite{A-O12} which appeared shortly after the first version of the present paper was posted, and was an important inspiration for us to add Section \ref{sRigidity} to the text.
 
This paper makes frequent use of computer algebra computations. Most are linear algebra or combinatorial and could be done by hand, but we do them on a computer for convenience. Genuine Gr\"obner basis computations are needed in the classification of elliptic curves on $X$ and in Lemma \ref{l:TorsionEL}.
 
\noindent
\textbf{Acknowledgements.} We thank Marcello Bernardara, Fabrizio Catanese, Claudio Pedrini and Helge Ruddat for useful discussions and Sven Porst for procuring the essential \cite{Reid}. We thank Sergey Galkin, Ludmil Katzarkov, Alexander Kuznetsov and Dmitri Orlov for comments and discussions on the first version of the text. 

\section{The cohomology and K-theory of the Godeaux surface}\xlabel{sKgroup}

Let $X = Y/(\Group)$ be the Godeaux surface where $Y:= \{ x_1^5 + \dots + x_4^5 =0 \} \subset \mathbb{P}^3$ is the Fermat (hyper)surface with action of a generator $\xi \in \Group$ given by
\[
(x_1:x_2:x_3:x_4) \mapsto (\xi^1 x_1: \xi^2 x_2 : \xi^3 x_3 : \xi^4 x_4)\, .
\]
We denote the quotient map by $p\colon Y \to X$. We have $K_X^2 = 1$, $p_g=q=0$, hence $\mathrm{Pic} (X) \simeq H^2 (X,\mathbb{Z})$ by the exponential sequence, and the integral homology and cohomology of $X$ are as follows:

\begin{gather*}
H_i (X,\mathbb{Z})= H^i (X,\mathbb{Z}) = \mathbb{Z}\; \mathrm{for} \; i=0,4, \\ \quad H_1 (X,\mathbb{Z} ) = \pi_1 (X)^{\mathrm{ab}} = \Group \simeq H^3 (X,\mathbb{Z} )
\end{gather*}
and
\begin{gather*}
H_2 (X,\mathbb{Z} ) \simeq H^2 (X,\mathbb{Z} ) \simeq (H_2(X,\IZ)/\text{tors})\oplus \text{tors}(H_1(X,\IZ)) \simeq \mathbb{Z}^9\oplus \Group\, , \\
H^1 (X,\mathbb{Z}) \simeq H_3 (X,\mathbb{Z}) \simeq \{ 0 \}\, .
\end{gather*}

Thus not all integral cohomology is algebraic on $X$ (because $H^3 (X,\mathbb{Z})$ is nontrivial), but rationally this is true. The intersection pairing
\[
\langle \cdot , \cdot \rangle\colon N(X) \times N(X) \to \mathbb{Z}
\]
on the group of divisors modulo numerical equivalence $N(X)\simeq \mathrm{Pic} (X) /(\mathrm{tors})$ is a nondegenerate unimodular pairing.
\smallskip

Recall the following facts about the Grothendieck group $K(W)$ of a smooth variety $W$. 
The Chern character defines a ring homomorphism
\[\ch \colon K(W) \rightarrow \text{CH}^*(W)_\IQ,\]
where $\text{CH}^*(W)_\IQ=\text{CH}^*(W)\otimes\IQ$ and $\text{CH}^*(W)$ is the Chow ring of cycles graded by codimension. In fact, the Chern character defines an isomorphism between $K(W)_\IQ$ and $\text{CH}^*(W)_\IQ$. If $W$ is a surface, then there are the following isomorphisms (the first two are true in any dimension)
\[\rk\colon F^0K(W)/F^1K(W)\cong \CH^0(W)\cong\IZ,\]
\[c_1\colon F^1K(W)/F^2K(W)\cong \Pic(W),\]
\[c_2\colon F^2K(W)\cong \text{CH}^2(W),\]
where $F^iK(W)$ is the subgroup generated by sheaves with support of codimension $\geq i$ (see \cite[Ex.\ 15.3.6]{Fulton}). Thus, if $\ch\colon K(W)\rightarrow \text{CH}^*(W)$ is defined over $\IZ$, then it is automatically injective. Note that $\text{CH}^0(W)\oplus\text{CH}^2(W)$ is always contained in the image of $\ch$. If $\frac{1}{2}c_1(\mathcal{L})^2 \in \IZ$ for any line bundle $\mathcal{L}\in \Pic(W)$, then $\ch$ is easily seen to be surjective as well. 

For the Godeaux surface, however, the Chern character is not integral. Indeed, $\ch(K_X)=(1,K_X,\frac{1}{2})$. Still, the Bloch conjecture holds for the Godeaux surface and hence $\CH^2(X)\cong \IZ$. We have the following

\begin{prop}\label{grothgroup}
The Grothendieck group of the Godeaux surface is isomorphic to $\IZ^{11}\oplus \IZ/5\IZ$.
\end{prop}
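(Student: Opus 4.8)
The plan is to exploit the topological (codimension-of-support) filtration $F^0 K(X) \supseteq F^1 K(X) \supseteq F^2 K(X) \supseteq 0$ recalled above, whose graded pieces are identified by $\rk$, $c_1$ and $c_2$ with $\CH^0(X) \cong \IZ$, $\Pic(X) \cong \IZ^9 \oplus \Group$ and $\CH^2(X)$, respectively. Since the Bloch conjecture holds for $X$ we have $\CH^2(X) \cong \IZ$, so all three graded pieces are finitely generated, $K(X)$ is a finitely generated abelian group, and its associated graded object is $\IZ \oplus (\IZ^9 \oplus \Group) \oplus \IZ$. It then remains to show that the two extensions reconstructing $K(X)$ from this graded object are as untwisted as possible, i.e. that $K(X) \cong \IZ^{11} \oplus \Group$.

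First I would dispose of the free part. Because $\ch \otimes \IQ \colon K(X)_\IQ \to \CH^*(X)_\IQ$ is an isomorphism and $\CH^*(X)_\IQ \cong \IQ^{11}$, the kernel of $\ch$ is exactly the torsion subgroup of $K(X)$, while the image of $K(X)$ is a full lattice of rank $11$; hence $K(X)/\mathrm{tors} \cong \IZ^{11}$. As this quotient is free, the sequence $0 \to \mathrm{tors}(K(X)) \to K(X) \to \IZ^{11} \to 0$ splits, and the whole problem reduces to proving $\mathrm{tors}(K(X)) \cong \Group$.

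For the torsion I would argue directly. Since $F^0 K(X)/F^1 K(X) \cong \IZ$ is torsion-free we have $\mathrm{tors}(K(X)) = \mathrm{tors}(F^1 K(X))$, and since $F^2 K(X) \cong \CH^2(X) \cong \IZ$ is also torsion-free, the map $c_1$ embeds $\mathrm{tors}(F^1 K(X))$ into the torsion of $\Pic(X)$, which is $\Group$. To see this embedding is onto, pick a generator $\tau$ of $\mathrm{tors}(\Pic(X))$ with corresponding line bundle $\ko_X(\tau)$ and set $\alpha := [\ko_X(\tau)] - [\ko_X] \in F^1 K(X)$, which maps to $\tau$ under $c_1$. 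Because $\tau$ is torsion, $\ch(\ko_X(\tau)) = 1$ in $\CH^*(X)_\IQ$, so $\alpha$ lies in $\ker(\ch) = \mathrm{tors}(K(X))$; in particular $5\alpha$ is a torsion element of $F^2 K(X) \cong \IZ$ and therefore vanishes, so $\alpha$ has order exactly $5$. Thus $\mathrm{tors}(K(X)) = \mathrm{tors}(F^1 K(X)) \cong \Group$, and combining this with the splitting above yields $K(X) \cong \IZ^{11} \oplus \Group$.

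The one genuinely nonformal point — the main obstacle — is verifying that the torsion of $\Pic(X)$ is not absorbed into $F^2 K(X)$ under the extension $0 \to F^2 K(X) \to F^1 K(X) \to \Pic(X) \to 0$; a priori its class in $\Ext^1(\Group, \IZ) \cong \Group$ could be nonzero, in which case $F^1 K(X)$ would be torsion-free and one would instead obtain $K(X) \cong \IZ^{11}$. The computation $5\alpha = 0$ is exactly what excludes this, and it rests essentially on the torsion-freeness of $\CH^2(X)$, that is, on the Bloch conjecture for the Godeaux surface.
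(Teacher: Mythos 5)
Your proof is correct, and while it uses the same coniveau-filtration framework as the paper (the graded pieces $F^0/F^1 \cong \IZ$, $F^1/F^2 \cong \Pic(X)$, $F^2 \cong \CH^2(X) \cong \IZ$ via Bloch, plus the splitting of the free quotient), the crucial step is handled by a genuinely different argument. The paper rules out the non-split extension $0 \to F^2K(X) \to F^1K(X) \to \Pic(X) \to 0$ abstractly: if the $\IZ/5\IZ$ were absorbed, the point class $[\ko_p]$, a generator of $F^2K(X)$, would become divisible by $5$ in $K(X)$, contradicting its primitivity, which is certified by the Euler pairing $\chi(\ko_p,\ko_X)=1$. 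You instead exhibit the torsion explicitly: the class $\alpha = [\ko_X(\tau)]-[\ko_X]$ lies in $\ker(\ch) = \mathrm{tors}(K(X))$ because $\ch$ kills torsion line bundles, it is nonzero because $c_1(\alpha)=\tau$, and it has order $5$ by torsion-freeness of $\CH^2(X)$. The paper's route is slightly slicker (one Riemann--Roch evaluation settles everything), whereas yours buys an explicit generator of $\mathrm{tors}(K(X))$ -- which is in fact the very class $[\ko_X]-[\ko_{\tau}]$ that reappears in Section \ref{sOrthogonal} of the paper, where its $5$-torsion property is proved by yet a third method (push-pull along the Fermat cover, $5([\ko_X]-[\ko_{\tau}]) = p_*p^*([\ko_X]-[\ko_{\tau}])=0$); your argument shows this can also be deduced purely from the filtration and Bloch's conjecture.
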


\begin{proof}
Consider the above isomorphisms. For the Godeaux surface we have $H^2(X,\IZ)\cong\Pic(X)\cong \IZ^9\oplus \IZ/5\IZ$, because $H^2(X,\IZ)\cong (H_2(X,\IZ)/\text{tors})\oplus \text{tors}(H_1(X,\IZ))$, see \cite[Thm.\ IV.3.5]{FrGr}. Hence, we have an exact sequence
\[\begin{xy}\xymatrix{ 0 \ar[r] & \IZ \ar[r] & F^1K(X) \ar[r] & \IZ^9\oplus \IZ/5\IZ \ar[r] & 0.}\end{xy}\]
Clearly, $\rk(F^1K(X))\geq 10$. If we had strict inequality, then $\rk(F^0K(X))=\rk(K(X))\geq 12$, giving a contradiction, since we know that the latter rank is 11. Therefore $F^1K(X)\cong \IZ^{10}\oplus T$ for some finite abelian group $T$ because the class of the structure sheaf $\mathcal{O}_p$ of a point $p\in X$, which is a generator of $\mathrm{CH}^2 (X)$, is primitive in $\mathrm{K}_0 (X)$, for example because $\chi (\mathcal{O}_p, \mathcal{O}_X ) = 1$. 
A similar argument with the exact sequence involving $F^1$ and $F^0$ finishes the proof.
\end{proof}

\section{The automorphism group of the Godeaux surface}\xlabel{sAuto}
The automorphism group $\mathrm{Aut}(X)$ of the classical Godeaux surface $X$ has been determined in \cite{Maggiolo} and 
is generated by the diagonal actions
\begin{align*}
		   \gamma(x_1,x_2,x_3,x_4) &:= (\xi x_1,x_2,x_3,\xi^{-1} x_4)\\
		   \delta(x_1,x_2,x_3,x_4) &:= (\xi x_1,\xi^{-1}x_2,\xi^{-1}x_3,\xi x_4)
\end{align*}
and the permutation
\[
		  \alpha (x_1,x_2,x_3,x_4) := (x_3,x_1,x_4,x_2).
\]
Notice that $\beta := \alpha^2$ is an involution. As an abstract group we have $\mathrm{Aut}(X) \simeq \mathbb{Z}/4 \ltimes (\Group)^2$. Indeed, it is a quotient of the group $\IZ/4\ltimes (\IZ/5)^4$, where $\alpha$ permutes the copies in $(\IZ/5)^4$, by the normal subgroup generated by $(\xi, \xi, \xi, \xi)$ and $(\xi, \xi^2, \xi^3 , \xi^4 )$. 

\section{The effective degree $1$ divisors on $X$}\xlabel{sDegree1}

In this section we classify degree $1$ curves on the Godeaux surface. This is equivalent to classifying reduced curves $C$ of degree $5$ on the Fermat quintic $\Fermat \subset \IP^3$.
 
If $C$ is reducible, then equivariance implies that $C$ must be the  union of $5$ lines that form an orbit under the operation of $\Group$. The lines on the Fermat surface are classified
by

\begin{thm}\xlabel{tLines}
There are exactly $75$ lines on $Y$. Let $\xi$ be a primitive fifth root of unity. 
The following are parameter forms for the lines:
\begin{align*}
l^- (i,j ) &= \{ [s:\: - \xi^i s :\: t :\: -\xi^j t] \, \mid\, [s:\: t]\in \mathbb{P}^1 \}\, ,\\
l^0 (i,j ) &= \{ [s:\: t :\: - \xi^j t :\: -\xi^i s ] \, \mid\, [s:\: t]\in \mathbb{P}^1 \}\, ,\\
l^+ (i,j ) &= \{ [s:\: t :\: - \xi^i s :\: -\xi^j t ] \, \mid\, [s:\: t]\in \mathbb{P}^1 \}.
\end{align*}
Here, $i,j\in\{ 0, \dots , 4\}$.
\end{thm}

\begin{proof}
By \cite[Prop.\ 3.3]{BS}, there are exactly $75$ such lines. The explicit form is taken from \cite[Sect.\ 3]{SSvL}.
\end{proof}
\smallskip

\begin{notation}
Representatives of the $\Group$-orbits of lines are given by $l^{+/0/-}(0,j)$ with $j\in \ZZ/5$. We denote the images of these lines on the Godeaux surface by $L^{+/0/-}_j$ and also call them ``lines". 
\end{notation}

\begin{prop}\xlabel{p:intersectionLine}
The intersection pairing on the $15$ lines on the Godeaux surface is
given by

\begin{center}
\begin{tabular}{c|ccc}
	$L^m_i. L^n_j$ & &  $m-n$ \\ 
		                        & 0 & $\pm1$ & $\pm2$ \\ \hline
	$i =j$		     & -3 & 1 & 5 \\
	$i\not=j$		     & 2 & 1 & 0 \\ 
\end{tabular} 
\end{center}
where we interpret the upper indices $+/0/-$ as $+1/0/-1$.
\end{prop}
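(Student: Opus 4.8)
The plan is to pull the whole computation back to the Fermat quintic $Y$ along the quotient map $p\colon Y\to X$. Since the $\Group$-action is fixed-point free (a fixed point would be a coordinate point, which does not lie on $Y$), $p$ is \'etale of degree $5$, so $p^{*}A\cdot p^{*}B = 5\,(A\cdot B)$ for any divisor classes $A,B$ on $X$, and $p^{*}L^{n}_{b}$ is the reduced sum of the five lines forming the $\Group$-orbit of the representative $l^{n}(0,b)$. Because intersection numbers on $Y$ are preserved by each automorphism $g\in\Group$, writing $(g\,l)\cdot(h\,l')=l\cdot(g^{-1}h\,l')$ and summing over the group collapses the factor $5$ and gives the clean identity
\[
(L^{m}_{a}\cdot L^{n}_{b})_{X}\;=\;l^{m}(0,a)\cdot p^{*}L^{n}_{b}\;=\;\sum_{g\in\Group}\bigl(l^{m}(0,a)\cdot g\cdot l^{n}(0,b)\bigr)_{Y}.
\]
Hence it suffices to intersect one fixed representative line with the entire orbit of the other.

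Next I would record the geometry of lines on $Y$. As $Y$ is a smooth quintic we have $K_{Y}=\ko_{Y}(1)$, so adjunction gives $l^{2}=-3$ for every line $l\subset Y$, while two \emph{distinct} lines in $\IP^{3}$ meet in at most one point, so $l\cdot l'\in\{0,1\}$. Therefore each term in the sum above is either the single self-intersection $-3$ (which occurs precisely when $l^{m}(0,a)$ itself lies in the orbit, i.e.\ when $m=n$ and $a=b$) or an incidence contributing $1$ or $0$ according to whether the two lines share a point. The proposition thus reduces to counting how many members of one orbit pass through a point of the fixed line.

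To perform the count I would first make the orbits explicit through the $\Group$-action on lines: substituting and reparametrising shows $\xi\cdot l^{+}(i,j)=l^{+}(i+2,j+2)$, $\xi\cdot l^{0}(i,j)=l^{0}(i+3,j+1)$ and $\xi\cdot l^{-}(i,j)=l^{-}(i+1,j+1)$, so the orbits of $l^{+}(0,b)$, $l^{0}(0,b)$, $l^{-}(0,b)$ are $\{l^{+}(i,i+b)\}_{i}$, $\{l^{0}(i,b+2i)\}_{i}$, $\{l^{-}(i,i+b)\}_{i}$, with the subscript serving as the orbit invariant. Equating the two parametrised points of a pair of lines then yields a small system of conditions $\xi^{(\cdots)}=\xi^{(\cdots)}$, each a single congruence mod $5$, which pins down exactly which orbit members meet the fixed line. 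Running this for $(L^{+}_{a},L^{+}_{b})$, $(L^{0}_{a},L^{0}_{b})$, $(L^{+}_{a},L^{0}_{b})$ and $(L^{+}_{a},L^{-}_{b})$ produces the three columns $-3/2$, $1/1$ and $5/0$; the automorphism $\alpha$ of Section~\ref{sAuto}, for which one checks $\alpha(l^{+}(i,j))=l^{-}(-i,-j)$ and which preserves the type $0$, matches the remaining type-combinations to these. There is no symmetry cyclically permuting the three types, since $\Aut(X)$ has order $100$ and hence no element of order $3$, which is why the pair $(L^{0}_{a},L^{0}_{b})$ must be treated on its own.

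The content lies entirely in the bookkeeping rather than in any single hard step: keeping the unique self-intersection term $-3$ separate from the genuine incidences, correctly discarding the trivial all-zero solution of each incidence system, and verifying the averaging identity that removes the factor $5$. Once these are in place, the three distinct column-values fall out of solving linear congruences mod $5$.
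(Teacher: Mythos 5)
Your proposal is correct, and every structural claim in it checks out: the quotient map is indeed \'etale of degree $5$ (the fixed points of any nontrivial power of the generator are the four coordinate points, none on $Y$), so $p^{*}L^{n}_{b}$ is the reduced orbit sum and your averaging identity $(L^{m}_{a}\cdot L^{n}_{b})_{X}=\sum_{g}(l^{m}(0,a)\cdot g\,l^{n}(0,b))_{Y}$ is just the projection formula; adjunction with $K_{Y}=\mathcal{O}_{Y}(1)$ gives $l^{2}=-3$; your formulas for the $\Group$-action on the three families and for $\alpha$ are right; and I spot-checked the congruence systems (e.g.\ $(+,+)$ with $a\neq b$ gives exactly the two incidences at $[1:0:-1:0]$ and $[0:1:0:-\xi^{a}]$, hence $2$; $(+,-)$ gives $5$ or $0$ according to $a=b$ or not; $(+,0)$ gives the single solution $i=2(a-b)$, hence $1$), reproducing the table.

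Your route differs from the paper's in execution rather than in strategy. The paper's proof is one line: it delegates the entire computation to a Macaulay2 script (which computes the ideals of the lines on the Fermat surface, the degrees of the intersection schemes, and divides by $5$), and invokes the actions of $\mathrm{Aut}(X)$ and $\mathrm{Gal}(\mathbb{Q}(\xi):\mathbb{Q})$ only to explain the symmetry of the resulting table. You replace the computer calculation by a self-contained hand argument: parametric incidence counting via congruences mod $5$, adjunction for the self-intersections, and the involution $\alpha$ to halve the case list. What the paper's approach buys is uniformity -- the identical script template handles Propositions \ref{p:intersectionElliptic} and \ref{p:intersectionEllipticLine}, where the curves are elliptic quintics given only by ideals and a hand computation would be impractical; what yours buys is a human-checkable proof that also explains where the numbers come from (e.g.\ the entry $5$ is forced by every member of one orbit meeting the fixed line, and $-3$ by the orbit of a line being pairwise disjoint). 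One small caveat: your parenthetical justification that no symmetry can permute the three types cyclically (no element of order $3$ in a group of order $100$) would need to also account for the Galois action the paper mentions; but since Galois acts on the parametrizations by $\xi\mapsto\xi^{r}$ and visibly preserves each of the three types, your decision to compute $(0,0)$ and $(+,0)$ separately is the right one and nothing is lost.
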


\begin{proof}
This can be done by direct calculation. A Macaulay2 script doing this can be found at \cite{BBS}. The symmetry is explained by the operation of $\autGodeaux$ and $\Gal(\IQ(\xi):\IQ)$.
\end{proof}

We now assume that $C \subset \Fermat \subset \IP^3$ is reduced and irreducible. 
If $C$ is degenerate, i.e. contained in a hyperplane, we obtain the $4$ invariant plane quintics cut out by $x_1, x_2, x_3$ and $x_4$.

If $C$ is reduced, irreducible and nondegenerate one can use

\begin{thm}[Gruson-Lazarsfeld-Peskine, \cite{GLP}]
Let $C$ be a nondegenerate reduced irreducible space curve of degree $d$. Then $C$ is $(d - 1)$-regular. Moreover if $d \ge 5$, $C$ is not $(d - 2)$-regular if and only if $C$ is a smooth rational curve with a $(d - 1)$-secant line. 
\end{thm}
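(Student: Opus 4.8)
The plan is to read this as a statement about the Castelnuovo--Mumford regularity of the ideal sheaf $\mathcal{I}_C \subset \mathcal{O}_{\mathbb{P}^3}$, where $m$-regularity means $H^i(\mathbb{P}^3, \mathcal{I}_C(m-i)) = 0$ for all $i \geq 1$, and to isolate the genuine cohomological content of the two assertions. First I would dispose of the boundary cohomology: $H^3(\mathcal{I}_C(m-3)) = 0$ automatically in the relevant range, while $H^2(\mathcal{I}_C(m-2)) \cong H^1(\mathcal{O}_C(m-2))$ vanishes by Serre duality on $C$ once $\deg\bigl(\omega_C(-(m-2)H)\bigr) < 0$; feeding in Castelnuovo's genus bound for nondegenerate space curves shows this already holds at $m = d-1$. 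Thus both statements collapse onto the single group $H^1(\mathcal{I}_C(k))$, i.e. the failure of the restriction map $H^0(\mathcal{O}_{\mathbb{P}^3}(k)) \longrightarrow H^0(\mathcal{O}_C(k))$ to be surjective, for $k = d-2$ (the regularity bound) and for $k = d-3$ (the extremal case).

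Second I would establish the clean direction of the characterization through the secant-line mechanism. If $\ell$ is an $e$-secant line, restriction to $\ell \cong \mathbb{P}^1$ via the sequence
\[
0 \longrightarrow \mathcal{I}_{C \cup \ell}(k) \longrightarrow \mathcal{I}_C(k) \longrightarrow \mathcal{I}_{(C \cap \ell)/\ell}(k) \longrightarrow 0,
\]
with $\mathcal{I}_{(C \cap \ell)/\ell}(k) \cong \mathcal{O}_{\mathbb{P}^1}(k-e)$, produces a nonzero class in $H^1(\mathcal{I}_C(k))$ as soon as $k \leq e-2$, once one checks it is not killed by $H^1(\mathcal{I}_{C \cup \ell}(k))$. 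Taking $e = d-1$ and $k = d-3$ shows that a $(d-1)$-secant line obstructs $(d-2)$-regularity. Conversely, such a line $\ell$ forces $C$ to be rational: projection from $\ell$ gives a degree-one map $C \dashrightarrow \mathbb{P}^1$, since a general plane through $\ell$ meets $C$ in the $d-1$ fixed points of $C \cap \ell$ together with a single moving point; a short additional argument (a singular point would produce an even longer secant or force degeneracy) upgrades rationality to smoothness.

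The real work, and the main obstacle, is the remaining direction of the regularity statement: for an irreducible nondegenerate $C$ one must show $H^1(\mathcal{I}_C(d-2)) = 0$ unconditionally, and that $H^1(\mathcal{I}_C(d-3)) \neq 0$ can only be caused by a $(d-1)$-secant line. Here I would follow the generic-projection strategy of Gruson--Lazarsfeld--Peskine: project $C$ birationally onto a plane curve $\bar{C} \subset \mathbb{P}^2$ of degree $d$ with only nodes, and control $\mathcal{I}_C$ through the conductor sheaf relating $\mathcal{O}_C$ to $\mathcal{O}_{\bar{C}}$, so that the deficiencies $h^1(\mathcal{I}_C(k))$ become a count governed by the nodes, hence by the multisecant lines of $C$. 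Equivalently one can slice by a general hyperplane, invoke Harris's uniform position principle to place the $d$ section points $\Gamma = C \cap H$ in general position, and then solve the resulting lifting problem for forms vanishing on $\Gamma$. In either incarnation the crux is a sharp estimate showing that below the critical degree the only configurations forcing nonvanishing $H^1$ are the aligned ones coming from a $(d-1)$-secant; pinning down this extremal geometry, and ruling out every other source of deficiency, is the delicate technical heart of the argument.
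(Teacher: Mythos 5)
The first thing to say is that the paper does not prove this theorem at all: it is quoted verbatim from Gruson--Lazarsfeld--Peskine \cite{GLP} and used as a black box in the classification of degree~$5$ curves on the Fermat quintic. So there is no internal argument to compare yours against; the only meaningful comparison is with the original proof in \cite{GLP}, and against that standard your proposal has a genuine gap.

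Your reduction of both assertions to the single group $H^1(\mathcal{I}_C(k))$ is routine and essentially correct (with the caveat that for an integral, possibly singular, curve one must work with the dualizing sheaf and the arithmetic genus when invoking Serre duality and Castelnuovo's bound). The secant-line direction can also be made to work, though your exact-sequence argument is slightly off: the obstruction to producing a nonzero class in $H^1(\mathcal{I}_C(k))$ from $H^1(\mathcal{O}_{\mathbb{P}^1}(k-e))$ is the connecting map into $H^2(\mathcal{I}_{C\cup\ell}(k))$, not classes ``killed by $H^1(\mathcal{I}_{C\cup\ell}(k))$''; a cleaner route is to note that $(d-2)$-regularity forces $\mathcal{I}_C(d-2)$ to be globally generated, and a line meeting $C$ in a length-$(d-1)$ scheme must then lie on every degree-$(d-2)$ surface through $C$, a contradiction. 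The real problem is the converse direction, which is the entire content of the theorem: that $H^1(\mathcal{I}_C(d-2))=0$ holds for \emph{every} integral nondegenerate space curve, and that failure of $H^1(\mathcal{I}_C(d-3))=0$ can only come from a $(d-1)$-secant line to a smooth rational curve. You name the Gruson--Lazarsfeld--Peskine strategy (generic projection to $\mathbb{P}^2$, the conductor, or alternatively uniform position plus a lifting problem) and then explicitly defer ``the delicate technical heart''. Naming the strategy is not executing it: the actual proof requires constructing a vector-bundle (Eagon--Northcott-type) resolution of $\mathcal{I}_C$ out of the plane projection and a careful cohomological analysis of it, and the uniform position principle by itself is known to be insufficient (and needs separate justification for singular integral curves). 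As it stands, your proposal is an annotated statement of the theorem together with a correct attribution of where the difficulty lies, not a proof of it.
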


If we exclude the second case for the moment, so assume that $C$ is $3$-regular,  we have

\begin{prop}
Let $C \subset \IP^3$ be an irreducible reduced space curve of degree $5$ and regularity $3$ that does not
lie on a quadric. Then $C$ is an elliptic curve.
\end{prop}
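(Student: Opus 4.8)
The plan is to read off the arithmetic genus of $C$ from the cohomology of its ideal sheaf and to check that $p_a(C)=1$; the two hypotheses — regularity $3$ and the absence of a quadric through $C$ — are exactly what is needed to pin down this single invariant. The tool is the ideal sheaf sequence twisted by $2$,
\[
0 \mor \ki_C(2) \mor \ko_{\IP^3}(2) \mor \ko_C(2) \mor 0 ,
\]
whose long exact cohomology sequence I would combine with the standard vanishing $H^1(\ko_{\IP^3}(2))=H^2(\ko_{\IP^3}(2))=0$ on $\IP^3$.

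First I would extract the relevant vanishings. Since $C$ is $3$-regular one has $H^1(\ki_C(n))=0$ for $n\geq 2$ and $H^2(\ki_C(n))=0$ for $n\geq 1$, so in particular $H^1(\ki_C(2))=H^2(\ki_C(2))=0$; the hypothesis that $C$ lies on no quadric is precisely $H^0(\ki_C(2))=0$. Feeding these into the long exact sequence forces the restriction map $H^0(\ko_{\IP^3}(2))\to H^0(\ko_C(2))$ to be an isomorphism and kills $H^1(\ko_C(2))$, so that $h^0(\ko_C(2))=h^0(\ko_{\IP^3}(2))=10$ and $H^1(\ko_C(2))=0$. Hence $\chi(\ko_C(2))=10$. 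Comparing with the Hilbert polynomial $\chi(\ko_C(n))=5n+\chi(\ko_C)$ of a degree-$5$ curve gives $10+\chi(\ko_C)=10$, i.e. $\chi(\ko_C)=0$ and therefore $p_a(C)=1-\chi(\ko_C)=1$. It is worth noting that both hypotheses are genuinely used: regularity $3$ alone only forces $p_a\in\{1,2\}$, and a degree-$5$, regularity-$3$ curve of arithmetic genus $2$ lies on exactly one quadric, so the no-quadric assumption is what rules out $p_a=2$.

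The main obstacle I anticipate is the passage from $p_a(C)=1$ to the assertion that $C$ is a smooth, hence genuinely elliptic, curve, because the cohomological input above detects only the arithmetic genus: a reduced irreducible curve with $p_a=1$ could a priori be a rational quintic with a single node or cusp, whose ideal has the same Hilbert function and the same regularity. Note that the Gruson--Lazarsfeld--Peskine exceptional curve (smooth rational with a $4$-secant) has regularity $4$ and is therefore already excluded, but this does not address the singular $p_a=1$ curves. I would settle smoothness either by appealing to the explicit equations produced in the forthcoming classification of these quintics, where smoothness is verified directly, or by a separate geometric argument excluding rational quintics of degree $5$ carrying an isolated node or cusp; in the application at hand the former is the natural route, since the curves in question are obtained explicitly and are checked to be smooth.
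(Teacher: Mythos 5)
Your computation is correct and lands exactly where the paper's proof lands, namely at $\chi(\ko_C)=0$, i.e.\ $p_a(C)=1$, but by a genuinely different route. The paper uses Green's theory of generic initial ideals: it attaches to $C$ a triangular diagram, translates degree, arithmetic genus, Castelnuovo--Mumford regularity and the minimal degree of a hypersurface through $C$ into combinatorial constraints on that diagram, and then enumerates all admissible diagrams -- the hypotheses force a unique one, whose genus count gives $g_a=1$. You replace this machinery with the long exact sequence of $0\to\ki_C(2)\to\ko_{\IP^3}(2)\to\ko_C(2)\to 0$: $3$-regularity gives $H^1(\ki_C(2))=H^2(\ki_C(2))=0$, the no-quadric hypothesis gives $H^0(\ki_C(2))=0$, whence $\chi(\ko_C(2))=10$ and $\chi(\ko_C)=0$. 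Your argument is shorter and entirely self-contained, at the cost of extracting only the Euler characteristic; Green's diagram pins down the whole Hilbert function at once, and in particular encodes your side remark that $p_a=2$ would force exactly one quadric (that is the alternative $f(0,2)=0$ in the paper's enumeration). Both proofs use the two hypotheses in precisely parallel ways, so this is a legitimate alternative proof of the same fact.

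Your caution about smoothness is well placed, and it is worth recording that it applies verbatim to the paper's own proof: the diagram argument also only computes the arithmetic genus. Indeed, no argument from the stated hypotheses can do better, since an integral rational quintic with a single node or cusp spanning $\IP^3$ actually satisfies them: by Gruson--Lazarsfeld--Peskine \cite{GLP} it is $3$-regular (it is not a smooth rational curve with a $4$-secant), and then your own computation gives $h^0(\ki_C(2))=p_a-1=0$, so it lies on no quadric. So in the proposition ``elliptic curve'' must be read as ``curve of arithmetic genus $1$'', and genuine smoothness is settled only later, exactly along the route you propose as primary: Reid's explicit parametrization (Theorem \ref{t:ellipticReid}) comes with an explicit nonsingularity criterion, which is what is verified for the $50$ invariant curves on the Fermat surface. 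Your suggested alternative of a ``separate geometric argument excluding'' the singular $p_a=1$ quintics in general cannot work, for the reason just given; only the deferral to the explicit classification is viable.
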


\begin{proof}
This proposition is a well known fact, but we will sketch the proof for the convenience of the reader. We will use a combinatorial tool developed by Green \cite{Green}, especially section 4. By considering generic initial ideals, Green associates to each irreducible space curve a function
\[
	f_C : \IN \times \IN \to \IN \cup \{\infty\}
\]
that he depicts graphically by writing $f_C(i,j)$ at position $(i,j)$ in a triangular diagram:
\[
	\begin{matrix}
	   &&   && f(0,0) &&   && \\
	   &   && f(0,1) && f(1,0) &&   & \\
	   && f(0,2) && f(1,1) && f(2,0) && \\
	   & f(0,3) && f(1,2) && f(2,1) && f(3,0)& \\
	   &&   && \dots &&   && \\
	\end{matrix}
\]
For example:
\[
	\begin{matrix}
	   &&   && \infty &&   && \\
	   &   && \infty && \infty &&   & \\
	   && 1 && \infty && \infty && \\
	   & 0 && 0 && 0 &&  0& \\
	   &&   && \dots &&   && \\
	\end{matrix}
\]
Sometimes one  replaces $\infty$ by circles and $0$ by crosses.

If such a diagram comes from an irreducible reduced space curve it satisfies
a number of conditions:
\begin{enumerate}
\item The numbers are weakly decreasing from top to bottom and from right to left.
\item The number $d$ of $\infty$'s is equal to the degree of the curve, i.e. $d=5$ in the example above.
\item Replace an $\infty$ at $(i,j)$ by $i+j-1$ and replace a number $f(i,j) \not=\infty$ by $-f(i,j)$. Then the sum of the entries of the diagram plus 1 is the arithmetic genus $g_a$ of the curve. The example above gives
\[
	\begin{matrix}
	   &&   && -1 &&   && \\
	   &   && 0 && 0 &&   & \\
	   && -1 && 1 && 1 && \\
	   & 0 && 0 && 0 &&  0& \\
	   &&   && \dots &&   && \\
	\end{matrix} \quad
\]
so the arithmetic genus of a curve with this diagram is $g_a = 1$.
\item Let $r$ be the number of rows that contain non zero entries and replace the numbers $\not = 0,\infty$ at $(i,j)$ by $f(i,j)+i+j$ and $\infty$ by $0$.  Then the maximum of all entries and $r$ is the Castelnuovo-Mumford regularity of the curve. The example above gives
\[
	\begin{matrix}
	   &&   && 0 &&   && \\
	   &   && 0 && 0 &&   & \\
	   && 3 && 0 && 0 && \\
	   & 0 && 0 && 0 &&  0& \\
	   &&   && \dots &&   && \\
	\end{matrix} \quad
\]
and has therefore regularity $3$.
\item Let $r'$ be the smallest number such that $r' = i+j$ and $f(i,j) = 0$. Then $r'$ is also the smallest degree of a hypersurface that contains $C$. The curve in the example above 
therefore does not lie on a quadric.
\end{enumerate}

There are more conditions that a diagram of an irreducible reduced space curve must satisfy, but we do not need them here.

Let us now enumerate all diagrams that an irreducible reduced space curve of degree $5$ with regularity $3$ can have if it does not lie on a quadric. First of all we have five $\infty$'s. Because $C$ has regularity $3$, there are at most $3$ rows that contain $\infty$'s. Since the entries decrease from top to bottom and from right to left, the only possible way to arrange $\infty$'s is as in the example above. The regularity $3$ also implies that the $4$th row contains only $0$'s. This leaves the entry at $(0,2)$. If $f(0,2)=0$, then $C$ lies on a quadric. If $f(0,2) = 1$, we have the example above. If $f(0,2) \ge 2$, we have that 
the Castelnuovo-Mumford regularity is $\ge 2+0+2 = 4$. So this leaves only the example above.
\end{proof}

We can sum up our discussion so far in

\begin{thm}
A nondegenerate reduced, irreducible space curve $C \subset \IP^3$ of degree $5$ that does not lie on
a quadric is either an elliptic quintic curve  or a smooth rational curve of degree $5$ with at least one $4$-secant.\qqed
\end{thm}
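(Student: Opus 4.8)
The plan is to derive the statement as a straightforward case analysis built on the two preceding results—the theorem of Gruson--Lazarsfeld--Peskine and the Proposition classifying irreducible space quintics of regularity $3$ that do not lie on a quadric—with the case split governed by the Castelnuovo--Mumford regularity of $C$.

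First I would apply the Gruson--Lazarsfeld--Peskine theorem with $d = 5$. Since $C$ is nondegenerate, reduced and irreducible, it is automatically $(d-1) = 4$-regular, and the sharp form of the theorem tells us exactly when it fails to be $(d-2) = 3$-regular: this happens if and only if $C$ is a smooth rational curve carrying a $(d-1) = 4$-secant line. This biconditional produces two mutually exclusive and exhaustive cases, according to whether $C$ is $3$-regular or not.

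In the case where $C$ is $3$-regular, the hypothesis of the theorem we are proving guarantees that $C$ does not lie on a quadric, so $C$ satisfies precisely the assumptions of the Proposition above; I would invoke it to conclude that $C$ is an elliptic curve, and since $\deg C = 5$ this is an elliptic quintic, which is the first alternative. In the complementary case where $C$ is not $3$-regular, the Gruson--Lazarsfeld--Peskine theorem already yields the conclusion verbatim, namely that $C$ is a smooth rational curve of degree $5$ with at least one $4$-secant line, the second alternative. Since these two cases cover all possibilities, this finishes the argument.

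There is no substantive analytic obstacle here: the proof is pure bookkeeping of the cited inputs, and the only points requiring care are to check that the dichotomy on regularity is genuinely exhaustive (which is immediate from $4$-regularity together with the GLP biconditional) and to match vocabulary, identifying the degree-$5$ elliptic curve furnished by the Proposition with the elliptic quintic curve named in the statement. Consequently I expect the verification to be short, with the real content residing entirely in the earlier Proposition and in the cited theorem. \qqed
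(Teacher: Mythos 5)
Your proposal is correct and matches the paper's own argument exactly: the theorem is stated there as a summary (``We can sum up our discussion so far''), obtained by the same dichotomy on $3$-regularity via Gruson--Lazarsfeld--Peskine, with the $3$-regular case handled by the preceding Proposition on regularity-$3$ quintics not lying on a quadric.
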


\begin{prop} \label{pRationalInvariant}
Let $C \subset \IP^3$ be a $\Group$-invariant 
smooth rational curve of degree $5$. Then, after a coordinate change, we have
\begin{align*}
	\phi \colon \IP^1 &\to \IP^3 \\
	(s:t) &\mapsto (\phi_1:\phi_2:\phi_3:\phi_4)
\end{align*}
with $\phi_i = as^5+bt^5$ with $a,b \not=0$ for one $i \in \{1,2,3,4\}$ and $\phi_j = c_jm_j$
with $m_j \in {st^4,s^2t^3,s^3t^2,s^4t}$ of weight $j-i$ for $j\not=i$.
\end{prop}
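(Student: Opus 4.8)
The plan is to combine equivariance of a parametrization with the degree constraint. Since $C$ is a smooth rational quintic, it is the image of an embedding $\phi\colon \IP^1 \to \IP^3$, $(s:t)\mapsto (\phi_1:\phi_2:\phi_3:\phi_4)$, where the $\phi_k \in H^0(\IP^1,\ko(5))$ are binary forms of degree $5$; as $\phi$ is a morphism onto a curve of degree $5$, the linear system it defines is base-point free, so the $\phi_k$ have no common factor. First I would analyse the induced action on $C$. The fixed locus of $\Group$ on $\IP^3$ is the set of four coordinate points $e_1,\dots,e_4$, since the weights $1,2,3,4$ are distinct modulo $5$. As $C$ is not contained in a finite set, the restriction of a generator $g\in\Group$ to $C\cong\IP^1$ is a nontrivial automorphism; being of prime order $5$, it is faithful. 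After diagonalizing it I may choose coordinates $(s:t)$ on $C$ so that $g$ acts by $(s:t)\mapsto (s:\zeta t)$ for some primitive fifth root of unity $\zeta = \xi^c$.

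Next I would write out equivariance of $\phi$. Comparing $\phi(s:\zeta t)$ with $g\cdot\phi(s:t)$ as points of $\IP^3$ yields a single common scalar, so that $\phi_k(s,\zeta t) = \mu\,\xi^k\,\phi_k(s,t)$ for all $k$ with one constant $\mu\in\IC^*$ (it is a constant by comparing degrees). Expanding $\phi_k = \sum_{j=0}^{5} a_{k,j}\, s^{5-j}t^j$ turns this into $a_{k,j}(\zeta^j - \mu\xi^k)=0$, so each nonzero $\phi_k$ is supported on the monomials with $\zeta^j = \mu\xi^k$, that is, on a single residue class $j\equiv j_k \pmod 5$. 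In particular $\mu$ is a fifth root of unity, say $\mu=\xi^m$, and as $k$ runs over $1,2,3,4$ the residues $j_k \equiv c^{-1}(m+k) \pmod 5$ take four distinct values in $\IZ/5$; exactly one residue is missing, the one attached to $k\equiv 0$.

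The key dichotomy is then whether the missing residue is $0$. If $0$ were missing, every $\phi_k$ would be a single monomial $s^{5-j_k}t^{j_k}$ with $j_k\in\{1,2,3,4\}$, and the four would share the factor $st$, contradicting base-point freeness (the genuine degree of $\phi$ would drop below $5$). Hence some $j_i=0$ for a unique index $i$, so that $\phi_i = a s^5 + b t^5$, while $\phi_j = c_j\, s^{5-e_j}t^{e_j}$ is a single monomial with $e_j\in\{1,2,3,4\}$ for $j\neq i$. The same common-factor argument forces $a,b\neq 0$: if $a=0$ (resp.\ $b=0$) all components would share the factor $t$ (resp.\ $s$). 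Finally, from $\zeta^{0}=\mu\xi^i$ I get $\mu = \xi^{-i}$, so $g$ scales $m_j = s^{5-e_j}t^{e_j}$ by $\zeta^{e_j} = \mu\xi^{j} = \xi^{j-i}$; that is, $m_j$ is an eigenvector of weight $j-i$, as claimed.

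I expect the main obstacle to be the bookkeeping around the common scalar $\mu$ in the equivariance relation, namely justifying that it is a genuine constant and a fifth root of unity, and, above all, the two applications of base-point freeness. These are exactly what upgrade the purely representation-theoretic constraint (each $\phi_k$ lies in a single weight space) to the sharp conclusion that precisely one coordinate is the binomial $as^5+bt^5$ with both coefficients nonzero, the other three being single monomials of the predicted weights. The diagonalization of the order-$5$ action and the weight computation are then routine once the parametrization has been set up equivariantly.
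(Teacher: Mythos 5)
Your proof is correct and follows essentially the same route as the paper's: both linearize the $\Group$-action on $(s:t)$, deduce that each $\phi_k$ lies in a single eigenspace of the action on binary quintics (where the weight-$0$ space is $\langle s^5,t^5\rangle$ and every nonzero weight space is a single monomial divisible by $st$), and then use the no-common-factor/degree-drop argument to force exactly one coordinate to be $as^5+bt^5$ with $a,b\neq 0$. Your explicit bookkeeping with the scalar $\mu$ and residue classes mod $5$ is just a concrete rendering of the paper's character/weight argument (including the point the paper phrases as ``$s$ and $t$ cannot have the same weight''), so there is nothing substantive to add.
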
 	

\begin{proof}
Since $C \cong \IP^1$ is $\Group$-invariant, $\Group$ also acts on $H^0(\ko_C(1))$; note that $\Group$ must act via projective automorphisms and the Schur multiplier of $\Group$ is trivial resp.\ it is easy to see in an elementary way that this action comes from a linear one. Let $s$
and $t$ be the characters of this action. Since $\phi$ is covariant, the $\phi_i$ span eigenlines for 
the $\Group$-action on degree $5$ polynomials in $s$ and $t$. Then the $\phi_i$ are linear combinations of monomials of the same weight. Since the weights of the coordinates of $\IP^3$ are $(1,2,3,4)$, the weight of $\phi_i$ has to be $k+i$ for some $k \in \Group$.
Therefore, it is not possible that $s$ and $t$ have the same weight. If the weights differ,
$s^5$ and $t^5$ have weight $0$ and the other weights occur for one monomial each.
Denote these monomials by $m_1,\dots,m_4$ and set $m_0 = as^5+bt^5$. Now $m_0$ has to occur as one of the $\phi_i$, since otherwise $st$ is a common divisor of all $\phi_i$ and the image of $\phi$ is a rational curve of degree $3$. The other $\phi_j$ must then be multiples of the remaining monomials. Also $a,b \not=0$, since otherwise either $t$ or $s$ is a common divisor of all $\phi_i$.
\end{proof}
 
\begin{cor}
There are no $\Group$-invariant rational curves of degree $5$ on the Fermat quintic $\Fermat$.
\end{cor}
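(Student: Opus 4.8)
The plan is to argue by contradiction and feed the rigid parametrization of Proposition~\ref{pRationalInvariant} into the Fermat equation. Suppose $C \subset \Fermat$ is a $\Group$-invariant smooth rational curve of degree $5$. By Proposition~\ref{pRationalInvariant} it admits a parametrization $\phi = (\phi_1, \dots, \phi_4)$ for which, for exactly one index $i$, we have $\phi_i = as^5 + bt^5$ with $a, b \neq 0$, while $\phi_j = c_j m_j$ for every $j \neq i$, where each $m_j$ lies in $\{ st^4, s^2t^3, s^3t^2, s^4t \}$. The requirement that the image of $\phi$ lie on $\Fermat$ is precisely that the pullback of the defining quintic vanish identically, i.e.\ $\phi_1^5 + \phi_2^5 + \phi_3^5 + \phi_4^5 = 0$ as a polynomial in $s$ and $t$.

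The key step is then to read off a single coefficient from this identity, namely that of the monomial $s^{25}$. Each $m_j$ is divisible by $st$, so $\phi_j^5 = c_j^5 m_j^5$ is divisible by $(st)^5 = s^5 t^5$ and in particular contributes no $s^{25}$-term. Hence the only contribution to $s^{25}$ comes from $\phi_i^5 = (as^5 + bt^5)^5$, whose expansion supplies exactly $a^5 s^{25}$. Thus the coefficient of $s^{25}$ in $\phi_1^5 + \phi_2^5 + \phi_3^5 + \phi_4^5$ equals $a^5$, and its vanishing forces $a = 0$; symmetrically, inspecting the coefficient of $t^{25}$ forces $b = 0$. Either conclusion contradicts $a, b \neq 0$ from Proposition~\ref{pRationalInvariant}, so no such curve $C$ can exist.

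Since everything reduces to inspecting one coefficient, there is essentially no obstacle once the explicit form of $\phi$ is in hand. The only point worth stating carefully is why the three ``short'' coordinates $\phi_j$ cannot generate a pure power $s^{25}$ (or $t^{25}$): this is immediate from the divisibility of each $m_j$ by $st$, which is exactly the structural input provided by the weight analysis in the preceding proposition.
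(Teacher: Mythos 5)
Your proof is correct and follows exactly the paper's argument: both substitute the normal form of Proposition~\ref{pRationalInvariant} into the Fermat equation and observe that the monomials $s^{25}$ and $t^{25}$ can only arise from $\phi_i^5 = (as^5+bt^5)^5$, forcing $a^5 = b^5 = 0$ and contradicting $a,b \neq 0$. Your remark that each $m_j$ is divisible by $st$ just makes explicit the one-line assertion in the paper that these monomials occur in none of the other $\phi_j^5$.
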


\begin{proof}
If $\phi \colon \IP^1 \to C \subset \Fermat \subset \IP^3$ was invariant of degree $5$, we would have
\[
	\phi_1^5 + \phi_2^5 + \phi_3^5 + \phi_4^5 = 0 \in \IC[s,t]
\]
with $\phi_j$ as in Proposition \ref{pRationalInvariant}. In particular, 
\[
	\phi_i^5 = (as^5+bt^5)^5 = a^5s^{25} + \dots + b^5t^{25}
\]
with $a,b \not=0$.
The monomials $s^{25}$ and $t^{25}$ occur in none of the other $\phi_j^5$, $j\not=i$. Therefore, $\phi$ can never satisfy the Fermat equation.
\end{proof}

\begin{prop}
There are no non-degenerate irreducible reduced curves $C \subset \Fermat \subset \IP^3$ of degree $5$ that also lie on a quadric $Q$. 
\end{prop}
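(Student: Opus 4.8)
The plan is to first pin down the quadric and the numerical type of $C$, and then to reduce the statement to an explicit polynomial (non)solvability that can be certified by a Gr\"obner basis computation.

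First I would reduce the shape of $Q$. Since $C$ is irreducible and non-degenerate it cannot lie in a plane, so $Q$ cannot be a pair of planes or a double plane; hence $Q$ is irreducible, i.e.\ either a smooth quadric $Q\cong\IP^1\times\IP^1$ or a quadric cone. In the smooth case $C$ has a bidegree $(a,b)$ with $a+b=\deg C=5$, and irreducibility forces $a,b\ge 1$; since the arithmetic genus of a curve of class $(a,b)$ on $\IP^1\times\IP^1$ is $(a-1)(b-1)$, the only possibilities (up to interchanging the two rulings) are $(a,b)=(1,4)$ with $p_a=0$ and $(a,b)=(2,3)$ with $p_a=2$. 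Pulling the cone back to the Hirzebruch surface $\IF_2$ and running the same computation with the class $a\sigma+bf$ (where $\sigma$ is the $(-2)$-section and $f$ a ruling, so that the $\IP^3$-degree equals $b$) gives again only $p_a\in\{0,2\}$, the reducible and vertex cases being excluded. In particular no elliptic quintic lies on a quadric, which is consistent with the fifty invariant elliptic quintics on $\Fermat$ all occurring in the complementary off-quadric case.

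Next I would turn $C\subset\Fermat$ into a system of equations. In the genus $2$ case $L:=\mathcal{O}_C(1)$ has degree $5=2g+1$, hence is very ample with $h^0(L)=4$; so the four coordinate functions are a basis of $H^0(L)$ and $C$ is embedded by the complete system $|L|$. Equivalently $C$ is linked to a line via a complete intersection of a quadric and a cubic (the liaison computation gives residual degree $1$ and $p_a=0$), which yields a concrete parametrization of all candidate curves by the data (line, quadric, cubic). The condition $C\subset\Fermat$ is then the single identity $\sum_{i=1}^4 s_i^5=0$ in $H^0(L^{\otimes 5})$, where the $s_i$ are the coordinate sections. In the rational case $C$ is the graph of a degree $4$ map $\IP^1\to\IP^1$ on $Q$, so it is parametrized by four explicit quintics $\phi_i$ in $(s,t)$, and $C\subset\Fermat$ reads $\sum_i\phi_i^5=0$ in $\IC[s,t]$; this is exactly the type of identity handled for invariant curves in the Corollary to Proposition \ref{pRationalInvariant}, now without the invariance hypothesis.

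Finally I would rule the systems out. A dimension heuristic already makes non-existence plausible: genus $2$ quintics form a $20$-dimensional family in $\IP^3$ and the condition $\sum s_i^5=0$ imposes $\dim H^0(L^{\otimes 5})=24$ conditions, while the $(1,4)$ rational quintics form an $18$-dimensional family against $26$ conditions, so in both cases the expected dimension is negative. To make this rigorous I would feed the coefficients above (those of the line, quadric and cubic, respectively those of the parametrizing map and the quadric), together with the Fermat relation, into an elimination/Gr\"obner basis computation and check that the resulting ideal is the unit ideal, i.e.\ that no such curve exists; the cone case is treated identically using the $\IF_2$-coordinates. The main obstacle is the genus $2$ case: the abstract classification genuinely allows genus $2$ quintics on a quadric and there is no visible geometric obstruction to their lying on $\Fermat$, so the exclusion really rests on the specific Fermat equation and is precisely where a genuine Gr\"obner basis computation (as flagged in the introduction) is required; some care is needed to certify emptiness over all admissible quadrics rather than merely generically.
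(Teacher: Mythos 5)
There is a genuine gap, and it is twofold. First, your proposal's conclusion is entirely outsourced to elimination/Gr\"obner computations that are proposed but never performed: ``check that the resulting ideal is the unit ideal'' is an experiment with an asserted outcome, not a proof, and you concede yourself that in the genus $2$ case ``there is no visible geometric obstruction.'' Your dimension heuristics carry no weight here precisely because the Fermat quintic is the paradigm of a special surface on which negatively-expected-dimensional families of curves do exist (it carries $75$ lines and $50$ invariant elliptic quintics, both of which a naive count would also rule out). Feasibility is itself doubtful: you cannot normalize the quadric to $x_1x_4=x_2x_3$ without destroying the Fermat equation, so the elimination must range over the $9$-dimensional space of quadrics together with the liaison or parametrization data, against degree $25$ identities. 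Moreover, in the cone case your phrase ``the reducible and vertex cases being excluded'' is unjustified: on a quadric cone \emph{every} quintic passes through the vertex, since the strict transform $D=a\sigma+5f$ on $\IF_2$ satisfies $D\cdot\sigma = 5-2a \neq 0$ for all integers $a$; without further input the vertex may perfectly well lie on $\Fermat$, and nothing in your plan addresses this.

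Second, and this is the missing idea: the proposition carries an implicit standing hypothesis from Section \ref{sDegree1} --- $C$ is $\Group$-invariant, being the preimage of a degree $1$ divisor on the Godeaux surface --- and the paper's proof uses this essentially, reducing everything to a short divisibility argument with no serious computation. On a smooth quadric one writes $\Fermat\cap Q = C + C'$ with $C$ of type $(1,4)$ or $(2,3)$, so that $C\cdot C' = 17$ or $13$; but $C\cap C'$ is a $\Group$-invariant finite subscheme of $\Fermat$, on which $\Group$ acts freely (the fixed points of the action on $\IP^3$ are the coordinate points, none of which lies on $\Fermat$), hence its length must be divisible by $5$ --- contradiction. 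For the cone, invariance forces $Q$ to be an eigenform $\lambda x_i^2+\mu x_jx_k$ with $2i=j+k$, whose vertex is a coordinate point and therefore off $\Fermat$; combined with $D\cdot\sigma=5-2a\neq 0$ this kills the cone case, and rank $\leq 2$ quadrics only yield degenerate curves. So the paper proves the intended equivariant statement cleanly, while your route attempts the strictly stronger non-equivariant statement, whose truth your proposal leaves unverified --- and which the divisibility argument cannot reach, since $17$ and $13$ are obstructions only against a free $\Group$-action.
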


\begin{proof}
Let $D = \Fermat \cap Q$ be the complete intersection of the Fermat quintic with $Q$. If $C$ is a curve as above, $C$ is a reduced component of $D$.

If $Q$ is smooth, we have $Q \cong \IP^1 \times \IP^1$. In this case $D$ is a divisor of type $(5,5)$. Since $C$ has degree $5$, it can have types $(0,5)$, $(1,4)$ or $(2,3)$ (after possibly exchanging the two factors of $\IP^1 \times \IP^1$). A reduced divisor of type $(0,5)$ is the union of $5$ skew lines and therefore never irreducible. If $C$ has type $(1,4)$, then $C' = D-C$ has type $(4,1)$. We have $C.C' = 1^2 + 4^2 = 17$. Since $C$ is $\Group$-invariant, the scheme of intersection points is also $\Group$-invariant. In particular, its degree must be divisible by $5$ which is not the case. If $C$ has type $(2,3)$, then $C'$ has type $(3,2)$ and $C.C' = 2^2 + 3^2 = 13$ which is also not divisible by $5$. So there are no curves as in the proposition on a smooth quadric.

\newcommand{\Qtilde}{\widetilde{Q}}

If $Q$ is a quadric cone, it is defined by $\lambda x_i^2 + \mu x_jx_k$ with $2i=j+k \in \Group$. Therefore, the singular locus of $Q$ is a coordinate point and as such does not lie on the Fermat surface. Let $\Qtilde$ be the blowup of $Q$ in the singular point. Then $\Qtilde$ is a rational ruled surface. We use the notation of \cite[Sect.\ V.2]{Hartshorne}. Let $C_0$ be the exceptional divisor of the blowup. We have $C_0^2 = -2 =: -e$ (\cite[Ex.\ V.2.11.4]{Hartshorne}). Let, furthermore, $F$ be the strict transform of a line on $Q$. By \cite[Prop.\ V.2.3]{Hartshorne}, the Picard group of $\Qtilde$ is generated by $C_0$ and $F$ with $C_0.F = 1$ and $F^2=0$. The strict transform of a hyperplane section of $Q$ is
$C_1 = C_0+2F$. Let now $D = aC_0+bF$ be the strict transform of an invariant quintic curve $C$. Then 
\[
	5 = D.C_1 = (aC_0 + bF).(C_0+2F) = -2a + b + 2a = b.
\]
Now $C_0.D = C_0.(aC_0+5F) = -2a + 5$ is always nonzero, and $C$ must pass through the singular locus of $Q$. Therefore, $C$ cannot lie on the Fermat surface.

If $Q$ has rank $2$, then $D$ is the union of two plane quintics, which are degenerate.

If $Q$ has rank $1$, then $C$ must also be degenerate.
\end{proof}

Invariant elliptic curves of degree $5$ have been classified by Reid:

\begin{thm}[\cite{Reid}] \label{t:ellipticReid}
Let $E \subset \IP^3$ be a $\Group$-invariant elliptic quintic curve not containing any coordinate points. Then
\begin{itemize}
\item the homogeneous ideal of E is generated by $5$ cubics of the form
\begin{align*}
R_0 &= a {x}_{1}^{2} {x}_{3}-b {x}_{1} {x}_{2}^{2}+c {x}_{3}^{2} {x}_{4}-d {x}_{2} {x}_{4}^{2} \\
R_1 &= a s {x}_{1} {x}_{2} {x}_{3}-a t {x}_{1}^{2} {x}_{4}-b s {x}_{2}^{3}-c t {x}_{3} {x}_{4}^{2}\\
R_2 &= a s {x}_{1} {x}_{3}^{2}-b s {x}_{2}^{2} {x}_{3}-b t {x}_{1} {x}_{2} {x}_{4}-d t {x}_{4}^{3}\\
R_3 &= a t {x}_{1}^{3}+c s {x}_{2} {x}_{3}^{2}+c t {x}_{1} {x}_{3} {x}_{4}-d s {x}_{2}^{2} {x}_{4}\\
R_4 &= b t {x}_{1}^{2} {x}_{2}+c s {x}_{3}^{3}-d s {x}_{2} {x}_{3} {x}_{4}+d t {x}_{1} {x}_{4}^{2},
\end{align*}
where $a,b,c,d$ are nonzero constants and $(s:t) \in \IP^1$. For $E$ to be nonsingular, we must have $\frac{tbc}{sad} \not\in \left\{0,\infty,\frac{-11 \pm 5\sqrt{5}}{2} = \left( \frac{-1 \pm \sqrt{5}}{2}\right)^5 \right\}$. The set of all $E$ is parametrised $1$-to-$1$ by $(s:t) \in \IP^1$ and the ratio $(a:b:c:d) \in \IP^3$.
\item The vector space of $\Group$-invariant quintic forms vanishing on $E$ has a basis consisting of the 7 elements
\[
	x_1^2R_3, 
	x_2^2R_1,
	x_3^2R_4,
	x_4^2R_2,
	x_1x_4R_0,
	x_2x_3R_0,
	x_3x_4R_3.
\]
\end{itemize}
\end{thm}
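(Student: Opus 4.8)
The plan is to exploit the $\Group$-equivariance throughout and reduce the classification to a representation-theoretic skeleton, filling in the explicit coefficients by a finite computation at the end. The first observation is that $\Group$ acts on $E$ by a fixed-point-free automorphism: the only fixed points of the diagonal action $\mathrm{diag}(\xi,\xi^2,\xi^3,\xi^4)$ on $\IP^3$ are the four coordinate points, and by hypothesis $E$ contains none of them. A fixed-point-free automorphism of an elliptic curve must be a translation by a point $\tau$, and since it has order $5$ we get $\tau \in E[5]\setminus\{0\}$. The restriction $\ko_E(1)$ of $\ko_{\IP^3}(1)$ carries the linearization induced from the linear action on $\IC^4$, so for every $n\ge 1$ the restriction map $S_n = H^0(\IP^3,\ko(n)) \longrightarrow H^0(E,\ko_E(n))$ is $\Group$-equivariant, where $S=\IC[x_1,\dots,x_4]$.

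The key point is the $\Group$-character of $H^0(E,\ko_E(n))$. Since the generator acts as the translation $t_\tau$ with $\tau\ne 0$, each nonzero power $t_{k\tau}$ is again fixed-point-free; as $\deg \ko_E(n)=5n>0$ forces $H^1=0$, the holomorphic Lefschetz fixed-point formula gives $\tr\big(t_{k\tau}\mid H^0(E,\ko_E(n))\big)=0$ for $k=1,2,3,4$. Hence $H^0(E,\ko_E(n))$ is $n$ copies of the regular representation of $\Group$. The same roots-of-unity computation shows that $S_3$ is four copies of the regular representation (equivalently, exactly four degree-$3$ monomials occur in each of the five weights mod $5$). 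By the regularity analysis preceding the theorem, a smooth elliptic quintic in $\IP^3$ is $3$-regular and lies on no quadric; thus $I_E$ is generated in degree $3$, $H^1(\mathcal{I}_E(3))=0$, and the restriction map in degree $3$ is surjective. Comparing characters in the short exact sequence
\[
0 \longrightarrow H^0(\mathcal{I}_E(3)) \longrightarrow S_3 \longrightarrow H^0(E,\ko_E(3)) \longrightarrow 0
\]
shows that $H^0(\mathcal{I}_E(3))$ is exactly one copy of the regular representation: there is precisely one cubic generator $R_k$ in each weight $k\in\Group$, and $R_k$ is a linear combination of the four weight-$k$ monomials. Listing these monomials reproduces the shapes of $R_0,\dots,R_4$ displayed in the statement.

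It remains to pin down the coefficients and show they are governed by a single ratio $(a:b:c:d)$ and a parameter $(s:t)$. Here the representation theory no longer suffices and a genuine computation enters; this is the main obstacle. From the Hilbert function of an elliptic quintic one obtains the minimal free resolution
\[
0 \longrightarrow S(-5) \longrightarrow S(-4)^{5} \longrightarrow S(-3)^{5} \longrightarrow I_E \longrightarrow 0 ,
\]
the last map being $(R_0,\dots,R_4)$. The existence of the linear syzygy matrix $M$ (a $5\times 5$ matrix of linear forms with $(R_0,\dots,R_4)\cdot M=0$, whose entries are forced by weight considerations into a narrow shape) couples the a priori independent coefficients of the $R_k$. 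Solving $(R_0,\dots,R_4)\cdot M = 0$ equivariantly — equivalently, lifting to the arithmetically Gorenstein elliptic normal quintic in $\IP^4$ with its Heisenberg symmetry and projecting from the weight-$0$ coordinate point — produces the displayed four-parameter family. Injectivity of the parametrization is then checked by recovering $(a:b:c:d)$ and $(s:t)$ from a normalized set of generators, and the nonsingularity assertion says that the curve is smooth away from a discriminant locus; applying the Jacobian criterion to the family and factoring the resulting discriminant in the rescaling-invariant modulus $\tfrac{tbc}{sad}$ isolates the excluded values $0,\infty$ (where the configuration degenerates or becomes reducible) and $\big(\tfrac{-1\pm\sqrt5}{2}\big)^5$ (where $E$ acquires a node compatible with the order-$5$ symmetry).

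Finally, for the invariant quintics I would argue by a dimension count. The same roots-of-unity filter gives $\dim S_5^{\Group}=\tfrac15(56+1+1+1+1)=12$, while $H^0(E,\ko_E(5))$ is five copies of the regular representation, so $\dim H^0(E,\ko_E(5))^{\Group}=5$; as $H^1(\mathcal{I}_E(5))=0$ by $3$-regularity, the invariant part of the restriction sequence gives $\dim H^0(\mathcal{I}_E(5))^{\Group}=12-5=7$. Each of the seven displayed forms is a product of a weight-$(-k)$ quadratic monomial with $R_k$, hence is an invariant quintic vanishing on $E$; since each contains a monomial occurring in none of the others (for instance a fifth power $x_i^5$ for the first four), they are linearly independent and therefore form a basis of the $7$-dimensional space $H^0(\mathcal{I}_E(5))^{\Group}$.
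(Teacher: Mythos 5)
Your proposal is correct in substance, and it is worth saying precisely where it coincides with and where it departs from the paper. The paper proves this theorem essentially by citation: it sketches Reid's argument, namely that an elliptic normal quintic in $\IP^4$ is arithmetically Gorenstein of codimension $3$, hence cut out by the $4\times 4$ Pfaffians of a skew-symmetric $5\times 5$ matrix of linear forms (Buchsbaum--Eisenbud), that every invariant elliptic quintic in $\IP^3$ is the projection of such a curve from $(1{:}0{:}0{:}0{:}0)$ with defining matrix in the normal form $M(a{:}b{:}c{:}d,\,s{:}t)$, and that the five cubics arise by eliminating $x_0$. At the crux your proposal converges to exactly this mechanism (your ``lifting to the arithmetically Gorenstein elliptic normal quintic in $\IP^4$ \dots\ and projecting from the weight-$0$ coordinate point''), and like the paper you leave the derivation of the normal form, the injectivity of the parametrization, and the discriminant computation as a finite equivariant calculation rather than executing it. What your route adds, and the paper entirely omits, is a self-contained representation-theoretic derivation of the \emph{shape} of the answer: the fixed-point-free action forces translation by a nonzero $5$-torsion point, the holomorphic Lefschetz formula makes every $H^0(E,\ko_E(n))$ a sum of $n$ regular representations, and comparing characters with $S_3\cong 4\cdot\IC[\Group]$ via $3$-regularity shows $H^0(\ki_E(3))\cong\IC[\Group]$, i.e.\ exactly one cubic per weight supported on the four monomials of that weight; the same count in degree $5$ ($12-5=7$) gives a genuine proof of the second bullet, which in the paper is pure citation. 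Your character computations check out ($\tr(\xi^k\mid S_5)=1$ via $\prod_i(1-\xi^i t)^{-1}=(1-t)^{-1}\cdot\frac{1-t}{1-t^5}$, so $\dim S_5^{\Group}=\tfrac15(56+4)=12$).

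Three soft spots, none fatal. First, the Hilbert function alone does not yield \emph{minimality} of the resolution $0\to S(-5)\to S(-4)^5\to S(-3)^5\to I_E\to 0$ (ghost summands $S(-5)^a$ in both of the last two terms are numerically possible); one should either invoke that the Hartshorne--Rao module is $\IC$ concentrated in degree $1$, or simply bypass the resolution, since your argument only ever uses the $\IP^4$ lift, which needs no syzygy input beyond $H^0(E,\ko_E(1))\cong\IC[\Group]$. Second, your independence argument for the seven quintics is stated too strongly: the monomials of $x_3x_4R_3$ all occur among those of $x_1x_4R_0$ and $x_2x_3R_0$, so not every form has a private monomial; independence still holds (e.g.\ $x_1^2x_2^2x_4$ and $x_1x_2^3x_3$ occur only in $x_1x_4R_0$ resp.\ $x_2x_3R_0$, forcing those coefficients to vanish first), and the fifth-power trick for the first four needs $s,t\neq 0$ and $a,b,c,d\neq 0$, which the smoothness hypothesis supplies. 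Third, the Lefschetz/translation step presupposes $E$ smooth, whereas the theorem's parametrization also covers the singular members of the family; as in Reid, these are recovered as degenerations within the normal form, so this is a matter of phrasing rather than a gap.
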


\begin{proof}
Reid first considers elliptic normal curves in $\IP^4$. These are always defined by 
$4 \times 4$ Pfaffians of a linear skew-symmetric $5 \times 5$ matrix. He then shows that every invariant quintic elliptic curve in $\IP^3$ is the projection from $(1:0:0:0:0)$ of an elliptic normal curve whose defining matrix has normal form
\[
M(a:b:c:d,s: t) = 
\begin{pmatrix}0&
      x_{1}&
      x_{2}&
      x_{3}&
      x_{4}\\
      {-x_{1}}&
      0&
      c x_{3}&
      d x_{4}&
      s x_{0}\\
      {-x_{2}}&
      {-c x_{3}}&
      0&
      t x_{0}&
      a x_{1}\\
      {-x_{3}}&
      {-d x_{4}}&
      {-t x_{0}}&
      0&
      b x_{2}\\
      {-x_{4}}&
      {-s x_{0}}&
      {-a x_{1}}&
      {-b x_{2}}&
      0\\
      \end{pmatrix}.
\]
He then obtains the above formulas by eliminating $x_0$ from the Pfaffians of this matrix. See \cite{Reid} for further details. 
\end{proof}

\begin{prop}
The operation of $\mathrm{Aut}(X)$ on the parameter space $\IP^3 \times \IP^1$ of invariant quintic elliptic curves in $\IP^3$ is given by:
\begin{align*}
	\gamma(a:b:c:d,s:t) &= (\xi^2a:\xi b:\xi^{-1}c:\xi^{-2} d, s: t) \\
	\delta(a:b:c:d, s: t) &= (\xi a:\xi^{-1}b:\xi^{-1}c:\xi d,\xi^{-2} s:\xi^{2} t) \\
	\alpha (a:b:c:d, s: t) &= (-b:d:a:-c,t:-s) \\
	\beta(a:b:c:d,s:t) &= (d:c:b:a,s:t)
\end{align*}
\end{prop}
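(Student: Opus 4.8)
The plan is to exploit the fact that $\autGodeaux$ lifts to a group of linear automorphisms of $\IP^3$ which preserve the Fermat quintic $\Fermat$ and normalize the $\Group$-action. Consequently each generator $g \in \{\gamma, \delta, \alpha, \beta\}$ carries a $\Group$-invariant elliptic quintic to another such curve (degree, arithmetic genus and $\Group$-invariance are preserved, and one checks that no coordinate point is created), and hence induces a self-map of the parameter space $\IP^3 \times \IP^1$. By Theorem \ref{t:ellipticReid} the ideal of the curve $E(a:b:c:d,s:t)$ is generated by the five cubics $R_0, \dots, R_4$, and the parameters $(a:b:c:d)$ and $(s:t)$ can be read off from their coefficients; so the induced action is completely determined by how the tuple $(R_0,\dots,R_4)$ transforms when the linear substitution defining $g$ is applied.

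The organizing observation is that $R_i$ is $\Group$-homogeneous of weight $i$ (every monomial of $R_i$ has $\Group$-weight $\equiv i \pmod 5$). The diagonal automorphisms $\gamma$ and $\delta$ commute with the $\Group$-action and therefore preserve this weight grading, so substitution sends each $R_i$ to a scalar multiple of the Reid cubic $R_i'$ of the image curve, with no reindexing. By contrast $\alpha$, which takes $x_1,x_2,x_3,x_4$ to $x_3,x_1,x_4,x_2$, rescales the weight grading by the unit $3 \in (\Group)^{\times}$ (weight $w \mapsto 3w \bmod 5$), so it sends $R_i$ to a multiple of $R_{3i \bmod 5}'$. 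Knowing this permutation of indices in advance makes the subsequent matching purely mechanical.

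Concretely, for each of $\gamma, \delta, \alpha$ I would substitute the linear coordinate change into $R_0$ and compare the four monomial coefficients with those of $a' x_1^2 x_3 - b' x_1 x_2^2 + c' x_3^2 x_4 - d' x_2 x_4^2$; this immediately yields $(a':b':c':d')$ (for instance $\gamma$ multiplies $(a,b,c,d)$ by $(\xi^2, \xi, \xi^{-1}, \xi^{-2})$, and $\alpha$ sends it to $(-b:d:a:-c)$). To recover $(s':t')$ I would then substitute into $R_1$, whose coefficients are bilinear in $(a,b,c,d)$ and $(s,t)$, write the result as $\lambda R'_j$ for the predicted index $j$ and an undetermined scalar $\lambda$, and solve; the scalar $\lambda$ drops out of the projective ratio $(s':t')$, and the fact that all four coefficient equations in $R_1$ return the same ratio is a built-in consistency check. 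This gives $(s:t)$ fixed by $\gamma$, scaled to $(\xi^{-2}s : \xi^2 t)$ by $\delta$, and sent to $(t:-s)$ by $\alpha$. Finally, $\beta = \alpha^2$ is obtained by composing the $\alpha$-formula with itself, which returns $(d:c:b:a,\, s:t)$.

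The step I expect to be the main obstacle is the conceptual one underlying everything else: justifying cleanly that substituting $g$ into the $R_i$ produces, up to scalars and the predicted reindexing, exactly the Reid cubics of a genuine member of the family, so that the new parameters are well defined and unambiguous. Once the image curve is known to be of Reid's form and to avoid the coordinate points (the only case excluded in Theorem \ref{t:ellipticReid}), the remainder is bookkeeping, and the weight-grading remark reduces even the choice of which $R_j'$ to match against to a trivial computation. Care is needed only with the direction of the action (substituting $g$ rather than $g^{-1}$, which fixes whether one obtains the action or its inverse) and with tracking the overall scalar when extracting $(s':t')$.
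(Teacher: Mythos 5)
Your proof is correct, but it follows a genuinely different route from the paper's. The paper works one level up, in $\IP^4$: by Theorem \ref{t:ellipticReid}, every invariant elliptic quintic is the projection of an elliptic normal curve cut out by the Pfaffians of the skew matrix $M(a:b:c:d,s:t)$, and the paper's proof consists of applying $\gamma,\delta,\alpha,\beta$ to $M$ and renormalizing back to the normal form, reading off the new parameters from the renormalized matrix. You instead stay in $\IP^3$ and work with the five cubics $R_0,\dots,R_4$ directly, comparing coefficients after substitution. Each approach has its merits: the matrix computation is a one-step substitution plus renormalization and exploits the rigidity of the Pfaffian normal form, but it silently requires extending each automorphism to $\IP^4$ (choosing its action on the auxiliary coordinate $x_0$), which is absorbed into the word ``renormalize''; your route avoids that lift entirely, and your $\Group$-weight observation is exactly what makes it clean. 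Indeed, the step you flag as the main obstacle is not a gap: by Reid's theorem the degree-$3$ part of the ideal of the image curve is the $5$-dimensional span of its Reid cubics, and since $R_i$ is $\Group$-homogeneous of weight $i$, this space splits into five distinct one-dimensional eigenspaces; any $\Group$-normalizing linear substitution sends eigenvectors to eigenvectors (with the weight rescaled by the relevant unit, $3$ for $\alpha$), so each $R_i\circ g$ is forced to be a scalar multiple of a uniquely determined $R'_j$, and the parameters are well defined because Reid's parametrization is $1$-to-$1$ and the image curve still avoids the coordinate points (these are permuted by the generators). One can check, e.g.\ for $\alpha$, that this coefficient matching reproduces exactly $(-b:d:a:-c,\,t:-s)$, so the substitution convention you adopt yields the formulas as stated, and $\beta=\alpha^2$ follows by composition just as you say.
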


\begin{proof}
Apply $\gamma, \delta, \alpha$ and $\beta$ to $M(a:b:c:d,s:t)$ and renormalize.
\end{proof}

\begin{prop} \label{p:coeffElliptic}
There are exactly $50$ invariant elliptic quintic curves on the Fermat surface. They are
given by the following points in Reid's parameter space:
\[
	e_{i,j}^\pm = (\xi^{i+2j}:-\xi^{-i+j}:\xi^{-i-j}:-\xi^{i-2j}, \pm \golden^{\pm1}:\xi^{-i})
\]
with $\golden := - \xi^3- \xi^2$ the golden section. 
\end{prop}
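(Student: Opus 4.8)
The plan is to use the second part of Reid's Theorem \ref{t:ellipticReid}. Since the Fermat form $F = x_1^5 + x_2^5 + x_3^5 + x_4^5$ is $\Group$-invariant, an invariant elliptic quintic $E = E(a:b:c:d,\,s:t)$ lies on $\Fermat$ if and only if $F$ vanishes on $E$, and because $F$ is invariant this happens exactly when $F$ belongs to the $7$-dimensional space of $\Group$-invariant quintics vanishing on $E$. By Theorem \ref{t:ellipticReid} that space is spanned by
\[
x_1^2 R_3,\; x_2^2 R_1,\; x_3^2 R_4,\; x_4^2 R_2,\; x_1 x_4 R_0,\; x_2 x_3 R_0,\; x_3 x_4 R_3,
\]
so the condition $E \subset \Fermat$ is equivalent to the solvability, for some scalars $\lambda_1, \dots, \lambda_7$, of the identity $F = \lambda_1 x_1^2 R_3 + \lambda_2 x_2^2 R_1 + \lambda_3 x_3^2 R_4 + \lambda_4 x_4^2 R_2 + \lambda_5 x_1 x_4 R_0 + \lambda_6 x_2 x_3 R_0 + \lambda_7 x_3 x_4 R_3$ in $\IC[x_1,\dots,x_4]$.

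First I would expand each of the seven basis products explicitly in terms of the $\Group$-invariant quintic monomials, using the formulas for $R_0, \dots, R_4$ from Reid's theorem. Matching coefficients turns the displayed identity into a linear system: the coefficient of each pure power $x_i^5$ must equal $1$ (each such power does receive a contribution, e.g.\ $x_1^2 R_3$ contains $a\,t\,x_1^5$ and $x_4^2 R_2$ contains $-d\,t\,x_4^5$), while the coefficient of every other invariant quintic monomial must vanish. This is an overdetermined system in the seven unknowns $\lambda_k$ whose coefficients are polynomials in $a,b,c,d,s,t$. Eliminating the $\lambda_k$ yields a set of polynomial equations in the parameters alone, homogeneous separately in $(a:b:c:d)$ and in $(s:t)$. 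I expect this elimination to be the genuine obstacle, and it is precisely the Gr\"obner basis computation flagged in the introduction; by \cite{BBS} it can be carried out explicitly.

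Finally I would solve the resulting system. Passing to affine charts in the two projective factors, the equations should force the ratios among $a,b,c,d$ to be the prescribed powers of $\xi$ and pin $s:t$ down to $\pm \golden^{\pm 1} : \xi^{-i}$, where $\golden = -\xi^3 - \xi^2 = (1+\sqrt 5)/2$; the appearance of the golden section is exactly what one anticipates from the quadratic relations produced by equating fifth powers. One then checks that the solution scheme consists of precisely the $50$ reduced points $e_{i,j}^{\pm}$, that each avoids the coordinate points (so that Reid's parametrisation genuinely applies) and satisfies his nonsingularity condition $\tfrac{tbc}{sad} \notin \{0,\infty, \golden^{-5}, -\golden^{5}\}$ --- indeed for $e_{i,j}^\pm$ one computes $\tfrac{tbc}{sad} = \pm\golden^{\mp 1}$, which lies outside the forbidden set. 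As an independent confirmation, the $50$ points form a single configuration under the explicit action of $\autGodeaux$ on $\IP^3 \times \IP^1$ computed in the preceding proposition, consistent with $\abs{\autGodeaux} = 100$ acting with stabiliser of order $2$; this both explains the count and serves as a check on the computation.
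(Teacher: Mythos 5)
Your proposal is correct and is essentially the paper's own argument: both invoke Reid's Theorem \ref{t:ellipticReid}, translate $E\subset \Fermat$ into the condition that the Fermat form lie in the $7$-dimensional space of invariant quintics vanishing on $E$ (which, after comparing coefficients of the invariant quintic monomials, is a rank condition on a $12\times 8$ coefficient matrix --- exactly the same thing as eliminating your $\lambda_k$), and settle the resulting zero-dimensional problem in $\IP^3\times\IP^1$ by the Macaulay2 computation of \cite{BBS}. The only organizational difference is that the paper gets the $50$ explicit points cheaply --- $e_{0,0}^\pm$ is quoted from \cite{GulPed} and the remaining points are its orbit under $\delta^j\gamma^i$ --- so the computer need only certify that the saturation of the ideal of $8\times 8$ minors has degree $50$, whereas you propose to solve the eliminated system outright, which is computationally heavier but yields the coordinates (and your nonsingularity check $tbc/sad=\pm\golden^{\mp 1}$) directly.
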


\begin{proof}
The point $e_{0,0}^\pm$ has been calculated in \cite{GulPed}. The other points are obtained by
applying $\delta^j\gamma^i$. This shows that there are at least $50$ invariant elliptic quintic curves on the Fermat surface.

To show that there are exactly $50$ such curves we use Theorem \ref{t:ellipticReid}. By comparing coefficients of monomials in the $x_i$, we see that the condition that $x_1^5+x_2^5+x_3^5+x_4^5$ is in the given $7$-dimensional vector space is equivalent to 
\[
	\rank M := \rank 
	\begin{pmatrix}a t&
      0&
      0&
      0&
      0&
      0&
      0&
      1\\
      0&
      {-b s}&
      0&
      0&
      0&
      0&
      0&
      1\\
      0&
      a s&
      0&
      0&
      0&
      {-b}&
      0&
      0\\
      c s&
      0&
      b t&
      0&
      0&
      a&
      0&
      0\\
      0&
      0&
      c s&
      0&
      0&
      0&
      0&
      1\\
      {-d s}&
      {-a t}&
      0&
      0&
      {-b}&
      0&
      0&
      0\\
      c t&
      0&
      0&
      0&
      a&
      0&
      a t&
      0\\
      0&
      0&
      {-d s}&
      0&
      0&
      c&
      c s&
      0\\
      0&
      {-c t}&
      0&
      {-b s}&
      0&
      {-d}&
      {-d s}&
      0\\
      0&
      0&
      d t&
      a s&
      c&
      0&
      c t&
      0\\
      0&
      0&
      0&
      {-b t}&
      {-d}&
      0&
      0&
      0\\
      0&
      0&
      0&
      {-d t}&
      0&
      0&
      0&
      1\\
      \end{pmatrix} = 7.
\]
Since $a,b,c,d$ and $(s:t)$ are nonzero, the set of invariant elliptic curves on the Fermat is described in $\IP^3 \times \IP^1$  by the ideal of $8 \times 8$ minors of $M$ saturated by $a,b,c,d$ and $(s,t)$. With a computer algebra system one can check that the vanishing locus of this saturation has degree $50$. See \cite{BBS} for a Macaulay2 script doing this calculation.
\end{proof}

\begin{prop}\label{p:intersectionElliptic}
The intersection pairing on the $50$ degree $1$ elliptic curves on the Godeaux surface is
given by

\begin{center}
\begin{tabular}{c|ccc}
	$E^+_{i,j}.E^+_{k,l}$ & &  $j-l$ \\ 
		                        & 0 & $\pm1$ & $\pm2$ \\ \hline
	$i =k$		     & -1 & 0 & 1 \\
	$i\not=k$		     & 0 & 1 & 2 \\ 
\end{tabular} \quad\quad
\begin{tabular}{c|ccc}
	$E^-_{i,j}.E^-_{k,l}$ & &  $j-l$ \\ 
		                        & 0 & $\pm1$ & $\pm2$ \\ \hline
	$i =k$		     & -1 & 1 & 0 \\
	$i \not=k$		     & 0 & 2 & 1 \\ 
\end{tabular}
\end{center}
\vspace{5mm}
\begin{center}
\begin{tabular}{c|cc}
	$E^+_{i,j}.E^-_{k,l}$ & $j=l$ & $j\not=l$ \\ \hline
	$i=k$		     & 1 & 2 \\ 
	$i \not=k$		     & 0 & 1 \\
\end{tabular}
\end{center}
\end{prop}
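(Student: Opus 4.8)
The plan is to pull everything back to the Fermat surface $Y$, where the fifty curves are given explicitly by Reid's equations (Theorem \ref{t:ellipticReid}), and to exploit the copy of $(\Group)^2$ sitting inside $\mathrm{Aut}(X)$ to reduce the number of genuine computations to a handful. Throughout I use that $p\colon Y\to X$ is étale of degree $5$ and that each invariant elliptic quintic $\tilde E$ is the full preimage of its image $E$, so that $p^*E=\tilde E$ as divisors.

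First I would record the covering relation. Since $p^*E=\tilde E$, the formula $p^*D_1\cdot p^*D_2=\deg(p)\,(D_1\cdot D_2)$ for finite flat maps of smooth surfaces gives $\tilde E_1\cdot_Y\tilde E_2=5\,(E_1\cdot_X E_2)$, so that $E_1\cdot_X E_2=\tfrac15\,\tilde E_1\cdot_Y\tilde E_2$. The diagonal entries of all three tables, equal to $-1$, then follow at once from adjunction: since $K_Y=\mathcal{O}_Y(1)$ and each $\tilde E$ is a smooth elliptic curve of degree $5$, one has $0=2g-2=\tilde E^2+\tilde E\cdot K_Y=\tilde E^2+5$, whence $\tilde E^2=-5$ and $E^2=-1$. (This is precisely the value forced by adjunction on $X$ for any degree $1$ genus $1$ curve, so no sign ambiguity can arise.)

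For the off-diagonal entries I would compute $\tilde E_1\cdot_Y\tilde E_2$ for distinct curves as the length of their scheme-theoretic intersection. Two distinct members of Reid's family are distinct irreducible curves and so meet in a finite set; locally on the smooth surface $Y$ their equations form a regular sequence, so there are no higher $\mathrm{Tor}$ contributions and $\tilde E_1\cdot_Y\tilde E_2=\mathrm{length}\,\mathcal{O}_{\tilde E_1\cap\tilde E_2}$. Before enumerating cases I would invoke the translation action: from the $\mathrm{Aut}(X)$-action on Reid's parameters one checks that $\gamma$ and $\delta$ act on indices by $e^{\pm}_{i,j}\mapsto e^{\pm}_{i,j+1}$ and $e^{\pm}_{i,j}\mapsto e^{\pm}_{i+1,j}$, so $\langle\gamma,\delta\rangle\cong(\Group)^2$ translates the index lattice. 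Applying $\delta^{-k}\gamma^{-l}$ to both arguments shows $E^m_{i,j}\cdot E^n_{k,l}$ depends only on $(i-k,\,j-l)$ and the signs $m,n$; thus each table is determined by the values $E^m_{a,b}\cdot E^n_{0,0}$.

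The residual symmetries — dependence only on $|j-l|$, and for $i\neq k$ independence of the actual value of $i-k$ — I would extract from the remaining automorphisms $\alpha,\beta$ together with the Galois action of $\Gal(\IQ(\xi):\IQ)$, which is cyclic of order $4$. Galois is legitimate because $Y$ is defined over $\IQ$ while the curves live over $\IQ(\xi)$, so $\sigma_c\colon\xi\mapsto\xi^c$ permutes the fifty curves and fixes the integer intersection numbers; tracking its effect on Reid's parameters (in particular $\sigma_2(\golden)=-\xi-\xi^4=-\golden^{-1}$, which interchanges $\golden^{\pm1}$ and thereby relates the $E^+$ and $E^-$ families) rescales $i\mapsto ci$ and produces exactly the observed coincidences. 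After these reductions only a few base intersections $E^m_{a,b}\cdot E^n_{0,0}$ remain, and I would evaluate these directly from the cubic generators $R_0,\dots,R_4$, for convenience on a computer algebra system as in the script of \cite{BBS}. The main obstacle I anticipate is neither the covering relation nor the symmetry bookkeeping, both of which are formal, but verifying that the naive length count faithfully computes the surface intersection number — confirming that distinct curves in Reid's family always meet properly, with no tangential or embedded contributions — and checking that the combined Galois-and-automorphism orbit structure is rich enough to force \emph{every} entry of the three tables from the few computed base cases.
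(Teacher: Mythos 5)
Your proposal is correct and follows essentially the paper's own route: the paper's second proof likewise uses the transitive $\mathrm{Aut}(X)$-action together with the Galois group of $\IQ(\xi):\IQ$ (via $\rho_r$, complex conjugation, and the automorphisms $\alpha$, $\beta$) to reduce everything to a handful of base intersections $E^+_{0,0}\cdot E^{\pm}_{k,l}$ with $k,l$ small, which are then evaluated by a Macaulay2 computation on the explicit Reid ideals, the Godeaux intersection numbers being the Fermat-level degrees divided by $5$ exactly as in your covering relation. Your adjunction computation of the diagonal entries and your justification that the length of the intersection scheme faithfully computes the intersection number for distinct irreducible curves on a smooth surface are correct (and standard) supplements to steps the paper delegates to the computer script.
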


\begin{proof}
By Reid's classification and Proposition \ref{p:coeffElliptic} we have the
explicit ideals for all elliptic curves. The degree of intersection on the Fermat surface is
the degree of the vanishing set of the sum of ideals. Dividing this degree by $5$
we obtain the intersection degree on the Godeaux surface. A Macaulay2 script
doing this calculation can be found at \cite{BBS}.
\end{proof}

\begin{proof}[Proof 2]
Since $\autGodeaux$ acts transitively on the set of elliptic quintic curves, we can assume
$E^\pm_{ij} = E^+_{00}$. Now the Galois group of $\IQ(\xi):\IQ$ is generated by
\[
	\rho_r(\xi) := \xi^r
\]
and the complex conjugation
\[
	\iota(\xi) := \bar{\xi}.
\]
We have
\[
	\rho_r(E^\pm_{kl}) = \left\{
	\begin{matrix}
	E^\pm_{rk,rl} & \text{if $r=1,3$} \\
	E^\mp_{rk,rl} & \text{if $r=2,4$} 
	\end{matrix}
	\right.
\]
and
\[
	\iota(E^\pm_{k,l}) = E^\pm_{-k,-l}.
\]
In particular, the subgroup $\{\rho_1,\iota\rho_3,\rho_3,\iota\}$ leaves $E^+_{00}$ invariant.
Applying these Galois transformations we can assume that $E^\pm_{kl}$ is either
$E^\pm_{0l}$ or $E^\pm_{1l}$ depending on whether $k=0$ or $k\not=0$. Now
\[
	\beta(E^\pm_{kl}) = E^\pm_{k,-l}.
\]
So we can assume that $l = 0,1,2$. Calculating the intersections $E^+_{00}.E^+_{kl}$ for $k=0,1$ and $l=0,1,2$ using a computer algebra program we obtain the first table. Applying $\iota \circ \rho_4$ gives the second table.

If $E^+_{00}.E^-_{kl}$ with $k=0,1$ and $l=0,1,2$, we consider the operation of $\alpha$
\[
	\alpha (E^\pm_{k,l}) = E^\mp_{-k,-2l}.
\]
If $l=2$ we obtain
\begin{align*}
	&\beta \delta \gamma^4\alpha (E^+_{00}.E^-_{k2}) \\
	&=\beta \delta^k \gamma^4( E^-_{00}.E^+_{-k,-4})\\
	&=\beta(E^-_{k4}.E^+_{00})\\
	&=E^-_{k1}.E^+_{00}
\end{align*}
We therefore only need to compute $E^+_{00}.E^-_{kl}$ with $k,l \in \{0,1\}$, which yields the third table.
\end{proof}

\begin{prop}\label{p:intersectionEllipticLine}
The intersection pairing of the $50$ degree $1$ elliptic curves with the $15$ lines on the Godeaux surface is
given by

\begin{center}
\begin{tabular}{c|ccc}
	 & &  $i-k$ \\ 
		                                & 0 & $\pm1$ & $\pm2$ \\ \hline
	$E^\pm_{i,j}.L^\pm_k$ & 1 & 2 & 0 \\
	$E^\pm_{i,j}.L^\mp_k$ & 1 & 0 & 2 \\ 
\end{tabular} \quad\quad
\begin{tabular}{c|ccc}
	 & &  $j-k$ \\ 
		                       & 0 & $\pm1$ & $\pm2$ \\ \hline
	$E^-_{i,j}.L^0_k$ & 3 & 0 & 1 \\
	$E^+_{i,j}.L^0_k$ & 3 & 1 & 0 \\
\end{tabular} \quad\quad
\end{center}
\end{prop}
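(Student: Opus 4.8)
The plan is to follow the strategy of the second proof of Proposition~\ref{p:intersectionElliptic}: cut down the number of cases to be evaluated using the actions of $\autGodeaux$ and $\Gal(\IQ(\xi):\IQ)$, and then compute the remaining intersections on the Fermat surface $Y$ from the explicit equations. Intersection numbers on $X$ are read off from those on $Y$ via the degree-$5$ \'etale quotient $p$, as in the proofs of Propositions~\ref{p:intersectionLine} and \ref{p:intersectionElliptic}, with the bookkeeping that an invariant elliptic quintic pulls back to itself while a line $L^m_k$ pulls back to its $\Group$-orbit of five lines. Concretely, $E^\pm_{i,j}.L^m_k$ equals the intersection degree on $Y$ of the curve $E^\pm_{i,j}$ with a single representative line of the orbit; since a line and an elliptic quintic share no component, this is just the length of the vanishing scheme of the sum of the two ideals, which a computer algebra system evaluates directly (see \cite{BBS}).

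First I would record how the relevant symmetries act on the two families. On the elliptic side we already know from Proposition~\ref{p:coeffElliptic} that $\gamma$ and $\delta$ send $E^\pm_{i,j}$ to $E^\pm_{i+1,j}$ and $E^\pm_{i,j+1}$ respectively, and the Galois, $\alpha$ and $\beta$ actions are as recorded in the proof of Proposition~\ref{p:intersectionElliptic}. The missing ingredient is the corresponding action on the fifteen lines $L^{+/0/-}_k$, which I would obtain by substituting the diagonal and permutation automorphisms into the parametrizations of Theorem~\ref{tLines} and renormalizing, exactly as Reid's matrix was transformed to describe the action on the elliptic parameter space. This yields, for each generator, a permutation of $\{L^m_k\}$ together with a possible exchange of the upper indices $+/0/-$ and a shift of the lower index $k$; here one must also keep track of the shift in $k$ forced by the $\Group$-action, which cyclically permutes each orbit of lines on $Y$.

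With both actions in hand, I would invoke transitivity of $\autGodeaux$ (together with the Galois action) on the fifty elliptic curves, as in Proposition~\ref{p:intersectionElliptic}, to move any $E^\pm_{i,j}$ to $E^+_{0,0}$. Every entry of the three tables then becomes an intersection $E^+_{0,0}.L^{m'}_{k'}$ for a suitable index, so it suffices to compute on $Y$ the fifteen numbers $E^+_{0,0}.L^m_k$, $m\in\{+,0,-\}$, $k\in\ZZ/5$. Tracking the induced relabelling of the lines then shows that $E^\pm_{i,j}.L^{\pm}_k$ and $E^\pm_{i,j}.L^{\mp}_k$ depend only on the difference $i-k$, while $E^\pm_{i,j}.L^0_k$ depends only on $j-k$, which accounts for the shape of the three tables; the entries are filled in from the fifteen computed values.

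The main obstacle I anticipate is purely combinatorial: correctly determining the action of $\gamma,\delta,\alpha,\beta$ and of the Galois elements on the fifteen lines, including the index shifts coming from the $\Group$-action on each orbit, and then verifying that the resulting orbits of pairs (line, elliptic curve) match the claimed dependencies on $i-k$ and $j-k$. Once these permutations are pinned down the symmetry reduction is automatic, and the entire numerical content comes down to the fifteen Gr\"obner-basis computations of intersection degrees on $Y$.
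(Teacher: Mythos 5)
Your proposal is correct and takes essentially the same approach as the paper, whose proof of Proposition \ref{p:intersectionEllipticLine} is precisely a direct Macaulay2 computation of the intersection degrees from the explicit ideals on the Fermat surface, with the shape of the tables attributed to the actions of $\autGodeaux$ and $\Gal(\IQ(\xi):\IQ)$ --- your explicit symmetry reduction just spells out what the paper carries out in the analogous Proof 2 of Proposition \ref{p:intersectionElliptic}. Your descent bookkeeping is also the right one: since the invariant elliptic quintic pulls back to itself while the line pulls back to its $\Group$-orbit, intersecting with a single representative line on $Y$ already gives the number on $X$ (no division by $5$, in contrast to the elliptic--elliptic case).
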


\begin{proof}
This can again be done by direct calculation. A Macaulay2 script doing this can be found at \cite{BBS}. The symmetry is explained by the operation of $\autGodeaux$ and $\Gal(\IQ(\xi):\IQ)$.
\end{proof}

We can now write down an integral basis of $N(X)$ consisting of classes of \emph{effective} divisors. This facilitates many computations in the following.

\begin{cor}
The effective divisors \[ \{E^+_{10},E^+_{20},E^+_{30},E^+_{40},K\otimes \mathcal{O}_{\tau},E^-_{11},E^-_{22},E^-_{33},E^-_{44} \} ,\] where $\mathcal{O}_{\tau}$ is a nontrivial torsion bundle on $X$, form a $\IZ$-basis of $N(X)$, the group of divisors on $X$ modulo numerical equivalence. The intersection matrix is given by
\[
	\left(
	\begin{tabular}{cccc|c|cccc}
	-1 & 0 & 0 & 0  & 1 & 2 & 1 & 1 & 1 \\
	0 & -1 & 0 & 0 & 1 & 1 & 2 & 1 & 1 \\ 
	 0 & 0 & -1 & 0  & 1& 1 & 1 & 2 & 1 \\ 
	 0 & 0 & 0 & -1 & 1 & 1 & 1 & 1 & 2 \\ \hline
	 1 & 1 & 1 & 1  & 1& 1 & 1 & 1 & 1 \\ \hline
	 2 & 1 & 1 & 1 & 1 & -1 & 2 & 1 & 1 \\ 
	 1 & 2 & 1 & 1  & 1& 2 & -1 & 2 & 1 \\ 
	 1 & 1 & 2 & 1  & 1& 1 & 2 & -1 & 2 \\ 
	 1 & 1 & 1 & 2  & 1 & 1 & 1 & 2 & -1 
	\end{tabular}
	\right) .
\]

\end{cor}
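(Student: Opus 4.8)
The statement has two essentially independent parts: the explicit form of the Gram matrix, and the assertion that the nine listed classes form a $\mathbb{Z}$-basis of $N(X)$. Both reduce to a finite computation once the pairwise intersection numbers are assembled, so the plan is first to fill in the $9\times 9$ matrix entry by entry from the three preceding intersection tables, and then to deduce the basis property from the unimodularity of the pairing on $N(X)$ recorded in Section \ref{sKgroup}.

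For the entries I would proceed block by block, following the given ordering of the divisors. The top-left $4\times4$ block records $E^+_{i,0}.E^+_{k,0}$ and is read off from the first table of Proposition \ref{p:intersectionElliptic} with $j=l=0$; since then $j-l=0$ throughout, this block is $-\mathrm{Id}$. The bottom-right $4\times4$ block records $E^-_{i,i}.E^-_{k,k}$ and comes from the second table, where one must track $j-l=i-k$ modulo $5$ (so that, e.g., $i-k=-3$ is read as $\pm 2$). The off-diagonal $E^+_{i,0}.E^-_{k,k}$ block follows from the third table, distinguishing only whether $i=k$ and whether $j=0$ equals $l=k$. The remaining row and column involve $K\otimes\mathcal{O}_{\tau}$: since $\mathcal{O}_{\tau}$ is torsion this class is numerically equivalent to $K_X$, so its pairing with any of the degree-$1$ curves equals the $K_X$-degree, namely $1$, and its self-intersection is $K_X^2=1$ (Section \ref{sKgroup}). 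Collecting these reproduces the stated matrix; the only place demanding care is the modular bookkeeping of the indices.

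For the basis assertion I would use that the intersection pairing on the rank-$9$ lattice $N(X)$ is unimodular (Section \ref{sKgroup}). Writing $\Lambda'\subseteq N(X)$ for the sublattice generated by the nine classes and $M$ for the integer matrix expressing them in a $\mathbb{Z}$-basis of $N(X)$, the displayed matrix is $MAM^{T}$ with $A$ a unimodular Gram matrix of $N(X)$; hence its determinant equals $\pm(\det M)^2=\pm[N(X):\Lambda']^2$. It therefore suffices to check that the displayed matrix has determinant $\pm 1$ (in fact $+1$, consistent with the signature $(1,8)$ forced by the Hodge index theorem), a single routine determinant evaluation. This forces $[N(X):\Lambda']=1$, so the nine classes span $N(X)$ and form a $\mathbb{Z}$-basis.

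It remains to note effectivity: the elliptic quintics and the lines are manifestly effective, while $K\otimes\mathcal{O}_{\tau}$ is effective for every nontrivial torsion $\tau$ by Riemann--Roch. Indeed $\chi(K_X+\tau)=\chi(\mathcal{O}_X)+\tfrac12(K_X+\tau).\tau=1$ since $\tau$ is numerically trivial, while $h^2(K_X+\tau)=h^0(-\tau)=0$ for $\tau$ nontrivial, whence $h^0(K_X+\tau)\ge 1$. The main obstacle is thus organisational rather than mathematical: keeping the index conventions of the three tables straight (especially the reductions modulo $5$) so that the block computation lands exactly on the asserted matrix, after which the basis property is immediate from unimodularity.
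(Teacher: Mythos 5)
Your proposal is correct and takes essentially the same approach as the paper: the Gram entries are read off from the intersection tables together with $E^{\pm}_{i,j}.K_X=1$ and $K_X^2=1$, and the basis property follows from the determinant being $1$ combined with the unimodularity of the pairing on $N(X)$ from Section \ref{sKgroup} --- exactly the step the paper's one-line proof leaves implicit, which you correctly spell out via $\det = \pm(\det M)^2$. The only blemish is cosmetic: there are no lines among the nine classes (all are elliptic quintics except $K_X\otimes\mathcal{O}_{\tau}$, whose effectivity your Riemann--Roch argument correctly supplies, consistent with $h^0(X,K_X\otimes\mathcal{O}_{\tau})=1$ as cited from Reid in Section \ref{sTorsion}).
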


\begin{proof}
All elliptic curves constructed above are of degree $1$, i.e $E^\pm_{ij}.K=1$. The other
intersection numbers follow from Proposition \ref{p:intersectionElliptic}. The determinant of this matrix is $1$, so we have a $\IZ$-Basis.
\end{proof}

In Section \ref{sE8} we will exhibit another $\mathbb{Z}$-basis of $N(X)$, with respect to which the intersection pairing becomes that of the $\textbf{1}\oplus (-E_8)$-lattice, but it does not consist of effective divisors and is therefore somewhat less well adapted to computations.

\begin{remark}\xlabel{rGuletskiiPedrini}
In \cite{GulPed}, Guletskii and Pedrini write down a potential $\mathbb{Q}$-basis of $N(X)_{\mathbb{Q}}$. This contains an error, however, as the second author kindly confirmed: the intersection matrix in \cite{GulPed} is not correct.  They write $M_2.M_3 =1$, whereas the correct intersection number is $5$, compare our Proposition \ref{p:intersectionLine}. With this correction, their intersection matrix does not have full rank. \end{remark}

\section{The $E_8$-symmetry}\xlabel{sE8}

The orthogonal complement of $K_X$ in $N(X)$, the group of divisors on the Godeaux surface $X$ modulo numerical equivalence, is the lattice of type $-E_8$. The symmetry group is the Weyl group $W(E_8)$ of the root system of type $E_8$. Its order is $2^{14} \times 3^5 \times 5^2 \times 7$. 
It has a description in terms of finite groups of Lie type as an extension
\[
1\to \mathbb{Z}/2\mathbb{Z} \to W(E_8) \to \mathrm{O}_8^+ (\mathbb{F}_2) \to 1
\]
where $\mathrm{O}_8^+ (\mathbb{F}_2)$ is one of the orthogonal groups over $\mathbb{F}_2$ (see \cite{MT}). This description can be found in \cite{Bourbaki}.

We have to introduce some notation below to be able to talk about this. Recall the fundamental (see e.g. \cite{Humphreys}, \cite{Bourbaki}, \cite{FH})

\begin{definition}\xlabel{dRootSystem}
Let $E$ be a Euclidean vector space, i.e. a finite dimensional real vector space with a symmetric positive definite bilinear form $( \cdot , \cdot )$. For $\alpha \neq 0$ we use the standard abbreviation
\[
\langle \beta , \alpha \rangle := \frac{2 (\beta , \alpha )}{(\alpha , \alpha )} \, .
\]
The reflection $\sigma_{\alpha }$ in the hyperplane orthogonal to $\alpha \neq 0$ is then given by
\[
\sigma_{\alpha } (\beta ) = \beta - \langle \beta , \: \alpha \rangle \alpha \, .
\]
A subset $\Phi$ of $E$ is called a root system if the following axioms are satisfied:
\begin{itemize}
\item[(R1)]
$\Phi$ is finite, spans $E$ and does not contain $0$.
\item[(R2)]
If $\alpha \in \Phi$, then the only multiples of $\alpha$ in $\Phi$ are $\pm \alpha$.
\item[(R3)]
If $\alpha$ is in $\Phi$, then the reflection $\sigma_{\alpha }$ leaves $\Phi$ invariant.
\item[(R4)]
If $\alpha$, $\beta\in\Phi$, then $\langle \beta , \alpha \rangle \in \mathbb{Z}$.
\end{itemize}
\end{definition} 

The terminology comes from the fact that if $\mathfrak{g}$ is a complex semisimple Lie algebra, $\mathfrak{h} \subset \mathfrak{g}$ a Cartan algebra, and $\mathfrak{h}^*$ is endowed with the Killing bilinear form, then the nonzero weights $\Phi$ of the adjoint representation of $\mathfrak{g}$ form a root system.

In the following we use freely basic results about root systems and adhere to the standard terminology in e.g. \cite{Humphreys}. In particular, each root system defines a lattice $R \subset E$ generated by $\Phi$, the root lattice. Recall that a subset $\Delta$ of $\Phi$ is a basis of the root system if it is a basis of $E$ and each root can be written as an integral linear combination of elements of $\Delta$ with all coefficients nonnegative or nonpositive. The roots in $\Delta$ are then called simple roots. If $\{ \alpha_1, \dots , \alpha_l\}$ is a system of simple roots, the matrix $(\langle \alpha_i, \alpha_j \rangle )_{i,j}$ is the Cartan matrix of $\Phi$. It is not necessarily symmetric. It determines the root system up to isomorphism in the following sense: if $\Phi' \subset E'$ with base $\Delta' = \{ \alpha_1', \dots , \alpha_l' \}$ is another root system, and $\langle \alpha_i, \: \alpha_j \rangle = \langle \alpha_i', \alpha_j' \rangle$ for all $i, j$, then the bijection $\alpha\mapsto \alpha_i'$ extends uniquely to an isomorphism $\varphi$ of $E$ and $E'$ mapping $\Phi$ onto $\Phi'$ and with $\langle \varphi ( \alpha ), \varphi (\beta ) \rangle = \langle \alpha , \beta \rangle $ for $\alpha , \beta \in \Phi$.

The Coxeter graph of $\Phi$ is a graph having $l$ vertices, the $i$th joined to the $j$th ($i\neq j$) by $\langle \alpha_i, \alpha_j \rangle \langle \alpha_j, \alpha_i \rangle$ edges. It allows one to recover the Cartan matrix if all roots have the same lengths. Otherwise one passes to the Dynkin diagram by introducing an arrow pointing to the shorter of the two roots $\alpha_i$ or $\alpha_j$.

The Weyl group $W$ is the subgroup of the isometries of $E$ generated by all the reflections $\sigma_{\alpha }$.

The root system of type $E_8$ can then be described as follows (following \cite{Humphreys}, which is the notation most widely used). Take $E = \mathbb{R}^8$ with its standard inner product and standard basis $e_1, \dots , e_8$. Let $\Phi$ consist of the vectors
\[
\pm (e_i \pm e_j), \quad \frac{1}{2}\sum_{i=1}^8 (-1)^{k(i)} e_i
\]
where $k(i)=0$ or $1$ and the $k(i)$ add up to an even integer. If we choose as an ordered basis 

\begin{gather*}
\Delta= ( \alpha_1, \dots , \alpha_8)= \\
(\frac{1}{2}(e_1 + e_8 -(e_2+\dots + e_7)),  e_1+e_2,  e_2 -e_1, e_3-e_2, \\
  e_4-e_3,   e_5-e_4,  e_6 -e_5,  e_7-e_6 )\, ,
\end{gather*}
then the Cartan matrix becomes
\[
\left( \begin{array}{cccccccc}
2 & 0 & -1 & 0 & 0 & 0 & 0 & 0\\
0 & 2 & 0 & -1 & 0 & 0 & 0 & 0\\
-1 & 0 & 2 & -1& 0 & 0 & 0 & 0\\
0 & -1 & -1 & 2 & -1 & 0 & 0 & 0\\
0 & 0 & 0 & -1 & 2 & -1 & 0 & 0\\
0 & 0 & 0 & 0 & -1 & 2 & -1 & 0 \\
0 & 0 & 0 & 0 & 0 & -1 & 2 & -1 \\
0 & 0 & 0 & 0 & 0 & 0 & -1 & 2 
\end{array}\right)
\]
Here we have $(\alpha , \alpha ) =2$ for all roots. So the preceding matrix also gives the scalar product $(\cdot , \cdot )$ in the basis $\{ \alpha_1, \dots , \alpha_8 \}$. It agrees with the standard form given in \cite[Ex.\ I.2.7]{BPV}. Note that here $\alpha_2$ corresponds to the distinguished node in the Dynkin diagram.

The root lattice above endowed with the symmetric negative definite bilinear form given by minus the preceding Cartan matrix is called the $-E_8$-lattice. Somewhat by abuse of notation we will also talk of roots, simple roots etc. in this setting.

Having described the relevant lattices, we need one more general notion before we pass to geometry. 

\begin{definition}\xlabel{dSubsystem}
Let $\Phi$ be a root system. A subset $\Psi \subset \Phi$ is said to be closed if 
\begin{itemize}
\item[(C1)]
for all $\alpha$, $\beta \in \Psi$ we have $\sigma_{\alpha} (\beta ) \in \Psi$
\item[(C2)]
for $\alpha, \beta \in \Psi$ with $\alpha + \beta \in \Phi$, we have $\alpha + \beta \in \Psi$.
\end{itemize}
\end{definition}

See e.g. \cite{MT}, p. 104 for a discussion of this fundamental notion. It is clear that closed subsets are root systems in their own right in the appropriate Euclidean spaces. The algorithm of Borel-Siebenthal determines all closed subsystems of $\Phi$, see \cite{MT}, p. 108 ff. It proceeds as follows: form the extended Dynkin diagrams as in Table 13.1, p. 108 of \cite{MT}. For any proper subdiagram of the extended Dynkin diagram form the extended Dynkin diagram of each indecomposable part of that subdiagram and repeat the process. At any stage of the process, the set of nodes of the current diagram is a subset of $\Phi$ (the extra node corresponding to the highest root) and gives a basis of a closed subsystem. In fact all maximal closed subsystems are obtained in this way up to conjugation by $W$.

We now pass to geometry. Let $S$ be a degree $1$ del Pezzo surface, i.e. the blow-up of $\mathbb{P}^2$ in $8$ points in general position. The orthogonal to $-K_S$ in $N(S)$ is a lattice of type $-E_8$ as is the complement of $K_X$ in $N(X)$. We want to describe the roots in this geometric context. We denote by $k$ the canonical class in $N(S)$, by $r_1, \dots , r_8$ the classes of the $8$ exceptional divisors, and by $h$ the hyperplane class. Then
\[
-k = 3h - r_1 - \dots - r_8
\]
and putting
\begin{gather*}
\alpha_1 = r_1-r_2, \quad \alpha_2 = h - r_1-r_2-r_3, \quad \alpha_3 = r_2-r_3, \quad \alpha_i =r_{i-1}  -r_i, \; i\ge 4 \, 
\end{gather*}
we get simple roots with matrix the negative of the Cartan matrix above. Hence we have recovered the simple roots in this picture. 

\begin{prop}\xlabel{pGodeauxRoots}
For the Godeaux surface a set of simple roots can be identified as
\begin{gather*}
\alpha_1 = E^-_{0,4}-E^+_{4,4}, \\
\alpha_2 = E_{1,4}^+ -E_{2,4}^-, \\
\alpha_3 = E_{4,0}^+ -E_{3,0}^+, \\
\alpha_4 = E_{3,0}^+ -E_{2,0}^+, \\
\alpha_5 = E_{2,0}^+ -E_{1,0}^+, \\
\alpha_6= E_{1,0}^+ - E_{0,0}^-, \\
\alpha_7 = E_{0,2}^- - E_{0,4}^-, \\
\alpha_8= E_{0,3}^- - E_{0,0}^- .
\end{gather*}
\end{prop}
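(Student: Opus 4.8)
The plan is to prove Proposition~\ref{pGodeauxRoots} by a direct computation of the Gram matrix $(\alpha_i\cdot\alpha_j)_{1\le i,j\le 8}$ with respect to the intersection pairing, and to check that it coincides with the negative of the Cartan matrix displayed above. First I would observe that each $\alpha_i$ lies in the orthogonal complement $K_X^\perp\subset N(X)$: since every elliptic quintic $E^\pm_{i,j}$ is a degree $1$ divisor, i.e.\ $E^\pm_{i,j}\cdot K_X=1$, each difference $\alpha_i$ of two such curves satisfies $\alpha_i\cdot K_X=1-1=0$. Thus all eight vectors already live in the rank $8$ lattice $K_X^\perp$, which was noted at the beginning of this section to be of type $-E_8$.

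The core of the argument is the bilinear expansion of each product $\alpha_i\cdot\alpha_j$ into at most four intersection numbers of the basic elliptic curves, every one of which is read off from the three tables of Proposition~\ref{p:intersectionElliptic} (the $E^+\!\cdot E^+$, the $E^-\!\cdot E^-$, and the mixed $E^+\!\cdot E^-$ table), with all indices interpreted modulo $5$ (so, for instance, $j-l=4$ falls into the $\pm1$ column and $j-l=3$ into the $\pm2$ column). For example,
\[
\alpha_3\cdot\alpha_3=(E^+_{4,0})^2-2\,E^+_{4,0}\cdot E^+_{3,0}+(E^+_{3,0})^2=(-1)-2\cdot 0+(-1)=-2,
\]
and for an adjacent pair
\[
\alpha_1\cdot\alpha_3=E^-_{0,4}\cdot E^+_{4,0}-E^-_{0,4}\cdot E^+_{3,0}-E^+_{4,4}\cdot E^+_{4,0}+E^+_{4,4}\cdot E^+_{3,0}=1-1-0+1=1,
\]
while for a non-adjacent pair such as $\alpha_1,\alpha_2$ the four terms sum to $0$. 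Carrying this out for all $36$ pairs, I expect to find $\alpha_i\cdot\alpha_i=-2$ on the diagonal, $\alpha_i\cdot\alpha_j=1$ exactly when the nodes $i,j$ are joined in the $E_8$ Dynkin diagram encoded by the Cartan matrix above (the chain $\alpha_1-\alpha_3-\alpha_4-\alpha_5-\alpha_6-\alpha_7-\alpha_8$ with $\alpha_2$ attached to $\alpha_4$), and $0$ otherwise.

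Once the Gram matrix is seen to equal the negative of the displayed Cartan matrix, the remaining assertions are formal. That matrix is unimodular (its determinant is $1$), so the $\alpha_i$ are linearly independent and generate a rank $8$ unimodular sublattice of $K_X^\perp$; since $K_X^\perp\cong -E_8$ is itself unimodular of rank $8$, the sublattice is everything, and the $\alpha_i$ form a $\mathbb{Z}$-basis of $K_X^\perp$. As each $\alpha_i$ has self-intersection $-2$ it is a root, and because their pairwise products reproduce the $-E_8$ Cartan matrix, the uniqueness statement recalled after Definition~\ref{dRootSystem} (a Cartan matrix determines its root system up to isomorphism) shows that $(\alpha_1,\dots,\alpha_8)$ is a system of simple roots of type $E_8$. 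The main obstacle is purely bookkeeping: keeping track of the $\pm$ superscripts and of the residues $i-k,\ j-l \bmod 5$ so that each of the roughly $4\times 36$ elementary intersection numbers is taken from the correct cell of the correct table; a single misread residue (for instance confusing the $\pm1$ and $\pm2$ columns) would corrupt an entry, so I would cross-check the final matrix against the known shape of the $E_8$ diagram.
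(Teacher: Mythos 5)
Your proposal is correct and is essentially the paper's own proof: the paper simply verifies directly that the intersection matrix of these eight classes is the standard $-E_8$ matrix, delegating the arithmetic to a Macaulay2 script, whereas you carry out the same bilinear expansion by hand using the tables of Proposition~\ref{p:intersectionElliptic} (your sample entries $\alpha_3^2=-2$, $\alpha_1\cdot\alpha_3=1$, $\alpha_1\cdot\alpha_2=0$ are all correct, as is your reading of the Dynkin diagram with $\alpha_2$ attached to $\alpha_4$). The concluding lattice-theoretic remarks (unimodularity forcing the sublattice to be all of $K_X^\perp$, and the Cartan matrix determining the simple-root system up to isometry) are a harmless elaboration of what the paper leaves implicit.
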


\begin{proof}
One checks directly that the intersection matrix with respect to these vectors becomes the standard one of $-E_8$. See \cite{BBS} for a Macaulay 2 script doing this.
\end{proof}

\begin{remark}\xlabel{rHomeo}
The presence of $-E_8$ in both cases has a deeper geometric reason: $X$ is homeomorphic to the connected sum of a degree one del Pezzo surface with a rational homology sphere $\Sigma^4$, see \cite{HK}, p. 87. The latter is responsible for the torsion fundamental group. A Barlow surface, which is simply connected, is known to be homeomorphic to a degree one del Pezzo surface itself. 
\end{remark}

We produce a   numerically exceptional sequence of length $11$ in $N(X) \simeq \mathbf{1}\perp (-E_8)$. By this we mean a sequence 
\[
l_1, \dots , l_{11}
\]
of classes $l_i\in N(X)$ of line bundles on $X$ such that $\chi (l_j, l_i) =0$ for $j>i$. Note that because by Riemann-Roch
\[
\chi (l_j, l_i) = 0 \iff  (l_i-l_j)( l_i-l_j -K_X) = -2
\]
this is a purely lattice theoretic consideration. We would like to emphasize throughout the connection to the $E_8$-symmetry. We denote the generator of $\mathbf{1}$ again by $k$.

\begin{prop}\xlabel{pExtended}
Consider the vectors
\begin{gather*}
\alpha_9 = K - E^+_{0,2}, \\
\alpha_{10} = E^-_{0,4} - E^-_{0,1}
\end{gather*}
Then also $\alpha_{10},  \alpha_9, \alpha_8, \alpha_7, \alpha_6, \alpha_5, \alpha_4, \alpha_3$ is an ordered basis of the root system of type $E_8$ with respect to which the intersection matrix is of the standard type $-E_8$ above (so $\alpha_9$ is the distinguished node in this numbering).
\end{prop}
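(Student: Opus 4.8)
The plan is to reduce the statement to a finite intersection-number computation. First I would observe that all eight vectors $\alpha_3, \dots , \alpha_{10}$ lie in the sublattice $K^\perp \subset N(X)$: each $\alpha_i$ with $3\le i\le 8$ is a difference of two degree $1$ elliptic curves and hence orthogonal to $K$, while $\alpha_9\cdot K = K^2 - E^+_{0,2}\cdot K = 1-1 = 0$ and $\alpha_{10}\cdot K = E^-_{0,4}\cdot K - E^-_{0,1}\cdot K = 1-1 = 0$. Since $N(X)\simeq \mathbf{1}\perp(-E_8)$ with $\mathbf{1} = \IZ K$ and $K^2 = 1$, writing $v = aK + w$ with $w\in -E_8$ shows $v\cdot K = a$, so the orthogonal complement $K^\perp$ is exactly the $-E_8$ summand. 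Thus it suffices to compute the Gram matrix of the eight vectors with respect to the intersection pairing and to match it against the standard $-E_8$ Cartan matrix (self-intersections $-2$, adjacent roots meeting in $+1$).

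Next I would carry out the intersection computations, using $K^2 = 1$, the fact that $K\cdot E^\pm_{ij} = 1$ (every $E^\pm_{ij}$ has degree $1$), and the pairing tables of Proposition \ref{p:intersectionElliptic}. The entries among $\alpha_3, \dots , \alpha_8$ are already fixed by Proposition \ref{pGodeauxRoots}: these form the chain $\alpha_3 - \alpha_4 - \alpha_5 - \alpha_6 - \alpha_7 - \alpha_8$. It then remains to verify the self-intersections $\alpha_9^2 = \alpha_{10}^2 = -2$, the vanishing $\alpha_9\cdot \alpha_{10} = 0$, and the mixed pairings with $\alpha_3, \dots, \alpha_8$. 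A short calculation gives $\alpha_9\cdot \alpha_7 = 1$ and $\alpha_9\cdot \alpha_j = 0$ for $j\in\{3,4,5,6,8\}$, and symmetrically $\alpha_{10}\cdot \alpha_8 = 1$ with $\alpha_{10}\cdot \alpha_j = 0$ for $j\in\{3,4,5,6,7\}$.

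Assembling these numbers, the new diagram is of type $E_8$ with branch node $\alpha_7$, long arm $\alpha_7 - \alpha_6 - \alpha_5 - \alpha_4 - \alpha_3$, medium arm $\alpha_7 - \alpha_8 - \alpha_{10}$, and short arm $\alpha_7 - \alpha_9$. Under the relabeling of $(\alpha_{10}, \alpha_9, \alpha_8, \alpha_7, \alpha_6, \alpha_5, \alpha_4, \alpha_3)$ as $(\beta_1, \dots, \beta_8)$, the branch sits at $\beta_4 = \alpha_7$, the medium arm is $\beta_1 - \beta_3$, and $\alpha_9 = \beta_2$ occupies the distinguished position, exactly as claimed. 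Finally, since the resulting Gram matrix equals the nondegenerate $-E_8$ Cartan matrix (determinant $\pm 1$), the sublattice generated by $\alpha_3, \dots, \alpha_{10}$ has the same discriminant as the full unimodular $-E_8$ lattice; as a finite-index sublattice satisfies $\det = [\,-E_8 : L\,]^2$, the index is $1$ and the two lattices coincide. Hence the eight vectors form a $\IZ$-basis of $-E_8$ and, having Gram matrix the Cartan matrix, constitute a system of simple roots.

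The computation presents no genuine obstacle; it is a finite check and can be automated (cf.\ the Macaulay2 scripts of \cite{BBS}). The one point requiring care is that the off-diagonal pattern lands precisely on the $E_8$ adjacencies rather than, say, those of $D_8$ or $A_8$: the decisive inputs are that $\alpha_9$ meets exactly one of the $\alpha_3, \dots, \alpha_8$, namely $\alpha_7$, making it a trivalent node, and that $\alpha_{10}$ attaches only at the free end $\alpha_8$, so that the three arms acquire the lengths $4, 2, 1$ characteristic of $E_8$.
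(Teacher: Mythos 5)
Your proposal is correct and takes essentially the same approach as the paper: the paper's proof is precisely the finite Gram-matrix verification for the ordered set $(\alpha_{10},\alpha_9,\alpha_8,\alpha_7,\alpha_6,\alpha_5,\alpha_4,\alpha_3)$, delegated to a Macaulay2 script \cite{BBS}, and your by-hand computation via Proposition \ref{p:intersectionElliptic} together with $K\cdot E^{\pm}_{i,j}=1$, $K^2=1$ reproduces it, with all your stated pairings (e.g.\ $\alpha_9\cdot\alpha_7=1$, $\alpha_{10}\cdot\alpha_8=1$, $\alpha_9\cdot\alpha_{10}=0$, the rest vanishing) checking out against the tables. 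Your closing unimodularity argument merely makes explicit a point the paper leaves implicit, so there is nothing to add.
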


\begin{proof}
This can be checked by Macaulay 2, see \cite{BBS}.
\end{proof}

Moreover,
$\alpha_{10}, \alpha_8, \alpha_7, \alpha_6, \alpha_5, \alpha_4, \alpha_3, \alpha_1$ is a basis in a subsystem of type $A_8$. It is therefore reasonable to introduce the following new notation to do justice to the situation:
\begin{gather*}
A_1 =\alpha_1, A_2= \alpha_3, A_3 = \alpha_4, A_4=\alpha_5, A_5 = \alpha_6, A_6 = \alpha_7, A_7 = \alpha_8, A_8= \alpha_{10}, \\
B_1 = \alpha_2 , B_2= \alpha_{9}
\end{gather*}
and to represent this graphically as follows:
\vspace{1cm}
\begin{center}
\setlength{\unitlength}{1cm}
\begin{picture}(8,1)
\put(0,0){$\bullet$}
\put(1,0){$\bullet$}
\put(2,0){$\bullet$}
\put(3,0){$\bullet$}
\put(4,0){$\bullet$}
\put(5,0){$\bullet$}
\put(6,0){$\bullet$}
\put(7,0){$\bullet$}

\put(2,1){$\bullet$}
\put(5,1){$\bullet$}

\put(0.1,0.1){\line(1,0){1}}
\put(1.1,0.1){\line(1,0){1}}
\put(2.1,0.1){\line(1,0){1}}
\put(3.1,0.1){\line(1,0){1}}
\put(4.1,0.1){\line(1,0){1}}
\put(5.1,0.1){\line(1,0){1}}
\put(6.1,0.1){\line(1,0){1}}

\put(2.1,0){\line(0,1){1}}
\put(5.1,0){\line(0,1){1}}

\put(0,-0.5){$A_1$}
\put(1,-0.5){$A_2$}
\put(2,-0.5){$A_3$}
\put(3,-0.5){$A_4$}
\put(4,-0.5){$A_5$}
\put(5,-0.5){$A_6$}
\put(6,-0.5){$A_7$}
\put(7,-0.5){$A_8$}

\put(1.5, 1){$B_1$}
\put(5.2,1){$B_2$}

\dashline{0.2}(2.1,1.1)(5.1,1.1)
\put(3.4,1.3){$-1$}
\end{picture}
\end{center}
\vspace{1cm}

Note that here every node has self-intersection $-2$ and different nodes have intersection zero unless they are joined by an edge in which case their intersection is $1$: the only exception is $B_1$ and $B_2$ where it may be checked directly that $B_1\cdot B_2 = -1$.

\begin{prop}\xlabel{propNumerics}
The sequence
\begin{align*}
A_1,\\
A_1+A_2,\\
k - B_1,\\
A_1+A_2+A_3, \\
A_1+A_2+A_3+A_4, \\
A_1+A_2+A_3+A_4+A_5, \\
k-B_2,\\
A_1+A_2+A_3+A_4+A_5+A_6, \\
A_1+A_2+A_3+A_4+A_5+A_6+A_7, \\
A_1+A_2+A_3+A_4+A_5+A_6+A_7+A_8, \\
\mathcal{O}
\end{align*}
is numerically exceptional of length $11$. 
\end{prop}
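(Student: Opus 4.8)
The plan is to reduce everything to the lattice criterion recorded just before the statement: for classes of line bundles, $\chi(l_j,l_i)=0$ if and only if $(l_i-l_j)(l_i-l_j-K_X)=-2$. (The diagonal values $\chi(l_i,l_i)=\chi(\ko_X)=1$ are automatic, since each $l_i$ is a line bundle on a surface with $p_g=q=0$, so only the off-diagonal conditions for $j>i$ need checking.) Writing $k:=K_X$ for the generator of the $\mathbf 1$-summand, so that $k^2=1$ and $k$ is orthogonal to the $-E_8$-lattice, every class decomposes uniquely as $v+ck$ with $v\in -E_8$ and $c\in\IZ$. For a difference $d=l_i-l_j=v+ck$ one then has
\[
d\cdot(d-k)=v^2+c^2-c,
\]
so the required value $-2$ translates into $v^2=-2$ when $c\in\{0,1\}$ and $v^2=-4$ when $c=-1$. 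Since only $l_3=k-B_1$ and $l_7=k-B_2$ carry a $k$-term, for any pair $i<j$ the integer $c$ lies in $\{-1,0,1\}$, and these are the only three regimes to analyse.

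Next I would record the elementary intersection data visible in the diagram: every node squares to $-2$, neighbouring nodes of the $A_8$-chain meet in $+1$, the branch node $B_1$ meets only $A_3$ and $B_2$ meets only $A_6$ (each in $+1$), and the sole anomalous value is $B_1\cdot B_2=-1$. From this one extracts the workhorse identity that any connected run has square
\[
(A_p+\dots+A_q)^2=-2(q-p+1)+2(q-p)=-2,
\]
i.e.\ it is a root. Abbreviating $S_m:=A_1+\dots+A_m$ (with $S_0=0=\ko$), the members of the sequence other than $l_3,l_7$ are exactly $S_1,\dots,S_8$ together with the terminal $S_0$, so the difference of any two of them is either $S_a$ or $-(A_{a+1}+\dots+A_b)$, hence a root of square $-2$. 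With $c=0$ this settles every pair of ``partial-sum type'' at once.

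It remains to handle the pairs in which $l_3$ or $l_7$ appears. If $l_i=k-B_1$ is paired with a later $S_b$ (so $b\ge 3$, or $b=0$) then $c=1$ and $v=-(B_1+S_b)$, and $(B_1+S_b)^2=-2+2(B_1\cdot S_b)-2=-2$ since $B_1\cdot S_b=1$ for $b\ge 3$ (while $v=-B_1$ of square $-2$ if $b=0$); the case of $l_7$ paired with $S_b$, $b\ge 6$, is identical. If instead an earlier $S_a$ is paired with $l_j=k-B_1$ then $c=-1$ and $v=S_a+B_1$, for which $(S_a+B_1)^2=-4$ \emph{precisely because} $S_a\cdot B_1=0$; this orthogonality holds because the only partial sums preceding $k-B_1$ in the list are $S_1,S_2$, which stop short of the node $A_3$ carrying $B_1$. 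The mirror statement $S_a\cdot B_2=0$ for $a\le 5$ disposes of the pairs ending in $l_7$. Finally the single pair $(l_3,l_7)$ gives $d=B_2-B_1$ with $c=0$ and $(B_2-B_1)^2=-2-2(-1)-2=-2$, where the anomalous intersection $B_1\cdot B_2=-1$ is exactly what makes $B_2-B_1$ a root.

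The real content is not any individual three-term expansion but the placement of the two classes $k-B_1$ and $k-B_2$, namely directly after $S_2$ and after $S_5$. This interleaving is what simultaneously forces $S_a\cdot B_1=0$ for the preceding partial sums (needed in the $c=-1$ regime) and $B_1\cdot S_b=1$ for the following ones (needed in the $c=1$ regime), and likewise for $B_2$; the whole sequence is engineered around the two branch nodes of the diagram. I would therefore organise the write-up as the three families above so that this design is transparent, and, as the authors do, confirm the finitely many resulting integers with a short Macaulay2 check.
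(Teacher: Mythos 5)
Your proposal is correct and takes essentially the same route as the paper: both reduce each condition via Riemann--Roch to the lattice criterion in $\mathbf{1}\perp(-E_8)$ and then verify it using that connected runs $A_p+\dots+A_q$ and the $B_i$ are roots, together with the diagram intersections (including $B_1\cdot B_2=-1$), the paper's displayed computation of $\chi(k-B_2,A_1+\dots+A_j)$ being exactly your $c=-1$ regime with $(S_a+B_2)^2=-4$. Your bookkeeping by the $k$-coefficient $c\in\{-1,0,1\}$ is a tidier organization of the same checks and has the minor merit of explicitly covering the pairs $\chi(S_b,\,k-B_i)$ with $S_b$ coming after $k-B_i$ (your $c=1$ regime), which the paper's proof only handles implicitly.
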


In fact, the exceptional sequence we construct later - when twisted by $\mathcal{O}_X (-E_{4,4}^+ + E_{0,4}^-)$ - has exactly this numerical behaviour. 

\begin{proof}
Note that $k\cdot A_i = k\cdot B_j =0$ for all $i$ and $j$. For a class $m$ with $m\cdot k =0$ the condition $\chi (\mathcal{O}, m) =0$ is equivalent to $m^2 = -2$. Moreover, $\chi (k-B_i) = \chi (B_i)$. Hence we see that $\mathcal{O}$ is orthogonal to all the members of the sequence preceding it because the $A_{i_0} + A_{i_0+1} + \dots + A_{i_0 +t}$ are all roots and so are the $B_i$. 

Now the sequence 
\[
A_1, A_1+A_2, \dots , A_1 + \dots + A_8
\]
is completely orthogonal: this follows again because all the $A_{i_0} + A_{i_0+1} + \dots + A_{i_0 +t}$ and their negatives are roots. 

Hence it suffices to show that $k-B_1$ is numerically semi-orthogonal to all the terms preceding it, and that the same is true for $k-B_2$. We have for $j\le 5$ 
\begin{gather*}
\chi (k-B_2, A_1+ \dots +A_j) = \\
\frac{1}{2}(A_1+ \dots + A_j + B_2 -k)\cdot (A_1+ \dots + A_j + B_2 -2k) +1 =0
\end{gather*}
and the calculation for $k-B_1$ is analogous.
Finally, we have $\chi (k-B_2, k-B_1) = \chi (B_2 -B_1) = 0$, too.
\end{proof}

Note that the Weyl group acts on the set of numerically exceptional sequences of a given length. 

\section{Campedelli Model}\xlabel{sTower}

The Campedelli construction of the Godeaux surface $X$ roughly consists in realizing $X$ as the double cover of $\mathbb{P}^1\times \mathbb{P}^1$ branched in a union of five fibers of the second projection $\mathbb{P}^1\times\mathbb{P}^1 \to \mathbb{P}^1$ and a curve of bidegree $(6,7)$ which has $10$ triple points on the preceding fibers, $2$ on each fiber (one still has to desingularize the resulting cover and contract some $(-1)$-curves arising as the strict transforms of the fibers afterwards). Extensive background can be found in \cite{Reid}, p.\ 312 ff. Here we will review the aspects of this construction relevant for us later, and complement it with some extra facts which we will need. We start with the explicit description of the set of $10$ points in $\mathbb{P}^1\times \mathbb{P}^1$.
\smallskip

Following Reid \cite{Reid}, p.\ 315 ff., we consider the following
$\Group$-invariant polynomials.

\begin{align*}
	\phi_1 &= x_1^2x_3+x_4^2x_2\\
	\phi_2 &= x_1x_2^2+x_4x_3^2\\
	\psi_1 &= x_1^3x_2-x_4^3x_3\\
	\psi_2 &= x_1x_3^3-x_4x_2^3
\end{align*}

Here $\beta$ operates with character $+1$ on the $\phi_i$ and with character $-1$ on the
$\psi_i$. 
These polynomials represent sections
\begin{align*}
	H^0(3K-L_0^0) &= \langle \phi_1,\phi_2 \rangle \\
	H^0(4K-L_0^0) &= \langle \psi_1,\psi_2 \rangle
\end{align*}
on the Godeaux surface and define a generically $2:1$ rational map
\[
	(\psi,\phi) \colon \Godeaux \dashrightarrow \IP^1 \times \IP^1
\]
and $\beta$ exchanges the preimages of a general point in $\IP^1 \times \IP^1$.
The
subgroup $\betacom \subset \autGodeaux$ that commutes with $\beta$ is generated by $\delta$ and $\alpha$.
We obtain the following operation on $\IP^1 \times  \IP^1$
\begin{align*}
	\delta(\phi_1:\phi_2) &= (\xi \phi_1:\xi^4 \phi_2) \\
	\delta(\psi_1:\psi_2) &= (\xi^2 \psi_1:\xi^3 \psi_2) \\
	\alpha(\phi_1:\phi_2) &= (\phi_2:\phi_1) \\
	\alpha(\psi_1:\psi_2) &= (\psi_2:- \psi_1).
\end{align*}

Consider  the 10 points on $\IP^1 \times \IP^1$ that are defined by the following ideals
\[
	I^\pm_a = (\psi_1\mp \golden^{\pm1} \xi^{a}\psi_2,\phi_1+\xi^{3a}\phi_2)
\]
with $a \in \IZ / 5\IZ$ and $\golden := - \xi^3- \xi^2$ the golden section.
The operation of $\betacom$ on this set of ideals is transitive and given by
\begin{align*}
	\delta(I^\pm_a) &= I^\pm_{a+1} \\
	\alpha (I^\pm_a) &= I^\mp_{-a}
\end{align*}
We also denote by $p^\pm_a = V(I^\pm_a) \in \IP^1 \times \IP^1$ the points defined
by the ideals above.

\begin{prop}
The preimage of $p^\pm_i$ under $(\psi,\phi)$ is $E^\pm_{i,0}$, which we denote by $E^\pm_i$ in this section.
\end{prop}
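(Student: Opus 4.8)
The plan is to prove the two inclusions separately, after first collapsing the ten cases into one by equivariance. The map $(\psi,\phi)$ is equivariant for $\betacom=\langle\delta,\alpha\rangle$, which acts compatibly on both sides: on $\IP^1\times\IP^1$ one has $\delta(p^\pm_a)=p^\pm_{a+1}$ and $\alpha(p^\pm_a)=p^\mp_{-a}$ (from the displayed action on the $I^\pm_a$), while on Reid's parameter space a direct substitution into the $\autGodeaux$-action formula gives $\delta(e^\pm_{i,0})=e^\pm_{i+1,0}$ and shows $\alpha$ interchanges the two signs. Hence $\delta^i$ transports the case $i=0$ to arbitrary $i$, and $\alpha$ transports the $+$ case to the $-$ case, so it suffices to prove $(\psi,\phi)^{-1}(p^+_0)=E^+_{0,0}$.

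For the inclusion $E^+_{0,0}\subseteq(\psi,\phi)^{-1}(p^+_0)$ I would check that the two linear conditions defining $p^+_0$ pull back to functions vanishing on $E^+_{0,0}$. Since $I^+_0=(\psi_1-\golden\psi_2,\ \phi_1+\phi_2)$, this is exactly the pair of ideal memberships
\[
\phi_1+\phi_2\in I(E^+_{0,0}),\qquad \psi_1-\golden\psi_2\in I(E^+_{0,0}),
\]
where by Theorem \ref{t:ellipticReid} the ideal $I(E^+_{0,0})$ is generated by the cubics $R_0,\dots,R_4$ evaluated at the parameters $e^+_{0,0}=(1:-1:1:-1,\golden:1)$ of Proposition \ref{p:coeffElliptic}. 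The first membership is in fact an identity: at these parameters one has $\phi_1+\phi_2=R_0$ on the nose. The second is a quartic, so one reduces it modulo a Gröbner basis of $I(E^+_{0,0})$ (equivalently, expresses it in the degree-$4$ part spanned by the $x_kR_m$); this is the finite computation recorded in \cite{BBS}. These two vanishings say precisely that $\psi_1/\psi_2$ and $\phi_1/\phi_2$ are constant on $E^+_{0,0}$ with the values cutting out $p^+_0$, i.e. that $(\psi,\phi)$ contracts $E^+_{0,0}$ to $p^+_0$. This is consistent with the intersection table, by which $E^+_{0,0}$ is a $\beta$-invariant $(-1)$-curve.

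For the reverse inclusion I would show that no further component appears in the fibre. The generic fibre of the degree-$2$ map $(\psi,\phi)$ is the pair of points exchanged by $\beta$, so a curve can occur in a fibre only if it is contracted, and the positive-dimensional fibres lie over finitely many points. Because $\phi_1+\phi_2=R_0$ is already a generator of $I(E^+_{0,0})$, the preimage of $p^+_0$ on the Fermat surface $\Fermat$ is $V(R_0,\ \psi_1-\golden\psi_2)\cap\Fermat$, which visibly contains the invariant elliptic quintic $E=p^{-1}(E^+_{0,0})$. Saturating the ideal $(R_0,\ \psi_1-\golden\psi_2,\ x_1^5+\dots+x_4^5)$ with respect to the irrelevant ideal and comparing with the ideal of $E$ from Theorem \ref{t:ellipticReid} shows its one-dimensional part is exactly $E$, the residual intersection being $0$-dimensional; passing to the $\Group$-quotient yields $(\psi,\phi)^{-1}(p^+_0)=E^+_{0,0}$, and equivariance finishes the proof.

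I expect the genuine obstacle to be this last equality rather than the containment. The containment is a short ideal-membership check, but ruling out extra contracted curves is where a real Gröbner-basis/saturation computation is unavoidable: the naive intersection count on $\Godeaux$, $(3K-L^0_0)\cdot(4K-L^0_0)=2$, only reproduces the generic fibre and the base points of the two pencils, and does not by itself separate the contracted curve $E^+_{0,0}$ from the residual zero-dimensional locus.
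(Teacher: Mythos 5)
Your overall strategy coincides with the paper's: the paper's proof is exactly (i) a Macaulay2 verification \cite{BBS} that the two hypersurfaces $\phi_1+\phi_2=0$ and $\psi_1-\golden\psi_2=0$ cut out the relevant locus for $p^+_0$, followed by (ii) transitivity of the $\betacom$-action, just as in your first and second paragraphs. Your equivariance bookkeeping is correct ($\delta$ shifts the first index on both sides, $\alpha$ swaps signs), and so is the pleasant identity $\phi_1+\phi_2=R_0$ at the parameters $(1:-1:1:-1,\golden:1)$.

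The genuine flaw is in your reverse-inclusion step, where you assert that saturating $(R_0,\ \psi_1-\golden\psi_2,\ x_1^5+\dots+x_4^5)$ and comparing with $I(E)$ ``shows its one-dimensional part is exactly $E$, the residual intersection being $0$-dimensional.'' That is false as stated: the five-line orbit $p^{-1}(L^0_0)$ also lies in this intersection. Indeed, on the line $\{[s:t:-t:-s]\}$ one computes $\phi_1=s^2(-t)+s^2t=0$, $\phi_2=st^2-st^2=0$, $\psi_1=s^3t-s^3t=0$, $\psi_2=-st^3+st^3=0$, and by $\Group$-invariance all four forms vanish on the whole orbit. This is no accident: $\phi_1,\phi_2$ span $H^0(3K-L^0_0)$ and $\psi_1,\psi_2$ span $H^0(4K-L^0_0)$, so $L^0_0$ is the common base curve of both pencils and \emph{every} member of each pencil contains $p^{-1}(L^0_0)$. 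Hence the one-dimensional part of $V(R_0,\psi_1-\golden\psi_2)\cap\Fermat$ is $p^{-1}(L^0_0)\cup E$, not $E$ alone, and the comparison you propose would come out negative. The paper's own formulation records precisely this: the computation checks that the two hypersurfaces cut out ``the preimage of $L^0_0$ \emph{and} $E^+_{0,0}$.'' The repair is easy but must be made explicit: the rational map $(\psi,\phi)$ is undefined along $L^0_0$ (all four sections vanish there), so the base component must be removed — e.g.\ by additionally saturating with respect to the ideal of $p^{-1}(L^0_0)$ — before one can identify the fibre over $p^+_0$ with the residual curve $E^+_{0,0}$. Your closing remark that the count $(3K-L^0_0)\cdot(4K-L^0_0)=2$ is computed for classes with $L^0_0$ already subtracted is in fact a hint of the same point: on $\Fermat$ the raw cubic–quartic intersection carries the line orbit as an extra component.
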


\begin{proof}
On can check, see \cite{BBS}, for $p^+_0$ that the hypersurfaces $\phi_1+\phi_2=0$
and $\psi_1- \golden \psi_2=0$ cut out the preimage of $L_0^0$ and $E^+_{0,0}$. The rest follows because
$B$ acts transitively as indicated above.
\end{proof}

We now explain a tower of maps and a geometric set-up that facilitates certain computations with the Campedelli model. We will give some examples of this below which will be of relevance later.

We introduce the following notation. (See the following picture.)

\begin{enumerate}

\item[(A)]
Let $S:= \mathbb{P}^1 \times\mathbb{P}^1$ be the Hirzebruch surface with projections $\pi_1$ and $\pi_2$ to the first and second factors, and let $C$, $F_1, \dots ,F_5$ be the arrangement of divisors in $S$ described in \cite{Reid}; thus $F_1, \dots ,F_5$ are five fibers of the projection $\pi_2$ and $C$ is a curve of bidegree $(6,7)$ having two triple points $p_i^+$ and $p_i^-$ on each fiber $F_i$.
\item[(B)]
Let $\hat{\sigma}\colon \hat{S} \to S$ be the blow-up of $S$ in the ten points $p_i^+$, $p_i^-$ and let $\hat{P}_i^+ = \hat{\sigma}^{-1} (p_i^+)$, $\hat{P}_i^- = \hat{\sigma }^{-1}(p_i^-)$ be the exceptional divisors. Let $\hat{F}_i\subset \hat{S}$ be the strict transform of $F_i$, and let $\hat{C}\subset \hat{S}$ be the strict transform of $C$. Then $\hat{C}$ intersects each of $\hat{P}_i^+$ and $\hat{P}_i^-$ transversely in three points, and $\hat{F}_i \simeq \mathbb{P}^1$, $\hat{F}_i^2 = -2$ because
$(\hat{F}_i + \hat{P}_i^+ +\hat{P}_i^-)^2 = F_i^2 = 0$.
\item[(C)]
Let $\tilde{X}$ be the double cover of $\hat{S}$ ramified in $\hat{C} \cup \hat{F}_1 \cup \dots \cup \hat{F}_5$ with map $\eta\colon \tilde{X} \to \hat{S}$. The preimages of $\hat{P}^+_i$ resp. $\hat{P}^-_i$ in $\tilde{X}$ are elliptic curves $\tilde{E}_i^+$ resp. $\tilde{E}_i^-$ joined together by the preimage $\tilde{F}_i$ of $\hat{F}_i$ which is a $(-1)$-curve on $\tilde{X}$. Here, $\tilde{F}_i$ is the reduced divisor associated to $\eta^{-1}(\hat{F}_i)$, hence, because the covering is branched, we have equality of $\mathbb{Q}$-divisors $\eta^* ((1/2) \hat{F}_i) = \tilde{F}_i$, and $\tilde{F}_i^2 = (1/4)\cdot \hat{F}_i^2 \cdot \mathrm{deg} (\eta ) =-1$. We have $(\tilde{E}_i^{\pm })^2 = -2$. Indeed, $\eta^* (\hat{P}^{\pm }_i) = \tilde{E}_i^{\pm }$ and $\tilde{E}_i^2 = \deg (\eta ) \cdot (\hat{P}_i^{\pm })^2= -2$.
\item[(D)]
Let $\tilde{\sigma } \, :\, \tilde{X} \to X$ be the contraction of the five $(-1)$-curves $\tilde{F}_1, \dots ,\tilde{F}_5$ to points $q_1, \dots ,q_5$ on $X$. The images of the $\tilde{E}_i^+$ resp. $\tilde{E}_i^-$ on $X$ are denoted $E_i^+$ resp. $E_i^-$ and are elliptic curves meeting in a point $q_i$. Here we have $(E_i^{\pm })^2 = -1$, for $(E_i^{\pm })^2= \sigma^* (E_i^{\pm})^2 = (\tilde{E}_i^{\pm} + \tilde{F}_i)^2 = -2 +2\cdot 1 -1 = -1$. Then $X$ is the Godeaux surface. 
\end{enumerate}
\smallskip

\begin{center}
\includegraphics[height=90mm]{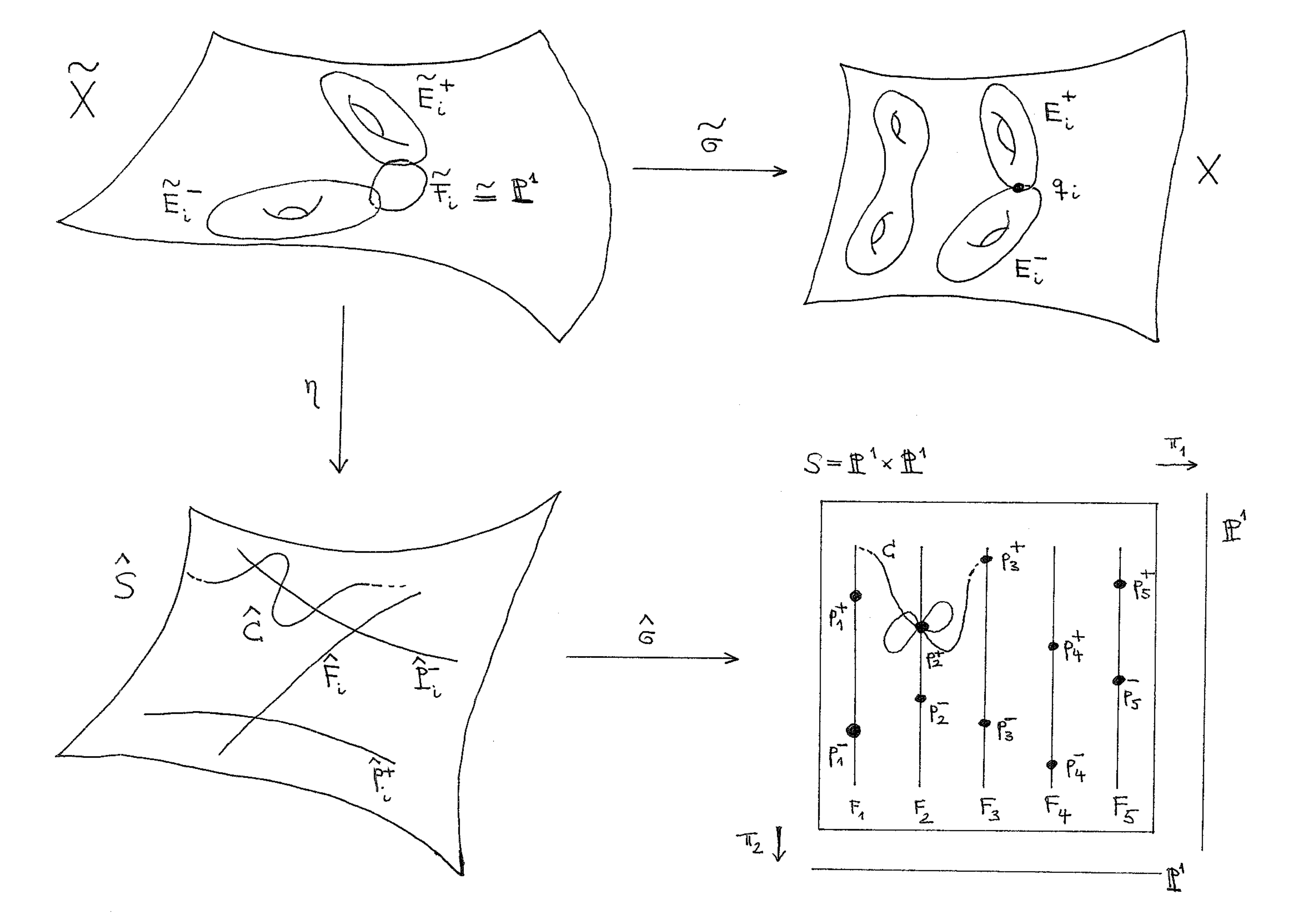}
\end{center}
\smallskip

Next we list some useful formulas pertaining to this set-up.

\begin{lem}
\begin{itemize}
\item[(1)]
We have
\[
p_{\ast } \mathcal{O}_Y = \mathcal{O}_X \oplus \bigoplus_{\tau \in (\Group-\{ 0\})} \mathcal{O}_{\tau }, 
\]
where $\tau$ runs over the nontrivial elements in $\mathrm{Pic}(X)_{\mathrm{tors}}$ and $\mathcal{O}_{\tau}$ is a representative of the isomorphism class of $\tau$.
\item[(2)]
The direct image $\eta_{\ast}\mathcal{O}_{\tilde{X}}$ is calculated as
\[
 \eta_{\ast}\mathcal{O}_{\tilde{X}} = \mathcal{O}_{\hat{S}} \oplus \mathcal{O}_{\hat{S}}(-1/2(\hat{C} + \sum \hat{F}_i))    .
\]
\item[(3)]
For the respective canonical classes we have
\[
K_{\tilde{X}} = \eta^* \left( K_{\hat{S}} + \frac{1}{2}(\hat{C} + \sum \hat{F}_i)\right)     
\]
and
\[
K_{\hat{S}} + \frac{1}{2} (\hat{C} + \sum \hat{F}_i )= \hat{\sigma }^* \left( K_{S} + \frac{1}{2}(C + \sum F_i )  \right) - \sum (\hat{P}^+_i + \hat{P}^-_i) \, .
\]
\item[(4)]
Finally, what will sometimes be useful in connection with the projection formula is that
\[
\tilde{\sigma }_{\ast} \left( \mathcal{O}_{\tilde{X}}(\sum n_k\tilde{F}_k)\right) = \mathcal{O}_X   
\]
if all $n_k \ge 0$. It shows that $H^0 (\tilde{X}, \tilde{\sigma }^*\mathcal{L}) = H^0 (\tilde{X}, \tilde{\sigma }^*\mathcal{L} \otimes \mathcal{O}_{\tilde{X}}(\sum n_k\tilde{F}_k))$ for a line bundle $\mathcal{L}$ on $X$. 
\end{itemize}
\end{lem}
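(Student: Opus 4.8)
The plan is to establish the four assertions separately; each is a standard computation in the theory of finite covers and of blow-ups of surfaces, and the only steps needing real care are the multiplicity bookkeeping in (3) and a local vanishing in (4).

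For (1), the key point is that the $\Group$-action on $Y$ is fixed-point free: a fixed point would force three of the four coordinates to vanish, i.e. a coordinate point, and these do not lie on the Fermat quintic. Hence $p\colon Y \to X$ is an \'etale Galois cover of degree $5$ with cyclic group $\Group$, so $p_{\ast}\mathcal{O}_{Y}$ is locally free of rank $5$ and decomposes into its character eigensheaves $\bigoplus_{\chi}\mathcal{L}_{\chi}$, each a line bundle. Since the cover is an \'etale torsor one has $\mathcal{L}_{\chi^i}\cong\mathcal{L}_{\chi}^{\otimes i}$, with the trivial character giving $\mathcal{O}_{X}$ and $\mathcal{L}_{\chi}^{\otimes 5}\cong\mathcal{O}_{X}$; because $Y$ is connected, $\mathcal{L}_{\chi}$ has order exactly $5$, so the five eigensheaves run through all classes of $\Pic(X)_{\mathrm{tors}}\cong\Group$, which is (1).

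For (2) and the first formula of (3) I would use the standard structure of a double cover. Writing $B=\hat{C}+\sum\hat{F}_i$ for the branch divisor and $L$ for the line bundle on $\hat{S}$ with $L^{\otimes 2}\cong\mathcal{O}_{\hat{S}}(B)$ defining $\eta$ (whose existence is part of the data of the cover), one has $\eta_{\ast}\mathcal{O}_{\tilde{X}}=\mathcal{O}_{\hat{S}}\oplus L^{-1}$ with $L=\mathcal{O}_{\hat{S}}(\tfrac12 B)$, which is exactly (2), while the Hurwitz formula gives $K_{\tilde{X}}=\eta^{\ast}(K_{\hat{S}}+L)=\eta^{\ast}(K_{\hat{S}}+\tfrac12 B)$, the first identity of (3). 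The second identity of (3) is a direct computation: from $K_{\hat{S}}=\hat{\sigma}^{\ast}K_S+\sum(\hat{P}_i^{+}+\hat{P}_i^{-})$ together with the strict-transform formulas $\hat{C}=\hat{\sigma}^{\ast}C-3\sum(\hat{P}_i^{+}+\hat{P}_i^{-})$ (as $C$ has a triple point at each $p_i^{\pm}$) and $\hat{F}_i=\hat{\sigma}^{\ast}F_i-\hat{P}_i^{+}-\hat{P}_i^{-}$ (as the smooth fibre $F_i$ passes simply through both $p_i^{+}$ and $p_i^{-}$), one substitutes into $K_{\hat{S}}+\tfrac12(\hat{C}+\sum\hat{F}_i)$ and collects the coefficient of $\sum(\hat{P}_i^{+}+\hat{P}_i^{-})$, which equals $1-\tfrac32-\tfrac12=-1$, yielding the stated formula. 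This multiplicity count is the step in which a sign or factor error is easiest to make.

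For (4) the five $\tilde{F}_k$ are disjoint $(-1)$-curves (preimages of strict transforms of distinct, hence disjoint, fibres) contracted to distinct smooth points, so I would argue one curve at a time, equivalently by factoring $\tilde{\sigma}$ into blow-downs. For a single $(-1)$-curve $F$ with $F^2=-1$, the restriction sequence $0\longrightarrow\mathcal{O}((n-1)F)\longrightarrow\mathcal{O}(nF)\longrightarrow\mathcal{O}(nF)|_F\longrightarrow 0$, the identification $\mathcal{O}(nF)|_F\cong\mathcal{O}_{\IP^1}(-n)$, and the vanishing $H^0(\IP^1,\mathcal{O}(-n))=0$ for $n\ge 1$ give, after applying $\tilde{\sigma}_{\ast}$, that $\tilde{\sigma}_{\ast}\mathcal{O}(nF)=\tilde{\sigma}_{\ast}\mathcal{O}((n-1)F)$; descending induction to $n=0$, combined with $\tilde{\sigma}_{\ast}\mathcal{O}_{\tilde{X}}=\mathcal{O}_{X}$ (Zariski's main theorem, $\tilde{\sigma}$ being birational with connected fibres onto a normal surface), yields $\tilde{\sigma}_{\ast}\mathcal{O}_{\tilde{X}}(\sum n_k\tilde{F}_k)=\mathcal{O}_{X}$ whenever all $n_k\ge 0$. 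The displayed consequence for global sections is then immediate from the projection formula, since $\tilde{\sigma}_{\ast}(\tilde{\sigma}^{\ast}\mathcal{L}\otimes\mathcal{O}(\sum n_k\tilde{F}_k))\cong\mathcal{L}\otimes\tilde{\sigma}_{\ast}\mathcal{O}(\sum n_k\tilde{F}_k)\cong\mathcal{L}$, so both global-section spaces equal $H^0(X,\mathcal{L})$.
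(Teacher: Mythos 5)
Your proposal is correct and in substance matches the paper's proof: the only part the paper proves explicitly is the second formula of (3), via $\hat{\sigma}^*\left(\tfrac{1}{2}(C+\sum F_i)\right)=\tfrac{1}{2}\left(\hat{C}+\sum \hat{F}_i+4\sum(\hat{P}_i^+ +\hat{P}_i^-)\right)$ with multiplicity $4=3+1$ at each $p_i^{\pm}$, which is exactly your strict-transform computation (coefficient $1-\tfrac{3}{2}-\tfrac{1}{2}=-1$) rearranged, while (1), (2), the first formula of (3) and (4) are dispatched by citing \cite[Sect.\ V.22]{BPV}. Your \'etale-eigensheaf argument for (1), the double-cover/Hurwitz facts for (2)--(3), and the blow-down induction with $\mathcal{O}(n\tilde{F})|_{\tilde{F}}\simeq\mathcal{O}_{\IP^1}(-n)$ for (4) are precisely the standard proofs behind those citations, correctly filled in.
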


\begin{proof}
The second formula in (3) follows because for the blow-up we have
\[
\hat{\sigma }^* (K_S) = K_{\hat{S}} - \sum (\hat{P}_i^+ +\hat{P}_i^-)
\]
and, moreover,
\[
\hat{\sigma }^* \left(  \frac{1}{2} (C + \sum F_i)  \right) = \frac{1}{2} \left( \sum \hat{F}_i  + \hat{C} + 4\sum (\hat{P}^+_i+\hat{P}^-_i) \right)  
\]
because the multiplicity of $C + \sum F_i$ is $4$ in each of the points $p_i^+$, $p_i^-$. The remaining formulas are standard and can be found in \cite{BPV}. For instance, (2) and the first formula in (3) are in \cite[Sect.\ V.22]{BPV}. 
\end{proof}

We establish some vanishing results used later on.

\begin{prop}\xlabel{pGeneral}
Put
\[
D = k K_X + \sum_{i} e_i^+ E_i^+ +\sum_i e_i^- E_i^-
\]
with $k, e_i^+, e_i^-\in\mathbb{Z}$ and 
\[
2\ge k\ge 0, \quad e_i^++e_i^- - k\le 0, \quad e_i^+-k\le 0, \quad e_i^- -k\le 0 \;\forall i\, .
\]
Assume that
\[
H^0 \left( \mathbb{P}^1\times\mathbb{P}^1, \mathcal{O}_{\mathbb{P}^1\times\mathbb{P}^1}(k,4k)  \otimes \bigoplus_i \left( \mathcal{I}_{p_i^+}^{k-e_i^+} \oplus \mathcal{I}_{p_i^+}^{k-e_i^-} \right)                 \right) =0
\]
or, in words, that there are no nonzero effective divisors of bidegree $(k,4k)$ on $\mathbb{P}^1\times\mathbb{P}^1$ passing with multiplicity $k-e_i^+$ through $p_i^+$ and with multiplicity $k-e_i^-$ through $p_i^-$. Then
\[
H^0 (X,\mathcal{O}_X (D))= 0\, .
\] 
\end{prop}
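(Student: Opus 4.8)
The strategy is to reduce the vanishing of $H^0(X, \mathcal{O}_X(D))$ to the stated vanishing of sections on $\mathbb{P}^1 \times \mathbb{P}^1$ by climbing back up the tower of maps in the Campedelli model. The key point is that global sections are insensitive to the various blow-ups and contractions as long as one keeps careful track of how the divisor $D$ pulls back, and that the double cover $\eta$ splits $H^0$ into an invariant and an anti-invariant part governed by the formula for $\eta_\ast \mathcal{O}_{\tilde{X}}$ in part (2) of the preceding Lemma.

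\textbf{Key steps.}
First I would pull $D$ back along the contraction $\tilde{\sigma}\colon \tilde{X} \to X$. Using part (4) of the Lemma, $H^0(X, \mathcal{O}_X(D)) = H^0(\tilde{X}, \tilde{\sigma}^\ast \mathcal{O}_X(D))$, and one can even absorb nonnegative multiples of the $\tilde{F}_k$ for free. The relations $\tilde{\sigma}^\ast E_i^\pm = \tilde{E}_i^\pm + \tilde{F}_i$ and $\tilde{\sigma}^\ast K_X = K_{\tilde{X}}$ let me rewrite $\tilde{\sigma}^\ast D$ in terms of $K_{\tilde{X}}$, the $\tilde{E}_i^\pm$, and the $\tilde{F}_i$. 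Next I would push down along the double cover $\eta\colon \tilde{X} \to \hat{S}$ via the projection formula: $H^0(\tilde{X}, \eta^\ast \mathcal{M}) = H^0(\hat{S}, \mathcal{M} \otimes \eta_\ast \mathcal{O}_{\tilde{X}})$, which by part (2) of the Lemma breaks into two summands, one twisted by $\mathcal{O}_{\hat{S}}(-\tfrac{1}{2}(\hat{C} + \sum \hat{F}_i))$. Here the canonical bundle formula (part (3)) is essential: $K_{\tilde{X}} = \eta^\ast(K_{\hat{S}} + \tfrac{1}{2}(\hat{C} + \sum \hat{F}_i))$, so the two halves of $\eta_\ast \mathcal{O}_{\tilde{X}}$ correspond precisely to shifting $K_{\hat{S}}$ by $0$ or by $\tfrac{1}{2}(\hat{C} + \sum \hat{F}_i)$. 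The hypotheses $e_i^+ + e_i^- - k \le 0$, $e_i^\pm - k \le 0$, and $0 \le k \le 2$ are exactly what is needed to ensure that the anti-invariant summand contributes nothing (its degree on each ruling is too negative) while the invariant summand produces a line bundle of bidegree $(k, 4k)$. Finally I would descend along the blow-up $\hat{\sigma}\colon \hat{S} \to S$, where the multiplicities $k - e_i^\pm$ at the points $p_i^\pm$ appear as the orders of vanishing imposed by the exceptional divisors $\hat{P}_i^\pm$; the projection formula for $\hat{\sigma}_\ast$ then identifies the remaining space of invariant sections with $H^0(\mathbb{P}^1 \times \mathbb{P}^1, \mathcal{O}(k,4k) \otimes \bigotimes_i \mathcal{I}_{p_i^+}^{k-e_i^+} \mathcal{I}_{p_i^-}^{k-e_i^-})$, which vanishes by assumption.

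\textbf{The main obstacle.}
The delicate part is the bookkeeping with the half-integral $\mathbb{Q}$-divisors and the careful verification that the anti-invariant summand really does have no sections under the stated inequalities; one must track the multiplicities $k - e_i^\pm$ coming from the $\hat{P}_i^\pm$ against the $\tfrac{1}{2}(\hat{C} + \sum \hat{F}_i)$ twist and confirm that the numerical hypotheses force negativity on each fiber of $\pi_2$. A secondary subtlety is making sure that the reduction along $\tilde{\sigma}$ does not lose or gain sections: this is exactly guaranteed by part (4) of the Lemma, which is why the hypothesis only constrains the combination of $E_i^\pm$ and $K_X$ and places no condition on the $\tilde{F}_i$. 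Once the degrees on each ruling are pinned down, the remaining computation is a routine identification of invariant sections with sections on $\mathbb{P}^1 \times \mathbb{P}^1$ vanishing to prescribed order at the ten points.
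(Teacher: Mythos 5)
Your outline follows exactly the same route as the paper's proof: pull back along $\tilde{\sigma}$, absorb the curves $\tilde{F}_i$ via part (4), recognize the result as an $\eta$-pullback, split $H^0$ using $\eta_\ast\mathcal{O}_{\tilde{X}}$ into an invariant and an anti-invariant summand, descend the invariant one along $\hat{\sigma}$ to sections of $\mathcal{O}(k,4k)$ with the multiplicity conditions, and kill the anti-invariant one by a degree count. However, there is a concrete error at the crucial step: the relation $\tilde{\sigma}^\ast K_X = K_{\tilde{X}}$ is false. Since $\tilde{\sigma}$ contracts the five $(-1)$-curves $\tilde{F}_i$, the correct relation is $K_{\tilde{X}} = \tilde{\sigma}^\ast K_X + \sum_i \tilde{F}_i$. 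This is not a harmless slip, because it is exactly this correction term that makes the hypotheses fit the argument: with the correct formula one gets
\[
\tilde{\sigma}^\ast D = kK_{\tilde{X}} + \sum_i \left( e_i^+ + e_i^- - k\right)\tilde{F}_i + \sum_i e_i^+ \tilde{E}_i^+ + \sum_i e_i^- \tilde{E}_i^-\, ,
\]
so the hypothesis $e_i^+ + e_i^- - k \le 0$ is precisely what allows you to add the nonnegative combination $\sum_i (k-e_i^+-e_i^-)\tilde{F}_i$ and invoke part (4) to replace $\tilde{\sigma}^\ast D$ by $kK_{\tilde{X}} + \sum_i e_i^\pm \tilde{E}_i^\pm$, which is an honest $\eta$-pullback (note that $\tilde{F}_i$ itself is not one: $\eta^\ast \hat{F}_i = 2\tilde{F}_i$). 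With your formula the coefficient of $\tilde{F}_i$ would instead be $e_i^+ + e_i^-$, and discarding these terms would require $e_i^+ + e_i^- \le 0$, which the hypotheses do not give (for instance $k=2$, $e_i^+ = e_i^- = 1$ is allowed); nor does the projection formula of part (2) apply directly to a divisor containing odd multiples of $\tilde{F}_i$, since such a divisor is not pulled back from $\hat{S}$.

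A related inaccuracy is your attribution of the hypotheses. The condition $e_i^+ + e_i^- - k \le 0$ is used only in the absorption step on $\tilde{X}$ just described, not in controlling the two summands downstairs. The vanishing of the anti-invariant summand uses only $k \le 2$: its class descends to bidegree $(k-3,\,4k-6)$ on $\mathbb{P}^1\times\mathbb{P}^1$, whose first entry is negative, independently of any point conditions. The invariant summand has bidegree $(k,4k)$ automatically, and $e_i^\pm - k \le 0$ merely ensures that the coefficients $k-e_i^\pm$ of the $\hat{P}_i^\pm$ are genuine vanishing conditions at $p_i^\pm$. Once you replace the canonical bundle relation by the correct one, your outline becomes the paper's proof essentially verbatim.
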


\begin{proof}
\begin{align*}
&H^0 (X,k K_X + \sum_{i} e_i^+ E_i^+ +\sum_i e_i^- E_i^-) \\
 &= H^0 (\tilde{X}, \tilde{\sigma }^*( k K_X + \sum_{i} e_i^+ E_i^+ +\sum_i e_i^- E_i^-))\\
 &= H^0 (\tilde{X}, \mathcal{O}_{\tilde{X}} (kK_{\tilde{X}} +(e_i^++e_i^- - k)(\sum \tilde{F}_i )+\sum_{i} e_i^+ \tilde{E}_i^+ +\sum_i e_i^- \tilde{E}_i^- ))\\
 &=  H^0 (\tilde{X}, \mathcal{O}_{\tilde{X}} (kK_{\tilde{X}}  +  \sum_{i} e_i^+ \tilde{E}_i^+ +\sum_i e_i^- \tilde{E}_i^- ))\\
 &= H^0 (\tilde{X}, \mathcal{O}_{\tilde {X}} (\eta^{\ast} (k K_{\hat{S}} +\frac{k}{2} ( \hat{C} + \sum \hat{F}_i)) +  \sum_{i} e_i^+ \tilde{E}_i^+ +\sum_i e_i^- \tilde{E}_i^- )) \\
 &= H^0 \left( \hat{S}, \mathcal{O}_{\hat{S}}\left(  k K_{\hat{S}} +\frac{k}{2} ( \hat{C} + \sum \hat{F}_i)  +   \sum_{i} e_i^+ \hat{P}_i^+ +\sum_i e_i^- \hat{P}_i^-    \right)  \right) \\
&\oplus H^0 \left( \hat{S}, \mathcal{O}_{\hat{S}}\left(  k K_{\hat{S}} +\frac{k-1}{2} ( \hat{C} + \sum \hat{F}_i)  +   \sum_{i} e_i^+ \hat{P}_i^+ +\sum_i e_i^- \hat{P}_i^-    \right)  \right) .
\end{align*}
We investigate the first term of the previous direct sum first:
\begin{align*}
& H^0 \left( \hat{S}, \mathcal{O}_{\hat{S}}\left(  k K_{\hat{S}} +\frac{k}{2} ( \hat{C} + \sum \hat{F}_i)  +   \sum_{i} e_i^+ \hat{P}_i^+ +\sum_i e_i^- \hat{P}_i^-    \right)  \right)\\
&=H^0 \left( \hat{S}, \mathcal{O}_{\hat{S}}\left(  \hat{\sigma }^* \left( kK_{S} + \frac{k}{2}(C + \sum F_i )  \right) - \sum k(\hat{P}^+_i + \hat{P}^-_i)+   \sum_{i} e_i^+ \hat{P}_i^+ +\sum_i e_i^- \hat{P}_i^-    \right)  \right)\\
 &=H^0 \left( \mathbb{P}^1\times\mathbb{P}^1, \mathcal{O}_{\mathbb{P}^1\times\mathbb{P}^1}(k,4k)  \otimes \bigoplus_i \left( \mathcal{I}_{p_i^+}^{k-e_i^+} \oplus \mathcal{I}_{p_i^+}^{k-e_i^-} \right)                 \right) .
\end{align*}
Since 
\[
\hat{\sigma }^* (-\frac{1}{2} (C+ \sum F_i) ) = -\frac{1}{2} (\hat{C} + \sum \hat{F}_i ) - 2 \sum (\hat{P}_i^+ + \hat{P}_i^-)
\]
and $\mathcal{O} (-\frac{1}{2} (C+ \sum F_i)) = \mathcal{O}_{\PP^1\times \PP^1} (-3,-6)$, an analogous computation shows that 
\[
H^0 \left( \hat{S}, \mathcal{O}_{\hat{S}}\left(  k K_{\hat{S}} +\frac{k-1}{2} ( \hat{C} + \sum \hat{F}_i)  +   \sum_{i} e_i^+ \hat{P}_i^+ +\sum_i e_i^- \hat{P}_i^-    \right)  \right)
\]
has no sections because $k\le 2$. 
The claim follows. 
\end{proof}

\begin{cor}\xlabel{cGeneral}
The following hold:
\begin{itemize}
\item[(1)]
$H^0 (X, K_X)=0$.
\item[(2)]
$H^0 (X, K_X + E^+_{i_0} - E^+_{j_0})=0$ for $i_0 \neq j_0$.
\item[(3)]
Let $I, J, K, L$ be four pairwise different indices out of $\{ 1, \dots ,5\}$. Then we have
\[
H^0 (X, 2K_X +E^{\pm}_I +E^{\pm}_J -E_K^{\pm} -E^{\pm}_L) = 0 \, .
\]
(Here arbitrary combinations of $+$ and $-$ are allowed, so the only requirement is that the elliptic curves are from distinct fibres).
\end{itemize}
\end{cor}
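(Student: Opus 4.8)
The plan is to derive all three vanishings from Proposition~\ref{pGeneral}, by choosing the integers $k$ and $e_i^{\pm}$ appropriately and then checking the resulting vanishing of global sections on $\IP^1\times\IP^1$ by restricting to the five fibers $F_1,\dots,F_5$ of $\pi_2$. Recall from the ideals $I_a^{\pm}$ that $p_a^{+}$ and $p_a^{-}$ both have $\phi$-coordinate $-\xi^{3a}$, so both lie on $F_a$, while $p_a^{+}$ has $\psi$-coordinate $\golden\,\xi^{a}$ and $p_a^{-}$ has $\psi$-coordinate $-\golden^{-1}\xi^{a}$; as $a$ runs over $\ZZ/5\ZZ$ the five values $-\xi^{3a}$ are distinct. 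The one arithmetic input I will use is that $\golden=-\xi^3-\xi^2$ is a real number greater than $1$, so $\golden$ and $\golden^{-1}$ have different absolute values. The basic mechanism is that $\mathcal{O}(k,4k)$ restricts on each fiber $F_a\cong\IP^1$ (with coordinate $\psi$) to $\mathcal{O}(k)$; hence a section required to vanish along $F_a$ to total order $>k$ must vanish identically on $F_a$, i.e.\ $F_a$ divides it, and peeling off such a fiber drops the second bidegree by one.

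For (1) take $k=1$ and all $e_i^{\pm}=0$; the numerical hypotheses of Proposition~\ref{pGeneral} are immediate, and the claim reduces to showing that no nonzero section of $\mathcal{O}(1,4)$ vanishes to order one at all ten points. On each fiber $F_a$ the restriction is a linear form in $\psi$ forced to vanish at the two distinct points $p_a^{\pm}$, hence vanishing identically; so all five fibers divide the section, and their product of bidegree $(0,5)$ cannot divide a section of bidegree $(1,4)$. For (2) take $k=1$, $e_{i_0}^{+}=1$, $e_{j_0}^{+}=-1$: the required multiplicities are $2$ at $p_{j_0}^{+}$, $0$ at $p_{i_0}^{+}$, and $1$ elsewhere. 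A linear restriction cannot vanish to order $2$, so $F_{j_0}$ divides the section, and the three fibers $F_i$ with $i\neq i_0,j_0$ each carry two points, hence divide it as well. This leaves a section of $\mathcal{O}(1,0)$, i.e.\ a linear form $t$ in $\psi$, which must still vanish at $p_{j_0}^{+}$ and at $p_{i_0}^{-}$; but these have $\psi$-coordinates $\golden\,\xi^{j_0}$ and $-\golden^{-1}\xi^{i_0}$ of different absolute values, so no single $t$ vanishes at both and the section is zero.

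For (3) take $k=2$, set $e_I^{s_I}=e_J^{s_J}=1$ and $e_K^{s_K}=e_L^{s_L}=-1$ for the chosen signs, and all other $e_i^{\pm}=0$; the numerical hypotheses hold since $k=2$ and no sum $e_i^{+}+e_i^{-}$ exceeds $1$. Computing the multiplicities $k-e$, every fiber carries total required vanishing exceeding $k=2$ (it is $1+2$ on $F_I,F_J$, it is $3+2$ on $F_K,F_L$, and $2+2$ on the fifth fiber $F_M$), so all five fibers divide the section and we are reduced to a section $t$ of $\mathcal{O}(2,3)$. Repeating the argument, $F_K$ and $F_L$ still carry total vanishing $2+1>2$ and so divide $t$, leaving a section $u$ of $\mathcal{O}(2,1)$ that must pass through the six residual points $p_K^{s_K},p_L^{s_L},p_I^{-s_I},p_J^{-s_J},p_M^{+},p_M^{-}$, each with multiplicity one. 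Here the fiber peeling stalls: these six points have pairwise distinct $\psi$-coordinates and are spread over five distinct fibers (only $F_M$ carries two of them), so neither the $\psi$-restrictions nor the $\phi$-restrictions are forced to vanish.

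The main obstacle is therefore this final step: one must show that the six residual points impose independent conditions on the six-dimensional space $H^0(\mathcal{O}(2,1))$, equivalently that they do not all lie on a single curve of bidegree $(2,1)$. I would settle it using the explicit coordinates: writing $u=\phi_1q_1(\psi)+\phi_2q_2(\psi)$ with $q_1,q_2$ binary quadratics, each point imposes one linear condition on the six coefficients, and it suffices that the resulting $6\times 6$ determinant be nonzero. Because the relevant $\psi$-coordinates mix $\golden,\golden^{-1}$ with distinct powers of $\xi$, no accidental proportionality among the conditions can occur, and the non-vanishing is cleanest to confirm through the computer-algebra verification of \cite{BBS} for each of the finitely many sign patterns (reduced further by the $\autGodeaux$- and Galois-symmetry already exploited in Section~\ref{sDegree1}).
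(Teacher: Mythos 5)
Your proposal is correct and follows essentially the same route as the paper's proof: reduction via Proposition \ref{pGeneral}, forced containment of the fibers $F_i$ (the paper's ``$D$ has to contain $F_K$ and $F_L$ doubly and the other fibers simply'', which your restriction-degree peeling makes explicit), and a final reduction to six simple points imposing independent conditions on bidegree-$(2,1)$ divisors, verified --- exactly as in the paper --- by the computer check of \cite{BBS}. The only cosmetic difference is that in (2) you justify the distinctness of the residual $\psi$-coordinates via $\abs{\golden}\neq\abs{\golden^{-1}}$, where the paper simply remarks that all ten points $p_i^{\pm}$ project to distinct points under $\pi_1$.
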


\begin{proof}
We apply Proposition \ref{pGeneral}.

(1): Sections in $H^0(X, K_X)$ correspond to effective divisors of bidegree $(1,4)$ through the points $p_i^+$, $p_i^-$ (it follows that there are no such nontrivial divisors).

(2): We have to consider divisors $D$ of bidegree $(1,4)$ on $S$ passing through all the points $p_i^+$, $p_i^-$ with the possible exception of $p_{i_0}^+$, and passing doubly through the point $p_{j_0}^+$. But a nontrivial such $D$ would have to contain as components all the four lines which $p_{i_0}^+$ does not lie on. Hence $D$ minus these four lines would be a divisor of bidegree $(1,0)$ which has to pass simply through $p_{j_0}^+$ and through $p_{i_0}^-$. This is impossible because the points $p_i^+$, $p_i^-$ all project to different points in $\mathbb{P}^1$ under $\pi_1$. 

(3): We have to consider effective divisors $D$ of bidegree $(2,8)$ on $\mathbb{P}^1\times \mathbb{P}^1$ which pass simply through $p_I^{\pm}$, $p_J^{\pm}$, triply through $p_K^{\pm}$, $p_L^{\pm}$, and doubly through all the remaining points. If $D$ is a nontrivial such divisor, it has to contain $F_K$ and $F_L$ doubly and the other fibers simply. Hence $D-2 (F_K+F_L)-$(remaining fibers) is a divisor of bidegree $(2,1)$ through six points as illustrated in the right hand picture below. 
\smallskip

\begin{center}
\setlength{\unitlength}{1cm}
\begin{picture}(10, 2)

\put(-1, 2){\line(0,-1){2}}
\put(-0.5, 2){\line(0,-1){2}}
\put(0, 2){\line(0,-1){2}}
\put(0.5,2){\line(0,-1){2}}
\put(1,2){\line(0,-1){2}}

\put(1.5,1){(2,8)-divisors}
\put(1.5,0.5){ through this}

\put(4,1){\vector(1,0){1}}

\put(6, 2){\line(0,-1){2}}
\put(6.5, 2){\line(0,-1){2}}
\put(7, 2){\line(0,-1){2}}
\put(7.5,2){\line(0,-1){2}}
\put(8,2){\line(0,-1){2}}

\put(8.5,1){(2,1)-divisors}
\put(8.5,0.5){ through this}

\put(-1.2,1.5){$\times$}
\put(-1.2, 0.5){$\times$}
\put(-1.05,0.5){$\times$}

\put(-0.7,1.5){$\times$}
\put(-0.7, 0.5){$\times$}
\put(-0.55, 0.5){$\times$}

\put(-0.2,1.5){$\times$}
\put(-0.05, 1.5){$\times$}
\put(-0.12, 1.65){$\times$}
\put(-0.2, 0.5){$\times$}
\put(-0.05, 0.5){$\times$}

\put(0.3,1.5){$\times$}
\put(0.45, 1.5){$\times$}
\put(0.38, 1.65){$\times$}
\put(0.3, 0.5){$\times$}
\put(0.45, 0.5){$\times$}

\put(0.8,1.5){$\times$}
\put(0.95, 1.5){$\times$}
\put(0.8, 0.5){$\times$}
\put(0.95, 0.5){$\times$}

\put(5.8,1.5){$\circ$}
\put(5.8, 0.5){$\times$}

\put(6.3,1.5){$\circ$}
\put(6.3, 0.5){$\times$}

\put(6.8,1.5){$\times$}
\put(6.8, 0.5){$\circ$}

\put(7.3,1.5){$\times$}
\put(7.3, 0.5){$\circ$}

\put(7.8,1.5){$\times$}
\put(7.8, 0.5){$\times$}

\end{picture}
\end{center}

It can be checked by computer \cite{BBS} that all of these point configurations here impose independent conditions on divisors of bidegree $(2,1)$ and hence there are no such nontrivial divisors. 
\end{proof}

\section{The magic of the torsion}\xlabel{sTorsion}

It will turn out that the torsion in the Picard group of the Godeaux surface can perform some magic for us which will be indispensable for the construction of the length $11$ exceptional sequence, namely, it can charm away unwanted sections. By this we mean that although $H^0 (X,\mathcal{L}) \neq 0$ for some line bundle $\mathcal{L}$ on $X$, we will have $H^0 (X,\mathcal{L}\otimes \mathcal{O}_{\tau}) = 0$ for $\tau \in \mathrm{Pic}(X)_\mathrm{tors}$ nontrivial. We now briefly discuss these phenomena.

\begin{lem}\xlabel{lTorsion}\xlabel{lTorsionElliptic}
Let $\mathcal{L}= \mathcal{O}(D)$ be an effective line bundle on $X$ with $\mathcal{L}^2 < 0$ and $D$, $p^* (D)$ irreducible divisors. Then for $\tau \in \mathrm{Pic}(X)_\mathrm{tors}\simeq \Group$ nontrivial, one has
\[
H^0 (X,\mathcal{L} \otimes \mathcal{O}_{\tau}) = 0 \, .
\]
\end{lem}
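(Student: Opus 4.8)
The plan is to pull everything back along the quotient map $p\colon Y\to X$ and then exploit the rigidity of irreducible curves of negative self-intersection. Since the $\Group$-action on the Fermat surface $Y$ is free, $p$ is an \'etale Galois cover of degree $5$, and the decomposition of $p_\ast\mathcal{O}_Y$ recorded earlier reads
\[
p_\ast\mathcal{O}_Y=\bigoplus_{\tau\in\mathrm{Pic}(X)_{\mathrm{tors}}}\mathcal{O}_\tau,
\]
where the summand for $\tau=0$ is $\mathcal{O}_X$. Tensoring with $\mathcal{L}$, applying the projection formula, and taking global sections yields
\[
H^0(Y,p^\ast\mathcal{L})=\bigoplus_{\tau\in\Group}H^0(X,\mathcal{L}\otimes\mathcal{O}_\tau).
\]
So it suffices to show that the left-hand side is one-dimensional: then the summand $H^0(X,\mathcal{L})$, which is nonzero because $\mathcal{L}=\mathcal{O}(D)$ is effective, must account for all of it, forcing $H^0(X,\mathcal{L}\otimes\mathcal{O}_\tau)=0$ for every nontrivial $\tau$.

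The next step is to establish $h^0(Y,p^\ast\mathcal{L})=1$. Write $C:=p^\ast D$, which is effective and, by hypothesis, irreducible. Because $p$ is \'etale of degree $5$ we have $C^2=5\,D^2=5\,\mathcal{L}^2<0$. I then invoke the standard rigidity argument: any effective divisor $C'$ linearly equivalent to $C$ must equal $C$. Indeed, if $C'$ does not contain $C$ as a component then $C\cdot C'\ge 0$, contradicting $C\cdot C'=C^2<0$; and if $C'=C+C''$ with $C''$ effective then $C''\sim 0$, whence $C''=0$ since $Y$ is a connected projective surface. Thus $|C|=\{C\}$, so $h^0(Y,p^\ast\mathcal{L})=1$, which completes the argument.

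The proof is short and I do not anticipate a genuine obstacle; the only place requiring care is the input $C^2<0$ together with the irreducibility of $C=p^\ast D$, and this is precisely where both hypotheses of the lemma enter (freeness of the action turns $D^2<0$ into $C^2<0$, while irreducibility of $p^\ast D$ makes the rigidity argument applicable). The main point is the bookkeeping of the character decomposition, which ensures that effectivity of $D$ pins down the trivial-character summand as the unique contributor to the one-dimensional space $H^0(Y,p^\ast\mathcal{L})$.
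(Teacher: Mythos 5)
Your proposal is correct and follows essentially the same route as the paper: pull back along the \'etale cover $p$, use the projection formula with the decomposition of $p_\ast\mathcal{O}_Y$ into torsion line bundles, and conclude from $h^0(Y,p^\ast\mathcal{L})=1$, which both arguments derive from the negativity of $(p^\ast\mathcal{L})^2$ and irreducibility of $p^\ast D$. The only difference is cosmetic: you spell out the rigidity argument for irreducible curves of negative self-intersection, which the paper leaves implicit.
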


\begin{proof}
It suffices to note that for the cover $p\colon Y \to X$ (where $Y$ is the Fermat) we have $(p^* \mathcal{L})^2 < 0$, hence $H^0 (Y,p^*(\mathcal{L}))=1$ and $H^0 (X,\mathcal{L})=1$. Since by the projection formula
\[
H^0 (Y, p^*(\mathcal{L})) = H^0 (X,\mathcal{L}) \oplus \bigoplus_{\tau \in \Group-\{ 0 \}} H^0 (X,\mathcal{L} \otimes \mathcal{O}_{\tau }),
\]
the claim follows.
\end{proof}

The previous lemma is basically all we will use. We remark however that it may also very well happen that $H^0 (X, \mathcal{L}) = 0$ for a line bundle $\mathcal{L}$ on $X$, whereas $H^0 (X, \mathcal{L}\otimes \mathcal{O}_{\tau }) \neq 0$: this happens for example for $\mathcal{L} = K_X$ where $H^0 (X, \mathcal{L}\otimes \mathcal{O}_{\tau })= 1$ (see \cite{Reid}). We will use the following obvious result to ensure later that we do not create unwanted sections in $\mathrm{Hom}$-spaces of our line bundle sequence. 

\begin{lem}\xlabel{lObvious}
If $\mathcal{L}$ is not numerically equivalent to any effective divisor, then always
\[
H^0 (X,\mathcal{L} \otimes \mathcal{O}_{\tau }) = 0 . 
\]
\qqed
\end{lem}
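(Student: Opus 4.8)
The plan is to argue by contraposition, unwinding the hypothesis: a nonzero section of $\mathcal{L}\otimes\mathcal{O}_{\tau}$ would produce an effective divisor numerically equivalent to $\mathcal{L}$, which is exactly what the hypothesis forbids. The only structural input needed is that $\mathcal{O}_{\tau}$ is a \emph{torsion} line bundle.

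First I would record that tensoring by $\mathcal{O}_{\tau}$ leaves the numerical class unchanged. Indeed, since $N(X)\simeq \Pic(X)/(\mathrm{tors})$, the class of the torsion bundle $\mathcal{O}_{\tau}$ in $N(X)$ is zero, so $\mathcal{O}_{\tau}$ is numerically trivial. Consequently $\mathcal{L}\otimes\mathcal{O}_{\tau}$ and $\mathcal{L}$ represent the same class in $N(X)$. Note this covers the trivial $\tau$ as well, so no separate case is required.

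Next I would argue by contradiction. Suppose $H^0(X,\mathcal{L}\otimes\mathcal{O}_{\tau})\neq 0$ and choose a nonzero global section $s$. Its divisor of zeros $D$ is an effective divisor with $\mathcal{O}_X(D)\cong \mathcal{L}\otimes\mathcal{O}_{\tau}$, hence $D$ is numerically equivalent to $\mathcal{L}\otimes\mathcal{O}_{\tau}$, and therefore to $\mathcal{L}$ by the previous step. Thus $\mathcal{L}$ would be numerically equivalent to the effective divisor $D$, contradicting the assumption. This forces $H^0(X,\mathcal{L}\otimes\mathcal{O}_{\tau})=0$.

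There is no genuine obstacle here, the statement being essentially a tautology once one observes that a torsion twist does not alter the numerical class. The only point requiring a moment's care is the direction of the implication: the hypothesis constrains $\mathcal{L}$ itself, so one must transport the numerical equivalence back from $\mathcal{L}\otimes\mathcal{O}_{\tau}$ to $\mathcal{L}$, which is legitimate precisely because $\tau$ is torsion. In applications this lemma is the convenient counterpart to Lemma \ref{lTorsion}: whereas the latter removes sections from bundles that \emph{are} effective, this one handles the easier case in which no numerically effective representative exists at all, guaranteeing the vanishing simultaneously for every torsion twist.
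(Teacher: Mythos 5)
Your proof is correct and coincides with the argument the paper intends: the paper states this lemma without proof (marking it as obvious), and your reasoning --- that $\mathcal{O}_{\tau}$ is numerically trivial, so a nonzero section of $\mathcal{L}\otimes\mathcal{O}_{\tau}$ would give an effective divisor numerically equivalent to $\mathcal{L}$, contradicting the hypothesis --- is precisely the omitted verification. Nothing is missing; your remark that the equivalence transports back to $\mathcal{L}$ because the twist is torsion is the one point worth making explicit, and you made it.
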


\begin{lem} \xlabel{l:TorsionEL}
Let $I, J, K, L$ be four pairwise different indices out of $\{ 0, \dots ,4\}$ and consider
\[
	D := 2K_X +E^{\pm}_I +E^{\pm}_J -E_K^{\pm} -E^{\pm}_L
\]
(Here arbitrary combinations of $+$ and $-$ are allowed, so the only requirement is that the elliptic curves are from distinct fibres). Then we have
\[
	H^0(\mathcal{O}(D) \otimes \mathcal{O}_{\tau^i}) = 
	\left\{
	\begin{tabular}{cl}
	      1 & \text{if $D = K + L^\pm_k$ and $i\not=0$} \\
	      0 & \text{otherwise.}
	\end{tabular}
	\right.
\]
\end{lem}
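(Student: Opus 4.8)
The plan is to separate the admissible divisors $D = 2K_X + E^{\pm}_I + E^{\pm}_J - E^{\pm}_K - E^{\pm}_L$ into two groups according to whether or not $D$ is linearly equivalent to $K_X + L$ for one of the fifteen lines $L = L^{\pm}_k$ of Proposition \ref{p:intersectionLine}. First I would record the relevant numerics: every such $D$ satisfies $D \cdot K_X = 2$, while $D \sim K_X + L$ forces $D^2 = (K_X+L)^2 = 1 + 2 - 3 = 0$, since $L \cdot K_X = 1$ and $L^2 = -3$. Using the explicit ideals of Theorem \ref{t:ellipticReid} and Proposition \ref{p:coeffElliptic} together with the intersection tables of Propositions \ref{p:intersectionElliptic}, \ref{p:intersectionLine} and \ref{p:intersectionEllipticLine}, I would then decide, for each choice of signs and of the distinct indices $I,J,K,L$, exactly which $D$ coincide with such a $K_X + L^{\pm}_k$; the action of $\autGodeaux$ and of $\Gal(\IQ(\xi)\colon\IQ)$ reduces this to a short list, and the bookkeeping is carried out with \cite{BBS}.

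For the divisors with $D \sim K_X + L$, where $L \cong \IP^1$, I would use the structure sequence
\[
0 \to \mathcal{O}_X(K_X) \to \mathcal{O}_X(K_X + L) \to \mathcal{O}_L(K_X + L) \to 0 .
\]
Tensoring with $\mathcal{O}_{\tau^i}$ and restricting to $L$, the torsion bundle $\mathcal{O}_{\tau^i}|_L$ is trivial because $\Pic(\IP^1)$ is torsion free, so the quotient term becomes $\mathcal{O}_{\IP^1}\big((K_X+L)\cdot L\big) = \mathcal{O}_{\IP^1}(-2)$, which has no global sections. Hence the long exact sequence gives
\[
H^0(\mathcal{O}_X(D) \otimes \mathcal{O}_{\tau^i}) \cong H^0(\mathcal{O}_X(K_X) \otimes \mathcal{O}_{\tau^i}),
\]
and the right-hand side is $0$ for $i = 0$ (as $p_g = 0$) and $1$ for $i \neq 0$ by \cite{Reid}. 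This yields the value $1$ in the first branch; note that its compatibility with $H^0(\mathcal{O}_X(D)) = 0$ from Corollary \ref{cGeneral}(3) confirms that $D \sim K_X + L$ holds as an honest linear, not merely numerical, equivalence.

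In the complementary case I must prove $H^0(\mathcal{O}_X(D) \otimes \mathcal{O}_{\tau^i}) = 0$ for every $i$. Whenever $D$ is not even numerically equivalent to an effective divisor this is immediate from Lemma \ref{lObvious}. The substantive cases are those in which $D$ is numerically effective (numerically of the form $E^{\pm}_a + E^{\pm}_b$, say) but acquires no sections under any torsion twist; here no purely numerical obstruction is available, and I would instead pass to the \'etale cover $p\colon Y \to X$. By the decomposition from the proof of Lemma \ref{lTorsion},
\[
H^0\big(Y, p^*\mathcal{O}_X(D)\big) = \bigoplus_{i=0}^{4} H^0\big(X, \mathcal{O}_X(D)\otimes \mathcal{O}_{\tau^i}\big),
\]
so it suffices to show the left-hand side vanishes. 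Since $p^* K_X = K_Y = \mathcal{O}_Y(1)$ and each $p^* E^{\pm}$ is the reduced $\Group$-invariant quintic elliptic curve on the Fermat with ideal given by Theorem \ref{t:ellipticReid}, the class $p^*\mathcal{O}_X(D)$ is completely explicit, and I would certify $H^0(Y, p^*\mathcal{O}_X(D)) = 0$ by a Gr\"obner basis computation \cite{BBS}.

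The hardest part will be this final step: ensuring the case distinction is exhaustive, and in the numerically effective but non-special cases producing the vanishing on $Y$, where one genuinely has to work with the defining equations rather than with numerical invariants — this is exactly the Gr\"obner basis computation flagged in the introduction.
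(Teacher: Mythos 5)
Your proposal is correct, and while it shares the paper's overall skeleton in the vanishing cases (the eigenspace decomposition $H^0(Y,p^*\mathcal{L})=\bigoplus_i H^0(X,\mathcal{L}\otimes\mathcal{O}_{\tau^i})$ plus a Gr\"obner certificate on the Fermat), it treats the two nonvanishing cases by a genuinely different route. The paper proceeds uniformly: it substitutes $E_i^{\pm}\sim 3K_X-L^0_0-E_i^{\mp}$ to rewrite $D=8K_X-2L^0_0-(\text{effective divisor }D')$, normalizes the missing index to $0$ via $\delta$, and computes the ideal of $D'$ on $Y$ in all $16$ sign patterns; in the two special patterns ($+--+$ and $-++-$) the ideal contains a unique degree-$7$ form $F$, which is $\Group$-invariant, so the degree-$8$ part is spanned by $Fx_1,\dots,Fx_4$ carrying the nontrivial characters $1,2,3,4$ --- this yields $h^0(\mathcal{O}(D))=0$ and $h^0(\mathcal{O}(D)\otimes\mathcal{O}_{\tau^i})=1$ for $i\neq 0$ in one stroke, while in the other $14$ patterns the ideal starts in degree $9$, killing all five character summands simultaneously. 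You instead obtain the special cases conceptually from the restriction sequence along $L$: since $(K_X+L)\cdot L=-2$ and torsion restricts trivially to $L$, you get $h^0((K_X+L)\otimes\mathcal{O}_{\tau^m})=h^0(K_X\otimes\mathcal{O}_{\tau^m})$, which is $1$ for $m\neq 0$ (Reid's torsion canonical sections, quoted in Section \ref{sTorsion}) and $0$ for $m=0$; this identifies the value $1$ as ``Reid's section'' rather than re-deriving it, at the cost of importing Reid's $h^0(K_X\otimes\mathcal{O}_{\tau})=1$ as external input. Three small points to tighten: (i) state the torsion-pinning \emph{before} asserting $H^0(\mathcal{O}(D)\otimes\mathcal{O}_{\tau^i})\cong H^0(K_X\otimes\mathcal{O}_{\tau^i})$ --- numerically $D\equiv K_X+L$ only gives $D\sim K_X+L+\tau^j$, the sequence argument then shows exactly the twist by $\tau^{-j}$ has no sections, and Corollary \ref{cGeneral}(3) forces $j=0$; as written, your ``compatibility confirms'' phrasing is slightly circular though the intended argument is sound; (ii) $L\cong\IP^1$ deserves a word: the free $\Group$-action cannot stabilize a line (an order-$5$ automorphism of $\IP^1$ has fixed points), and $L^2=-3=l^2$ forces the five orbit lines to be pairwise disjoint, so $p|_l$ is an isomorphism onto $L$; (iii) in the cover computation, $p^*\mathcal{O}_X(D)$ has elliptic curves appearing with \emph{positive} sign, so before Gr\"obner bases can decide $h^0$ you must absorb them via the same substitution $E_i^{\pm}\sim 3K_X-L^0_0-E_i^{\mp}$ that the paper makes explicit, turning the question into ideal membership of degree-$8$ forms; your ``completely explicit'' elides exactly this step. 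Net comparison: your route explains the answer $1$ geometrically and shortens some vanishing cases via Lemma \ref{lObvious}, while the paper's single uniform computation needs no external input and exhibits the full character decomposition directly through the weights of $Fx_1,\dots,Fx_4$.
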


\begin{proof}
We use the linear equivalence
\[
	3K-L^0_0 = E_{i}^+ + E_{i}^-
\]
to write 
\[
	D = 8K_X - 2 L^0_0 - E^{\mp}_I - E^{\mp}_J -E_K^{\pm} -E^{\pm}_L.
\]
Since $I,J,K,L \in \{0,\dots,4\}$ are different there is exactly one missing index.
After repeated application of $\delta \in \autGodeaux$ we can assume that the missing index
is $0$. So we only need to consider
\[
	D = 8K_X - 2 L^0_0 - E^{\pm}_1 - E^{\pm}_2 -E_3^{\pm} -E^{\pm}_4.
\]
A comparison of numerical classes shows that of the $16$ sign combinations only
$+--+$ and $-++-$ are of the type $D = K + L^\pm_k$. We now compute the ideal of the
curve 
$$D' = 2 L^0_0  + E^{\pm}_1 +  E^{\pm}_2 + E_3^{\pm}  + E^{\pm}_4$$
on the Fermat surface in all $16$ cases using a Macaulay2 script \cite{BBS}. It turns out that in 
the two cases above there is (modulo the Fermat equation) a unique degree $7$ polynomial $F$ that
contains $D'$. One can further check that $F$ is $\Group$ invariant. It follows that
in degree $8$ we have the equations $Fx_1,\dots, Fx_4$ on which $\Group$ acts with weights $1,2,3,4$. In particular none of them ist $\Group$ invariant. This shows that
$\mathcal{O}(D) = \mathcal{O}(8K-D')$ has no section and $\mathcal{O}(D)\otimes \mathcal{O}_{\tau}^i$ has one section for each $i \not=0$.

In the other $14$ cases the ideals are generated (except for the Fermat equation) in degree $9$. Therefore 
\[
	H^0(\mathcal{O}(D)\otimes \mathcal{O}_{\tau^i}) = 0
\]
for all $i$ if $D \not= K + L^\pm_k$.
\end{proof}

\section{The exceptional sequence of length $11$ on $X$}\xlabel{sExceptional}

Let $\Db (X)$ be the bounded derived category of coherent sheaves on $X$. Recall the

\begin{definition}\xlabel{dExceptional}
An object $\mathcal{E}$ in $\Db (X)$ is called exceptional if $\mathrm{RHom}^{\bullet} (\mathcal{E}, \mathcal{E}) \simeq \mathbb{C}$. A sequence $(\mathcal{E}_1, \dots ,\mathcal{E}_n)$ of exceptional objects $\mathcal{E}_i$ is called an exceptional sequence if $\mathrm{RHom}^{\bullet} (\mathcal{E}_i, \mathcal{E}_j) =0$ whenever $i>j$. The sequence is called complete if the smallest full triangulated subcategory containing all the $\mathcal{E}_i$ is equivalent to $\Db (X)$. 
\end{definition}

It is clear that a variety with a complete exceptional sequence has a free Grothendieck group, and that the maximal length of an exceptional sequence is always bounded by the rank of that group. It follows that for our Godeaux surface $X$ this length is bounded by $11$. In fact, this is attained. For this we lift the sequence described in Proposition \ref{propNumerics} after tensoring it with $\mathcal{O}_X(E^+_{4,4} - E^-_{0,4})$:

\begin{thm}\xlabel{tMain}
Let $X$ be the classical Godeaux surface and $\tau\in\Group$ a nontrivial torsion class. Then there is an exceptional sequence of length $11$ in $\Db(X)$:
\[
(\mathcal{L}_1, \dots ,\mathcal{L}_{11})
\]
where all the $\mathcal{L}_i$ are line bundles (of course, the sequence is not complete). In the notation established in Section \ref{sDegree1}, these $\mathcal{L}_i$ are the following:
\begin{align*}
\mathcal{L}_1 &= \mathcal{O}_X(E^+_{4, 0}-E^+_{4,0}) =\mathcal{O}_X  \, , \\
\mathcal{L}_2 &=  \mathcal{O}_X(E^+_{4, 0}-E^+_{3,0})  \, , \\
\mathcal{L}_3 &=: \mathcal{M} = \mathcal{O}_X( K_X + E^+_{0,0} - E^+_{1,0} - E^+_{2,0} + E^+_{4,0} ) \, , \\
\mathcal{L}_4 &=  \mathcal{O}_X( E^+_{4,0} - E^+_{2,0} ) \, , \\
\mathcal{L}_5 &=  \mathcal{O}_X(E^+_{4,0} - E^+_{1,0}) \, , \\
\mathcal{L}_6 &= \mathcal{O}_X (E^+_{4,0} - E^-_{0,0})  \, , \\
\mathcal{L}_7 &=: \mathcal{N} =  \mathcal{O}_X(K_X + E^+_{0,0} - E^+_{1,0} - E^+_{2,0} + E^-_{3,0}) \, , \\
\mathcal{L}_8 &=\mathcal{N}\otimes \mathcal{O}(E_{0, 3}^+)^{-1}\otimes \mathcal{O}_{\tau } \simeq_{\mathrm{num}} \mathcal{O}_X(E^+_{4,1} - E^-_{0,1})  \, , \\
\mathcal{L}_9 &= \mathcal{O}_X(E^+_{4,2} - E^-_{0,2}) \, , \\
\mathcal{L}_{10} &= \mathcal{O}_X(E^+_{4,3} - E^-_{0,3})  \, , \\
\mathcal{L}_{11} &=\mathcal{N}\otimes \mathcal{O}(E_{0, 2}^+)^{-1}\otimes \mathcal{O}_{\tau } \simeq_{\mathrm{num}} \mathcal{O}_X(E^+_{4,4} - E^-_{0,4} ) \, .
\end{align*}

For $i \neq 3,7$ the line bundle $\mathcal{L}_i$ is of degree $0$, while $\mathcal{L}_3 = \mathcal{M}$ and $\mathcal{L}_7 = \mathcal{N}$ are of degree $1$. 
\end{thm}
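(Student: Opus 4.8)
The plan is to reduce the assertion that $(\mathcal{L}_1,\dots,\mathcal{L}_{11})$ is an exceptional sequence to a list of vanishings of global sections, and then to clear those vanishings with the machinery of Sections \ref{sTower} and \ref{sTorsion}.

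\emph{Reduction to vanishing of global sections.} Since $p_g=q=0$ we have $H^{\bullet}(X,\mathcal{O}_X)=\mathbb{C}$, so $\mathrm{RHom}^{\bullet}(\mathcal{L}_i,\mathcal{L}_i)=H^{\bullet}(X,\mathcal{O}_X)=\mathbb{C}$ for every line bundle; thus each $\mathcal{L}_i$ is automatically exceptional and the only content is the semi-orthogonality $\mathrm{RHom}^{\bullet}(\mathcal{L}_i,\mathcal{L}_j)=H^{\bullet}(X,\mathcal{L}_i^{-1}\otimes\mathcal{L}_j)=0$ for all $i>j$. The first step is to note that $\mathcal{L}_i\otimes\mathcal{O}_X(-E^+_{4,4}+E^-_{0,4})$ is numerically the $i$-th term of the sequence of Proposition \ref{propNumerics}; as this twist is common to all members it cancels in every difference $\mathcal{L}_i^{-1}\otimes\mathcal{L}_j$, so by Proposition \ref{propNumerics} and Riemann--Roch we get $\chi(X,\mathcal{L}_i^{-1}\otimes\mathcal{L}_j)=0$ for $i>j$. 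Consequently $h^1=h^0+h^2$, and Serre duality identifies $h^2(X,\mathcal{L}_i^{-1}\otimes\mathcal{L}_j)$ with $h^0(X,K_X\otimes\mathcal{L}_i\otimes\mathcal{L}_j^{-1})$. The entire theorem therefore reduces to the two families of vanishings
\[
H^0(X,\mathcal{L}_i^{-1}\otimes\mathcal{L}_j)=0 \quad\text{and}\quad H^0(X,K_X\otimes\mathcal{L}_i\otimes\mathcal{L}_j^{-1})=0, \qquad i>j.
\]

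\emph{Sorting the pairs by degree.} For each of the $\binom{11}{2}=55$ pairs the two bundles whose $H^0$ must vanish have degrees (with respect to $K_X$) adding up to $K_X^2=1$; since every $\mathcal{L}_i$ has degree $0$ except for $i\in\{3,7\}$, these two degrees form one of the pairs $(-1,2)$, $(0,1)$ or $(1,0)$. A class of negative degree is never numerically effective, because $X$ is minimal of general type and $K_X$ is nef, so its $H^0$ vanishes by Lemma \ref{lObvious}; the same applies to any degree-$0$ class of numerical shape $E^{\pm}_a-E^{\pm}_b$ which is not numerically effective. A degree-$0$ class that is effective has negative self-intersection and irreducible preimage on the Fermat, so its sections are charmed away by Lemma \ref{lTorsion}. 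The substantial cases are the degree-$1$ and degree-$2$ classes: using the numerical relations among the $E^{\pm}_{i}$ and $K_X$ (in particular $3K_X-L^0_0=E^+_i+E^-_i$) one brings them to the normal forms $K_X+E^+_{i_0}-E^+_{j_0}$, covered by Corollary \ref{cGeneral}(1)--(2), and $2K_X+E^{\pm}_I+E^{\pm}_J-E^{\pm}_K-E^{\pm}_L$, covered by Corollary \ref{cGeneral}(3) and Lemma \ref{l:TorsionEL}. In each such case the concrete input is Proposition \ref{pGeneral}, which translates the vanishing into the non-existence of a divisor of bidegree $(k,4k)$ on $\mathbb{P}^1\times\mathbb{P}^1$ with prescribed multiplicities at the ten Campedelli points.

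\emph{The main obstacle.} The delicate pairs are precisely those for which $\mathcal{L}_i^{-1}\otimes\mathcal{L}_j$ or its Serre dual is numerically of the critical form $K_X+L^{\pm}_k$: for such a class the \emph{untwisted} line bundle genuinely carries a section, so the naive sequence of Proposition \ref{propNumerics} fails to be exceptional on the nose and the offending section must be removed by a torsion twist. This is exactly why $\mathcal{L}_8$ and $\mathcal{L}_{11}$ are defined with the extra factor $\otimes\,\mathcal{O}_{\tau}$. The hard part is therefore the bookkeeping of the torsion class occurring in every difference involving $\mathcal{L}_8$ or $\mathcal{L}_{11}$: one must verify, via Lemma \ref{l:TorsionEL}, that the torsion is nontrivial exactly in the cases $D=K_X+L^{\pm}_k$ (so that $h^0$ drops from the forbidden value $1$ to $0$) and that the twist does not silently revive a section in any other $\mathrm{Hom}$-space. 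It is here that the genuine Gröbner-basis computation behind Lemma \ref{l:TorsionEL} is indispensable. Once this accounting is complete, all $55$ pairs vanish and $(\mathcal{L}_1,\dots,\mathcal{L}_{11})$ is an exceptional sequence.
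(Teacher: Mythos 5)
Your reduction (Riemann--Roch plus Serre duality, turning semi-orthogonality into the vanishing of $H^0(X,\mathcal{L}_i^{-1}\otimes\mathcal{L}_j)$ and $H^0(X,K_X\otimes\mathcal{L}_i\otimes\mathcal{L}_j^{-1})$ for $i>j$) and your degree bookkeeping agree with the paper's proof. But the case analysis goes wrong exactly where the content lies. For the degree-$1$ bundles you claim one ``brings them to the normal form $K_X+E^+_{i_0}-E^+_{j_0}$'' and then quotes Corollary \ref{cGeneral}(1)--(2). That cannot work: most of these classes are not numerically of that shape at all (there are mixed forms like $K_X+E^+_{3,0}-E^-_{0,0}$, and all differences involving $\mathcal{L}_8,\dots,\mathcal{L}_{11}$ live in the families $E^\pm_{\cdot,j}$ with $j\neq 0$, outside the Campedelli picture of Section \ref{sTower}); moreover numerical manipulation pins a line bundle down only up to torsion, so it cannot feed a vanishing statement such as Proposition \ref{pGeneral}, which concerns a specific linear equivalence class. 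What the paper actually does here is blunter and purely numerical: using the classification of effective degree-$1$ divisors from Section \ref{sDegree1}, it checks by computer that each of these difference classes is not numerically equivalent to \emph{any} effective divisor, so $H^0=0$ by Lemma \ref{lObvious} --- with exactly two exceptions.

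Those two exceptions are where you misidentify the mechanism. They are $\mathcal{L}_8^{\vee}\otimes\mathcal{L}_7=\mathcal{O}_X(E^+_{0,3})\otimes\mathcal{O}_{\tau}^{-1}$ and $\mathcal{L}_{11}^{\vee}\otimes\mathcal{L}_7=\mathcal{O}_X(E^+_{0,2})\otimes\mathcal{O}_{\tau}^{-1}$: numerically effective elliptic curve classes of degree $1$ and self-intersection $-1$ --- not degree-$0$ classes, and not classes of the form $K_X+L^{\pm}_k$. It is precisely to kill their sections that $\mathcal{L}_8$ and $\mathcal{L}_{11}$ carry the factor $\mathcal{O}_{\tau}$, and the relevant vanishing is Lemma \ref{lTorsion}. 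Lemma \ref{l:TorsionEL}, which you call indispensable, is not used in the proof of Theorem \ref{tMain} at all: the seven degree-$2$ cases involve only $\mathcal{L}_1,\dots,\mathcal{L}_7$, none of which is twisted by torsion, so after simplifying with the linear equivalence $E^+_i+E^-_i\sim 3K_X-L^0_0$ they are honest bundles of the form $\mathcal{O}_X(2K_X+E^{\pm}_I+E^{\pm}_J-E^{\pm}_K-E^{\pm}_L)$ and Corollary \ref{cGeneral}(3) covers them outright. You also state the content of Lemma \ref{l:TorsionEL} backwards: it says that a nontrivial torsion twist of such a degree-$2$ class \emph{acquires} a section when the class is numerically $K_X+L^{\pm}_k$ (and has none otherwise); the untwisted bundle never has one. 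That lemma is needed only in Section \ref{sOrthogonal} (Lemma \ref{lOther}), to verify that the modified sequence $(\mathcal{O}_{\tau},\mathcal{L}_2,\dots,\mathcal{L}_{11})$ is still exceptional, i.e.\ that the torsion twists appearing there do not silently create sections --- not in the proof of the theorem you were asked to prove.
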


\begin{proof} One checks (for example with Macaulay2 \cite{BBS}) that the classes of $\mathcal{L}_i$ tensored with $\mathcal{O}_X(E^+_{4,4} - E^-_{0,4})^*$ agree with those of Proposition \ref{propNumerics}.

Since $h^1 (X,\mathcal{O}_X ) = h^2 (X,\mathcal{O}_X ) = 0$, every line bundle on $X$ is exceptional. 
First of all one then has that $\chi (\mathcal{L}_i,\mathcal{L}_j) = 0$ for all $i > j$. This follows from Proposition \ref{propNumerics}. Thus it suffices to show that 
\[
h^0 (X,\mathcal{L}_i^{\vee} \otimes \mathcal{L}_j ) = 0 \;\;\; \mathrm{and}\;\;\; h^2 (X,\mathcal{L}_i^{\vee} \otimes \mathcal{L}_j)= h^0 (X,\mathcal{L}_j^{\vee} \otimes \mathcal{L}_i \otimes K_X) =0 \; \mathrm{for} \; \mathrm{all} \; i>j \, .
\]
Now the bundles $\mathcal{L}_i^{\vee} \otimes \mathcal{L}_j$ and $\mathcal{L}_j^{\vee} \otimes \mathcal{L}_i \otimes K_X$ are of degree $-1$, $0$ or $1$ in all but the following seven cases:
\[
\mathcal{L}_j^{\vee} \otimes \mathcal{L}_i \otimes K_X \; \mathrm{for} \; (i,j) = (3,2), \: (3,1), \: (7,6), \: (7,5), \: (7,4), \: (7,2), \: (7,1) \, .
\]
In these cases $\mathcal{L}_j^{\vee} \otimes \mathcal{L}_i \otimes K_X$ has degree $2$. Let us call these the exceptional cases and exclude them for the moment. In the non-exceptional cases one can check (by computer \cite{BBS}) that the respective line bundles are not even numerically equivalent to any effective divisor (here we use the classification of the effective degree $1$ divisors in Section \ref{sDegree1}) unless we look at $\mathcal{L}_{11}^{\vee} \otimes \mathcal{L}_7 = \mathcal{L}_{11}^{\vee}\otimes \mathcal{N}$ or $\mathcal{L}_{8}^{\vee} \otimes \mathcal{L}_7 = \mathcal{L}_{8}^{\vee}\otimes \mathcal{N}$. These are numerically equivalent to elliptic curves on $X$ of self-intersection $-1$, however, the way we chose $\mathcal{L}_8$ and $\mathcal{L}_{11}$ above shows that up to \emph{linear equivalence} the bundles $\mathcal{L}_{11}^{\vee}\otimes \mathcal{N}$ and $\mathcal{L}_{8}^{\vee}\otimes \mathcal{N}$ are really of the form $\mathcal{O}_X (F) \otimes \mathcal{O}_{\tau }$ where $F$ is such an elliptic curve and $\tau$ is a nontrivial torsion class in $\Group$.  Hence they do not have sections by Lemma \ref{lTorsionElliptic}. 
\smallskip

Let us now consider the remaining exceptional cases. It turns out that in all these cases $\mathcal{L}_j^{\vee} \otimes \mathcal{L}_i \otimes K_X$ is (up to \emph{linear equivalence} resp. isomorphism of line bundles) equal to
\[
	 2K_X +E^{\pm}_I +E^{\pm}_J -E_K^{\pm} -E^{\pm}_L
\]
where $I, J, K, L$ are four pairwise different indices out of $\{ 1, \dots , 5\}$. Hence we get the desired vanishing of global sections in these cases by Corollary \ref{cGeneral}. \end{proof}

\begin{remark}\xlabel{rNonvanishing}
This gives a counterexample to Kuznetsov's Nonvanishing Conjecture, see \cite[Conj.\ 9.1]{Kuz09}. In fact, we have a semiorthogonal decomposition in $\Db(X)$
\[
\Db(X) = \left\langle \mathcal{A},\mathcal{L}_1, \dots ,\mathcal{L}_{11},  \right\rangle
\]
where $\mathcal{A}$ is the admissible subcategory (for this notion and that of a semiorthogonal decomposition we refer to \cite[Sect.\ 2]{Kuz09}) which is right orthogonal to the exceptional sequence of line bundles (hence it must be written to the left of the sequence; go figure). The Hochschild homology  $\mathrm{HH}_{\bullet}(\mathcal{A})$ is zero: Hochschild homology is additive on semiorthogonal decompositions and by the Hochschild-Kostant-Rosenberg isomorphism 
\[
\mathrm{HH}_{i} (\mathrm{D}^b (X))\simeq \bigoplus_{q-p=i} H^p (X, \Omega^q_X)\, .
\]
Hence, since all rational cohomology classes on $X$ are algebraic, $\mathrm{HH}_{i} (\mathrm{D}^b (X))\simeq \mathbb{C}^{11}$. But 
\[
\mathrm{HH}_{\bullet} (\langle \mathcal{L}_1, \dots , \mathcal{L}_{11} \rangle ) \simeq \mathbb{C}^{11} ,
\]
thus $\mathrm{HH}_{\bullet } (\mathcal{A}) =0$. However, it is not true that $\mathcal{A} = 0$ as predicted by the conjecture: the exceptional sequence is not full because the Grothendieck group of $X$ is not free. 
\end{remark}

\section{Explicit objects in the orthogonal to the exceptional sequence}\xlabel{sOrthogonal}

In the preceding section we saw that there is a semi-orthogonal decomposition

\[
\Db(X) = \left\langle \mathcal{A}, \mathcal{L}_1, \dots ,\mathcal{L}_{11}\right\rangle,
\]
where $\mathcal{A} \neq 0$ and does not contain any further exceptional objects. In fact, all objects in $\mathcal{A}$ have vanishing Chern character. Here we produce some explicit nontrivial objects in $\mathcal{A}$.  We will later need the following

\begin{lem}\xlabel{lOther}
Let $\mathcal{O}_{\tau}$ be a nontrivial torsion bundle on $X$. Then the sequence
\[
(\mathcal{O}_{\tau },\mathcal{L}_2 , \dots ,\mathcal{L}_{11})
\]
is also exceptional.
\end{lem}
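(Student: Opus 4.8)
The plan is to verify the three defining properties of an exceptional sequence for $(\mathcal{O}_{\tau}, \mathcal{L}_2, \dots, \mathcal{L}_{11})$, reducing all but one point to the computations already made for Theorem \ref{tMain}. Since $h^1(X, \mathcal{O}_X) = h^2(X, \mathcal{O}_X) = 0$, every line bundle on $X$ is exceptional, so $\mathcal{O}_{\tau}$ is exceptional; and $(\mathcal{L}_2, \dots, \mathcal{L}_{11})$ is a subsequence of the exceptional sequence of Theorem \ref{tMain}, hence is itself exceptional, which settles $\mathrm{RHom}^{\bullet}(\mathcal{L}_i, \mathcal{L}_j) = 0$ for all $11 \geq i > j \geq 2$. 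The only new relations are those with the first member, namely
\[
\mathrm{RHom}^{\bullet}(\mathcal{L}_i, \mathcal{O}_{\tau}) = H^{\bullet}(X, \mathcal{L}_i^{\vee} \otimes \mathcal{O}_{\tau}) = 0, \qquad i = 2, \dots, 11.
\]

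Because $\mathcal{O}_{\tau}$ is numerically trivial, $\mathcal{L}_i^{\vee} \otimes \mathcal{O}_{\tau}$ has the same numerical class as $\mathcal{L}_i^{\vee}$, so Riemann--Roch gives $\chi(X, \mathcal{L}_i^{\vee} \otimes \mathcal{O}_{\tau}) = \chi(X, \mathcal{L}_i^{\vee}) = \chi(\mathcal{L}_i, \mathcal{L}_1) = 0$, the last equality because $\mathcal{L}_1 = \mathcal{O}_X$ and the sequence of Theorem \ref{tMain} is exceptional. It therefore suffices to prove the two vanishings
\[
h^0(X, \mathcal{L}_i^{\vee} \otimes \mathcal{O}_{\tau}) = 0 \quad \text{and} \quad h^2(X, \mathcal{L}_i^{\vee} \otimes \mathcal{O}_{\tau}) = h^0(X, \mathcal{L}_i \otimes K_X \otimes \mathcal{O}_{\tau}^{-1}) = 0,
\]
the second equality being Serre duality; then $h^1 = 0$ follows from $\chi = 0$. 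These are precisely the torsion twists of the vanishings established for the pairs $(i,1)$ in the proof of Theorem \ref{tMain}.

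For the first vanishing I would argue as follows. When $i = 3, 7$ the bundle $\mathcal{L}_i^{\vee}$ has degree $-1$, hence (as $K_X$ is nef) has no effective representative and $h^0 = 0$. For the remaining $i$ the class $\mathcal{L}_i^{\vee}$ was shown in the proof of Theorem \ref{tMain} not to be numerically equivalent to any effective divisor; since this is unchanged by the torsion twist, Lemma \ref{lObvious} gives $h^0(X, \mathcal{L}_i^{\vee} \otimes \mathcal{O}_{\tau}) = 0$. Likewise, for $i \neq 3, 7$ the degree-one class $\mathcal{L}_i \otimes K_X$ is (again by the proof of Theorem \ref{tMain}) not numerically equivalent to an effective divisor, so Lemma \ref{lObvious} yields the second vanishing as well.

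The only case that is not purely numerical, and the heart of the matter, is the second vanishing for $i = 3, 7$, where $\mathcal{L}_i \otimes K_X$ equals $\mathcal{M} \otimes K_X$ resp.\ $\mathcal{N} \otimes K_X$, a degree-two class of the shape $2K_X + E^{\pm}_I + E^{\pm}_J - E^{\pm}_K - E^{\pm}_L$ governed by Lemma \ref{l:TorsionEL}. Here the torsion twist is genuinely dangerous: by that lemma a nontrivial twist of such a class acquires a one-dimensional space of sections exactly when the class has the form $K_X + L^{\pm}_k$, which by the proof of Lemma \ref{l:TorsionEL} occurs only for the configurations in which the two added curves $\{E_I, E_J\}$ are of opposite type and the two subtracted curves $\{E_K, E_L\}$ are of opposite type. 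The decisive check is that neither of our classes satisfies this: in $\mathcal{M} \otimes K_X = 2K_X + E^+_{0,0} + E^+_{4,0} - E^+_{1,0} - E^+_{2,0}$ the added curves $E^+_{0,0}, E^+_{4,0}$ are both of type $+$, while in $\mathcal{N} \otimes K_X = 2K_X + E^+_{0,0} + E^-_{3,0} - E^+_{1,0} - E^+_{2,0}$ the subtracted curves $E^+_{1,0}, E^+_{2,0}$ are both of type $+$. Hence neither class is of the form $K_X + L^{\pm}_k$, and Lemma \ref{l:TorsionEL} gives $h^0 = 0$ for every nontrivial torsion twist, in particular for the nontrivial class $\mathcal{O}_{\tau}^{-1}$. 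This completes the verification; the main obstacle throughout is exactly this last sign bookkeeping, since the conclusion would fail verbatim had either class landed in the exceptional family of Lemma \ref{l:TorsionEL}.
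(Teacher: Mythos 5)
Your overall reduction is exactly the paper's: Riemann--Roch, Serre duality, Lemma \ref{lObvious} and the non-effectivity checks already made for Theorem \ref{tMain} dispose of every case except the two degree-two classes $\mathcal{M}\otimes K_X$ and $\mathcal{N}\otimes K_X$, for which one must show that they are not of the form $K_X+L^{\pm}_k$ so that Lemma \ref{l:TorsionEL} kills all their nontrivial torsion twists. (The paper's proof of this lemma is literally that reduction, with the last point verified by machine in \cite{BBS}.) The gap is in your ``decisive check''. Writing $E^{\pm}_i$ for $E^{\pm}_{i,0}$, the proof of Lemma \ref{l:TorsionEL} does \emph{not} say that the classes of type $K_X+L^{\pm}_k$ are those whose added pair and subtracted pair each consist of curves of opposite type; it says that, after rewriting $D$ in the normalized form $8K_X-2L^0_0-E^{\pm}_1-E^{\pm}_2-E^{\pm}_3-E^{\pm}_4$ (the missing index moved to $0$ by $\delta$), exactly the two sign patterns $+--+$ and $-++-$ occur. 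A criterion phrased in terms of same/opposite types of the added and subtracted pairs cannot work, because that presentation of $D$ is not unique: the linear equivalence $E^+_i+E^-_i=3K_X-L^0_0$ gives $E^+_I-E^+_K=E^-_K-E^-_I$, so an added and a subtracted curve may be exchanged at the cost of flipping both their types. Concretely, $\mathcal{M}\otimes K_X=2K_X+E^+_0+E^+_4-E^+_1-E^+_2$ (both pairs of equal type) is the \emph{same} divisor class as $2K_X+E^+_0+E^-_1-E^-_4-E^+_2$ (both pairs of opposite type), so your criterion declares one and the same class both safe and dangerous. The criterion is also outright false: $2K_X+E^-_1+E^-_4-E^-_2-E^-_3$, with added pair and subtracted pair each of equal type, rewrites to $8K_X-2L^0_0-E^+_1-E^-_2-E^-_3-E^+_4$, i.e.\ to the dangerous pattern $+--+$.

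What actually has to be verified is that the normalized patterns of your two classes avoid $+--+$ and $-++-$. Rewriting $\mathcal{M}\otimes K_X$ (missing index $3$, shift by $\delta^2$) gives the pattern $(-,-,+,+)$, and rewriting $\mathcal{N}\otimes K_X$ (missing index $4$, shift by $\delta$) gives $(-,+,+,+)$; neither is dangerous, so the conclusion you want is true. Note that $(-,-,+,+)$ and the dangerous $(+,-,-,+)$ contain exactly the same multiset of signs: the distinction is positional, so no bookkeeping of types alone can detect it. To close the proof you must either carry out this normalization explicitly or compare numerical classes directly against $K_X+L^{\pm}_k$, which is what the paper does (by computer, in \cite{BBS}). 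As written, the key step of your argument is unsupported even though its conclusion is correct.
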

\begin{proof} By Lemma \ref{lObvious}  and Lemma \ref{l:TorsionEL}
it suffices to show that the line bundles $\mathcal{O}_X (K_X) \otimes \mathcal{L}_i$ for $i=3$, $7$  which are of the form 
\[
\mathcal{O}_X ( 2K_X +E^{\pm}_I +E^{\pm}_J -E_K^{\pm} -E^{\pm}_L ) 
\]
are not numerically equivalent to $K+L^\pm_k$. This is again checked in \cite{BBS}.
\end{proof}

Concretely, notice that for any nontrivial torsion bundle $\mathcal{O}_{\tau}$ on $X$ we have 
\[
\mathrm{Ext}^2 (\mathcal{O}_X,\mathcal{O}_{\tau }) \simeq H^0 (X,K_X \otimes \mathcal{O}_{\tau }^{-1 })^{\vee } \simeq \mathbb{C}
\]
hence we get a nontrivial arrow in $\Db(X)$:
\[
\mathcal{O}_X \to \mathcal{O}_{\tau } [2] \, .
\]
Let $C_{\tau }$ be the mapping cone of this. Note that $C_{\tau}$ is nontrivial and its class in the Grothendieck group $\mathrm{K} (X)$ is $5$-torsion: indeed, for the Fermat cover $p\colon Y \to X$ we have in $\mathrm{K} (X)$
\[
5([\mathcal{O}_X] - [\mathcal{O}_{\tau }]) = p_{\ast} p^{\ast} ([\mathcal{O}_X] - [\mathcal{O}_{\tau }]) = p_{\ast}([\mathcal{O}_Y] - [\mathcal{O}_Y]) = 0\, .
\]

Then we have:

\begin{prop}\xlabel{pOrthogonal}
The object $C_{\tau }$ is nontrivial and in $\mathcal{A}$, i.e.\ $\mathrm{RHom}^{\bullet} (\mathcal{L}_i,C_{\tau }) = 0$ for all $i=1,\dots ,11$.
\end{prop}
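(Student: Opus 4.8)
The plan is to show that $C_\tau$ is right-orthogonal to the entire exceptional sequence by exploiting the triangle defining $C_\tau$ together with the two exceptional sequences we already control. Recall $C_\tau$ sits in a distinguished triangle
\[
\mathcal{O}_X \to \mathcal{O}_\tau[2] \to C_\tau \to \mathcal{O}_X[1],
\]
so for any object $F$ the graded space $\mathrm{RHom}^\bullet(F, C_\tau)$ fits into a long exact sequence built from $\mathrm{RHom}^\bullet(F, \mathcal{O}_X)$ and $\mathrm{RHom}^\bullet(F, \mathcal{O}_\tau[2])$. Hence it suffices to prove that both $\mathcal{O}_X$ and $\mathcal{O}_\tau$ are right-orthogonal to each $\mathcal{L}_i$, i.e.\ $\mathrm{RHom}^\bullet(\mathcal{L}_i, \mathcal{O}_X) = 0$ and $\mathrm{RHom}^\bullet(\mathcal{L}_i, \mathcal{O}_\tau) = 0$ for all $i$.

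First I would handle the indices $i = 2, \dots, 11$. Here the key observation is that $\mathcal{O}_X = \mathcal{L}_1$ is the first member of the sequence $(\mathcal{L}_1, \dots, \mathcal{L}_{11})$ of Theorem \ref{tMain}, so $\mathrm{RHom}^\bullet(\mathcal{L}_i, \mathcal{L}_1) = 0$ for $i > 1$ directly by the semiorthogonality established there. Similarly, $\mathcal{O}_\tau$ is the first member of the exceptional sequence $(\mathcal{O}_\tau, \mathcal{L}_2, \dots, \mathcal{L}_{11})$ furnished by Lemma \ref{lOther}, so $\mathrm{RHom}^\bullet(\mathcal{L}_i, \mathcal{O}_\tau) = 0$ for $i = 2, \dots, 11$ as well. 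Thus for $i \ge 2$ both contributions to the long exact sequence vanish and $\mathrm{RHom}^\bullet(\mathcal{L}_i, C_\tau) = 0$ is immediate.

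The remaining and genuinely separate case is $i = 1$, that is $\mathcal{L}_1 = \mathcal{O}_X$ itself, which I expect to be the main point of the argument since semiorthogonality no longer applies. Here $C_\tau$ is by construction the cone of the chosen nonzero map $\mathcal{O}_X \to \mathcal{O}_\tau[2]$, so I would compute $\mathrm{RHom}^\bullet(\mathcal{O}_X, C_\tau)$ directly from the triangle. The relevant terms are $\mathrm{RHom}^\bullet(\mathcal{O}_X, \mathcal{O}_X) \simeq H^\bullet(X, \mathcal{O}_X)$, which is $\mathbb{C}$ in degree $0$ (as $p_g = q = 0$), and $\mathrm{RHom}^\bullet(\mathcal{O}_X, \mathcal{O}_\tau[2]) \simeq H^\bullet(X, \mathcal{O}_\tau)[2]$; by Serre duality and $H^0(X,\mathcal{O}_\tau) = 0$, $H^2(X,\mathcal{O}_\tau) \simeq H^0(X, K_X \otimes \mathcal{O}_\tau^{-1})^\vee \simeq \mathbb{C}$ while $H^1(X,\mathcal{O}_\tau) = 0$, so this term is $\mathbb{C}$ placed in cohomological degree $0$. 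The crux is that the connecting map in the long exact sequence is precisely multiplication by the class of the defining morphism $\mathcal{O}_X \to \mathcal{O}_\tau[2]$, which is the nonzero (hence, as both source and target are one-dimensional, bijective) element of $\mathrm{Ext}^2(\mathcal{O}_X, \mathcal{O}_\tau)$. That isomorphism kills both $H^0(\mathcal{O}_X)$ and $H^2(\mathcal{O}_\tau)$, forcing $\mathrm{RHom}^\bullet(\mathcal{O}_X, C_\tau) = 0$. The main obstacle is verifying cleanly that this connecting homomorphism is exactly composition with the defining class and thus an isomorphism; once that is in place, the vanishing for $i = 1$ follows and, combined with the semiorthogonality cases above, gives $\mathrm{RHom}^\bullet(\mathcal{L}_i, C_\tau) = 0$ for all $i = 1, \dots, 11$, so $C_\tau \in \mathcal{A}$. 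Nontriviality of $C_\tau$ follows since its class in $\mathrm{K}(X)$ is the nonzero $5$-torsion element $[\mathcal{O}_X] - [\mathcal{O}_\tau]$ shifted, as noted before the statement.
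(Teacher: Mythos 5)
Your proposal is correct and follows essentially the same route as the paper: the same long exact sequence from the defining triangle, the same appeal to the exceptionality of $(\mathcal{L}_1,\dots,\mathcal{L}_{11})$ and of $(\mathcal{O}_{\tau},\mathcal{L}_2,\dots,\mathcal{L}_{11})$ (Lemma \ref{lOther}) for $i\ge 2$, and the same identification of the map $H^0(X,\mathcal{O}_X)\to H^2(X,\mathcal{O}_{\tau})$ as composition with the defining class for $i=1$. The one step you flag as the ``main obstacle'' is in fact immediate: that map is not a connecting homomorphism but the map induced by the first arrow of the triangle under the functor $\mathrm{RHom}(\mathcal{O}_X,-)$, i.e.\ post-composition with $\mathcal{O}_X\to\mathcal{O}_{\tau}[2]$ by plain functoriality, exactly as the paper asserts.
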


\begin{proof}
From the distinguished triangle
\[
\mathcal{O}_X \to \mathcal{O}_{\tau}[2] \to C_{\tau } \to \mathcal{O}_X [1]
\]
we get a long exact sequence
\[
\dots \to \mathrm{Ext}^k (\mathcal{O}_X ,\mathcal{O}_X) \to \mathrm{Ext}^k (\mathcal{O}_X,\mathcal{O}_{\tau }[2]) \to \mathrm{Ext}^k (\mathcal{O}_X,C_{\tau }) \to \dots \, .
\]
As $\mathrm{Ext}^k (\mathcal{O}_X ,\mathcal{O}_X) = H^k (X,\mathcal{O}_X ) =0$ for $k\neq 0$ and $\mathrm{Ext}^k (\mathcal{O}_X,\mathcal{O}_{\tau }[2])= H^{k+2}(X,\mathcal{O}_{\tau })$ (note also that
\[
H^0 (X,\mathcal{O}_{\tau }) =0, \: H^1 (X,\mathcal{O}_{\tau } ) = 0, \: H^2 (X,\mathcal{O}_{\tau }) = \mathbb{C} \, \mathrm{)}
\]
we get
\[
 0 \to \mathrm{Ext}^{-1}(\mathcal{O}_X,C_{\tau }) \to H^0 (X,\mathcal{O}_X) \to H^2 (X,\mathcal{O}_{\tau }) \to \mathrm{Ext}^{0}(\mathcal{O}_X,C_{\tau }) \to 0
\]
and $\mathrm{Ext}^k (\mathcal{O}_X,C_{\tau } ) =0$ for $k \neq -1,0$. The map between $H^0(X,\mathcal{O}_X)\simeq \mathrm{Hom} (\mathcal{O}_X\mathcal{O}_X)$ and $H^2 (X,\mathcal{O}_{\tau }) \simeq \mathrm{Hom} (\mathcal{O}_X,\mathcal{O}_{\tau}[2])$ (both one-dimensional spaces) is given by the composition with the nontrivial arrow $\mathcal{O} \to \mathcal{O}_{\tau } [2]$, hence is an isomorphism. Thus $\mathrm{Ext}^k (\mathcal{O}_X,C_{\tau } ) =0$ for all $k$. Notice that $\mathcal{L}_1 = \mathcal{O}_X$.
\smallskip

Consider now one of the $\mathcal{L}_i$ with $i=2, \dots ,11$. Again we get a long exact sequence
\[
\dots \to \mathrm{Ext}^k (\mathcal{L}_i ,\mathcal{O}_X) \to \mathrm{Ext}^k(\mathcal{L}_i,\mathcal{O}_{\tau }[2]) \to \mathrm{Ext}^k (\mathcal{L}_i, C_{\tau }) \to \dots \, .
\]
But because the original sequence is exceptional, $\mathrm{Ext}^k (\mathcal{L}_i,\mathcal{O}_X ) =0$ for all $i\geq 2$ and all $k$, and likewise, by Lemma \ref{lOther}, $\mathrm{Ext}^k (\mathcal{L}_i, \mathcal{O}_{\tau}[2] ) \simeq \mathrm{Ext}^{k+2}( \mathcal{L}_i \otimes \mathcal{O}_{\tau}^{-1}, \mathcal{O}_X)=0$ for all $i\geq 2$ and all $k$. Hence, $\mathrm{Ext}^k (\mathcal{L}_i,C_{\tau }) = 0$ for all $i$ and $k$, and $C_{\tau}$ is in $\mathcal{A}$ as claimed.
\end{proof}

\section{The $A_{\infty}$-algebra of derived endomorphisms of the exceptional sequence}\xlabel{sRigidity}

Consider a deformation $\{ X_t \}$ of $X = X_0$ among numerical Godeaux surfaces with $\ZZ/5$-torsion, all of whose canonical models are $\ZZ/5$-quotients of quintics in $\PP^3$. Since, $p_g=q=0$, the sequence of line bundles $(\mathcal{L}_i)$ deforms along with the surface, and by upper-semicontinuity,
\[
(\mathcal{L}_{1, t}, \dots , \mathcal{L}_{11, t} )
\]
is still an exceptional sequence for $t$ in a small neighbourhood of $0$ in the family. Moreover, since Bloch's conjecture holds for all these surfaces by Voisin's work \cite{Voisin}, there is a decomposition
\[
\mathrm{D}^b (X_t ) = \langle \mathcal{A}_t, \mathcal{L}_{1, t}, \dots , \mathcal{L}_{11, t} \rangle
\]
with $\mathrm{K}_0 (\mathcal{A}_t ) = \ZZ/5$. 

Consider $\mathbb{L}_t = \bigoplus_{i=1}^{11} \mathcal{L}_{i, t}$ and the differential graded algebra $\mathfrak{A}_t = \mathrm{RHom}^{\bullet} (\mathbb{L}_t, \mathbb{L}_t)$ of derived endomorphisms of the exceptional sequence above. It has a minimal model in the sense of \cite{Keller01}, 3.3, i.e. we consider the Yoneda algebra $H^* (\mathfrak{A}_t)$ together with its $A_{\infty}$-structure such that $m_1=0$, $m_2=$Yoneda multiplication and there is a quasi-isomorphism of $A_{\infty}$-algebras $\mathfrak{A}_t \simeq H^* (\mathfrak{A}_t)$ lifting the identity of $H^* (\mathfrak{A}_t)$. The goal of this section is to show 

\begin{prop}\xlabel{pRigidity}
The $A_{\infty}$-algebra $H^* (\mathfrak{A}_t)$ is constant in a neighbourhood of a generic point of the family $\{ X_t \}$. The categories
\[
\langle \mathcal{L}_{1, t}, \dots , \mathcal{L}_{11, t} \rangle
\]
are all equivalent in that neighbourhood. 
\end{prop}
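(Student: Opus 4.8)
The plan is to reduce the statement to a rigidity statement about $A_\infty$-algebras and then to control the relevant deformations by Hochschild cohomology. By the standard dictionary for exceptional collections, the subcategory $\mathcal{T}_t := \langle \mathcal{L}_{1,t},\dots,\mathcal{L}_{11,t}\rangle$ is equivalent to $\Perf(\mathfrak{A}_t)$, hence to the perfect derived category of its minimal $A_\infty$-model $H^*(\mathfrak{A}_t)$; two such subcategories are equivalent as soon as the corresponding minimal $A_\infty$-algebras are $A_\infty$-quasi-isomorphic. Thus it suffices to prove that the $A_\infty$-algebras $H^*(\mathfrak{A}_t)$ are all quasi-isomorphic for $t$ in a neighbourhood of a generic point of $\{X_t\}$.

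First I would fix the underlying graded vector space. The graded pieces are $\mathrm{Ext}^k(\mathcal{L}_{i,t},\mathcal{L}_{j,t}) = H^k(X_t,\mathcal{L}_{i,t}^\vee\otimes\mathcal{L}_{j,t})$, whose Euler characteristics are constant across the family by Riemann--Roch and flatness. By upper-semicontinuity each individual $h^k$ attains its minimum on a dense Zariski-open subset of the base; intersecting these open loci, all the $h^k$ become constant on a neighbourhood of a generic point. On this locus the exceptional vanishings also persist (the diagonal Homs stay $\mathbb{C}$ and all lower-triangular $\mathrm{Ext}$'s stay $0$), so the graded vector space underlying $H^*(\mathfrak{A}_t)$, together with its quiver-like shape, is locally constant there. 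The restriction to a \emph{generic} point is essential, since at the special surface $X_0$ the groups $h^k$ could a priori jump up.

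With the graded space fixed to a single $V$, the collection of products $\{m_n^{(t)}\}_{n\ge 2}$ defines a family of $A_\infty$-structures on $V$, i.e.\ a classifying map $\Theta$ from a neighbourhood of the generic point of the base to the moduli of $A_\infty$-structures on $V$ modulo gauge equivalence. The derivative of $\Theta$ at $t$ takes values in $\mathrm{HH}^2(H^*(\mathfrak{A}_t)) = \mathrm{HH}^2(\mathcal{T}_t)$, the group classifying first-order $A_\infty$-deformations up to gauge. Concretely this derivative is the Kodaira--Spencer composite $H^1(X_t,T_{X_t}) \cong \mathrm{HH}^2(\Db(X_t)) \to \mathrm{HH}^2(\mathcal{T}_t)$, where the isomorphism is Hochschild--Kostant--Rosenberg (using $p_g=0$ and that $-K_{X_t}$ is not effective, whence $H^2(\mathcal{O}_{X_t})=H^0(\wedge^2 T_{X_t})=0$ and only $H^1(T_{X_t})$ survives), and the second arrow is the natural restriction to the admissible component $\mathcal{T}_t$ determined by the semiorthogonal decomposition. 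This class is exactly the one classifying the deformation of $\mathcal{T}_t$ obtained by deforming $X_t$ and carrying the rigid exceptional objects $\mathcal{L}_{i,t}$ along.

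The crux is then to show that this Kodaira--Spencer image vanishes for generic $t$; equivalently, that the nontrivial deformations of $X_t$ are entirely absorbed by the orthogonal complement $\mathcal{A}_t$, consistently with $\mathcal{A}_t$ (rather than $\mathcal{T}_t$) carrying the moduli of the family and with $\mathrm{K}_0(\mathcal{A}_t) = \ZZ/5$. Granting this, $\Theta$ has vanishing derivative on a connected dense-open locus, hence is locally constant there, so all the $H^*(\mathfrak{A}_t)$ are $A_\infty$-quasi-isomorphic and, by the first step, all the categories $\langle \mathcal{L}_{1,t},\dots,\mathcal{L}_{11,t}\rangle$ are equivalent near a generic point. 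I expect the vanishing of the restriction map to $\mathrm{HH}^2(\mathcal{T}_t)$ to be the main obstacle: ideally one shows $\mathrm{HH}^2(\mathcal{T}_t)=0$ outright, which gives rigidity directly, but in any case this requires describing the endomorphism $A_\infty$-algebra explicitly from the groups $H^k(X_t,\mathcal{L}_{i,t}^\vee\otimes\mathcal{L}_{j,t})$ and their Yoneda/Massey products, and checking that the cup-action of the Kodaira--Spencer class on these Ext-groups, $H^1(T_{X_t})\otimes H^q \to H^{q+1}$, induces only gauge-trivial changes of the products. In the spirit of the rest of the paper, this Hochschild-cohomology computation is best organised as an explicit, computer-assisted check.
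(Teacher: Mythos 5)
Your proposal sets up a sensible deformation-theoretic framework (minimal $A_\infty$-models, a classifying map to $A_\infty$-structures modulo gauge, derivative identified with the Kodaira--Spencer composite $H^1(T_{X_t})\cong \mathrm{HH}^2(\Db(X_t))\to \mathrm{HH}^2(\mathcal{T}_t)$), but it stops exactly where the mathematical content of the proposition lies: you never prove the vanishing of that Kodaira--Spencer image, nor the stronger statement $\mathrm{HH}^2(\mathcal{T}_t)=0$; you explicitly flag it as ``the main obstacle'' and defer it to an unspecified computer-assisted check. Without that step nothing is proven --- a priori the deformation of $X_t$ could act nontrivially on $\mathcal{T}_t$, and the heuristic that the moduli should be ``absorbed by $\mathcal{A}_t$'' is precisely what has to be established. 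In addition, the identification of the derivative of your classifying map with the Kodaira--Spencer composite, and the passage from vanishing derivative to local constancy (openness of the gauge orbit, formal versus actual triviality), are themselves nontrivial pieces of machinery that would need justification; none of this is needed in the paper's argument.

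The paper closes the gap by an elementary degree count, with no Hochschild cohomology at all. Lemma \ref{lRigidity} pins down the shape of the Yoneda algebra: the nine degree-zero bundles are completely orthogonal, as are $(\mathcal{L}_3,\mathcal{L}_7)$; $\mathrm{Hom}$'s vanish except on the diagonal and at $(2,3)$; $\mathrm{Ext}^2(\mathcal{L}_i,\mathcal{L}_j)=\mathbb{C}$ when $\chi=1$ and $\mathrm{Ext}^2(\mathcal{L}_i,\mathcal{L}_j)=0$ when $\chi=-1$. From this, $m_2$ between pairwise distinct objects is identically zero (either the source or the target vanishes), and since $m_d$ has degree $2-d$ while the algebra lives in degrees $0,1,2$, every higher product $m_d$, $d\ge 3$, is forced to vanish: for $d\ge 5$ some factor of the source is zero, for $d=4$ the source sits in degree $\ge 5$, and for $d=3$ the only borderline composition $\mathrm{hom}(\mathcal{L}_{2,t},\mathcal{L}_{3,t})\otimes\mathrm{hom}(\mathcal{L}_{3,t},\mathcal{L}_{i,t})\otimes\mathrm{hom}(\mathcal{L}_{i,t},\mathcal{L}_{7,t})$ would land in $\mathrm{Ext}^2(\mathcal{L}_{2,t},\mathcal{L}_{7,t})=0$ by part (5) of the lemma. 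Thus \emph{every} minimal strictly unital $A_\infty$-structure on this graded vector space is the trivial one --- a pointwise, non-infinitesimal rigidity statement that immediately gives constancy of $H^*(\mathfrak{A}_t)$ and equivalence of the categories, sidestepping your entire classifying-map formalism. Note also that the same degree count is what would make your hoped-for $\mathrm{HH}^2$-vanishing true; so the data you would feed your ``computer-assisted check'' is exactly Lemma \ref{lRigidity}, and supplying that argument is what separates your outline from a proof.
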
 

The argument is inspired by the proof of rigidity for the exceptional sequences similar to ours in the recent preprint \cite{A-O12} by Alexeev and Orlov. We also would like to thank Dmitri Orlov for explaining the argument to us and helping with the proof of the statement in Proposition \ref{pRigidity}. 

We use

\begin{lem}\xlabel{lRigidity}
The following hold:
\begin{itemize}
\item[(1)]
The sequence \[ (\mathcal{L}_1, \mathcal{L}_2, \mathcal{L}_4, \mathcal{L}_5, \mathcal{L}_6, \mathcal{L}_8, \mathcal{L}_9, \mathcal{L}_{10}, \mathcal{L}_{11})\] is completely orthogonal. Moreover, $(\mathcal{L}_3, \mathcal{L}_7)$ are completely orthogonal. 
\item[(2)]
We have
\begin{gather*}
\chi (\mathcal{L}_i, \mathcal{L}_3 ) = -1,\;  i < 3, \quad \chi (\mathcal{L}_j, \mathcal{L}_7) = -1,\;   j < 7, j \neq 3 \\
\chi (\mathcal{L}_3, \mathcal{L}_s ) = 1, \; 3 < s, s\neq 7, \quad \chi (\mathcal{L}_7, \mathcal{L}_t ) = 1, \; 7 < t \, .
\end{gather*}
\item[(3)]
One has 
\[
\mathrm{Hom} (\mathcal{L}_i, \mathcal{L}_j) \neq 0 \iff i=j \; \mathrm{or }\;  (i,j) = (2,3) \, .
\]
\item[(4)]
If $\chi (\mathcal{L}_i, \mathcal{L}_j ) =1$, then $\mathrm{Ext}^2 (\mathcal{L}_i, \mathcal{L}_j) = \mathbb{C}$. 
\item[(5)]
If $\chi (\mathcal{L}_i, \mathcal{L}_j ) = -1$, then $\mathrm{Ext}^2 (\mathcal{L}_i, \mathcal{L}_j) = (0)$.
\end{itemize}
\end{lem}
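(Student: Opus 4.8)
The plan is to reduce each of the five statements to a computation of the graded pieces $\Hom$, $\mathrm{Ext}^1$ and $\mathrm{Ext}^2$ between the line bundles involved, using three ingredients. First, Riemann--Roch gives $\chi(\mathcal{L}_i,\mathcal{L}_j)=1+\tfrac12(l_j-l_i)\cdot(l_j-l_i-K_X)$, a purely lattice-theoretic quantity once the numerical classes $l_i$ of Proposition \ref{propNumerics} and the intersection form of $\mathbf{1}\perp(-E_8)$ are inserted. Second, Serre duality yields $\mathrm{Ext}^2(\mathcal{L}_i,\mathcal{L}_j)\cong H^0(X,\mathcal{L}_j^{\vee}\otimes\mathcal{L}_i\otimes K_X)^{\vee}$, converting every $\mathrm{Ext}^2$ into a space of global sections. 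Third, the exceptionality established in Theorem \ref{tMain} gives $\mathrm{Ext}^{\bullet}(\mathcal{L}_i,\mathcal{L}_j)=0$ for all $i>j$, so only the diagonal and the forward pairs $i<j$ remain to be analysed. A key simplification is that $K_X$ is ample: the $\Group$-action on the Fermat quintic is fixed-point free, so $Y$ is an \'etale cover of $X$ and $p^{\ast}K_X=K_Y=\mathcal{O}_Y(1)$ is ample. Consequently any effective divisor of $K_X$-degree $0$ is numerically trivial, which makes every class of negative degree, and every class of degree $0$ with nonzero self-intersection, non-effective.

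With this in hand, Part (2) is a direct Riemann--Roch bookkeeping exercise in the lattice; I would simply tabulate the values (they can also be checked with \cite{BBS}). Part (2) shows, moreover, that the only off-diagonal pairs with $\chi=\pm1$ are the forward pairs in which exactly one of the two degree-one bundles $\mathcal{L}_3$ or $\mathcal{L}_7$ occurs. For Part (5) ($\chi=-1$) I would apply Serre duality: in each such case the class $\mathcal{L}_j^{\vee}\otimes\mathcal{L}_i\otimes K_X$ has degree $0$ and self-intersection $-4$, hence is not effective by ampleness of $K_X$, so $\mathrm{Ext}^2=0$.

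Part (3) I would split according to the sign of $i-j$. The diagonal contributes $\Hom(\mathcal{L}_i,\mathcal{L}_i)=\mathbb{C}$, the case $i>j$ gives $0$ by exceptionality, and for $i<j$ one has $\Hom(\mathcal{L}_i,\mathcal{L}_j)=H^0(\mathcal{L}_i^{\vee}\otimes\mathcal{L}_j)$. The class $l_j-l_i$ then has $K_X$-degree $-1$, $0$ or $1$: in the first case, and in the second whenever the self-intersection is $-2$, the class is not effective by ampleness of $K_X$; in the degree-one case it has self-intersection $-3$ and is numerically a line $L^{\pm}_k$. Here the torsion decides the outcome: by the classification of effective degree-one divisors of Section \ref{sDegree1} and the torsion lemmas of Section \ref{sTorsion}, such a class has a section exactly when it is \emph{linearly} equivalent to an honest line, and a check of all these forward classes shows that this happens only for $\mathcal{L}_2^{\vee}\otimes\mathcal{L}_3\cong\mathcal{O}_X(K_X+E^+_{0,0}+E^+_{3,0}-E^+_{1,0}-E^+_{2,0})$; the remaining ones are nontrivial torsion twists of a line and carry no sections. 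Part (1) is then a formal consequence: for the nine degree-zero bundles and for the pair $(\mathcal{L}_3,\mathcal{L}_7)$ every off-diagonal $\chi$ vanishes, the $\Hom$'s vanish in both directions, and once the $\mathrm{Ext}^2$'s are known to vanish too, the relation $\dim\mathrm{Ext}^1=\dim\Hom+\dim\mathrm{Ext}^2-\chi$ forces $\mathrm{Ext}^1=0$.

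The main obstacle is therefore the vanishing (in Part (1)) and the exact value (in Part (4)) of the \emph{forward} $\mathrm{Ext}^2$ groups, where neither ampleness nor a crude degree count suffices. For Part (1) the Serre-dual classes $K_X\pm(\text{root})$ have degree $1$ and self-intersection $-1$, i.e.\ they are numerically elliptic curves $E^{\pm}_{i,j}$; the backward ones vanish by exceptionality, but each forward one must be shown to be a nontrivial torsion twist of such a curve, and hence to have no sections, via Lemma \ref{lTorsionElliptic}. For Part (4), Serre duality turns $\mathrm{Ext}^2$ into $H^0$ of a degree-two, self-intersection-two class such as $\mathcal{L}_4^{\vee}\otimes\mathcal{L}_3\otimes K_X\cong\mathcal{O}_X(2K_X+E^+_{0,0}-E^+_{1,0})$, whose Euler characteristic is $1$ and whose $H^2$ vanishes, so that $\dim\mathrm{Ext}^2=1+\dim\mathrm{Ext}^1$; the content is the vanishing $\mathrm{Ext}^1=0$, equivalently that this degree-two linear system consists of a single divisor. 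I expect both of these to be settled, as in the proof of Theorem \ref{tMain}, by passing to the Fermat cover or the Campedelli model and computing the relevant ideals and section spaces explicitly with \cite{BBS} --- this is the computational heart of the lemma.
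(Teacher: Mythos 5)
Your proposal is correct, and it is in substance the paper's own argument --- which the authors compress into one line: ``Most of it is checked already in \cite{BBS}. The only additional statement that we needed to check with Macaulay 2 is (4). The rest is obvious by direct computation.'' What you add is the conceptual scaffolding that this one-liner leaves implicit, and it checks out. Your use of the ampleness of $K_X$ (via the \'etale cover $p\colon Y \to X$ with $p^{\ast}K_X = K_Y = \mathcal{O}_Y(1)$) genuinely replaces raw computation in several places: the Serre-dual classes in part (5) are indeed uniformly of degree $0$ and self-intersection $-4$ (in the notation of Proposition \ref{propNumerics} they are $l_i + B_1$ resp.\ $l_i + B_2$, and $B_1$, $B_2$ are orthogonal to all the relevant partial sums $A_1 + \dots + A_m$), so non-effectivity is automatic; likewise the degree-$0$ forward differences in parts (1) and (3) are roots of square $-2$, the degree-$1$ ones have square $-3$, and your reduction to the classification of effective degree-$1$ divisors in Section \ref{sDegree1} (square $-3$ forces a line, since the elliptic curves have square $-1$ and the quintic images square $+1$, and ampleness forces irreducibility) is sound. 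You also correctly isolate the irreducible computational kernel, which coincides with what \cite{BBS} actually verifies: (i) deciding which numerically-line, resp.\ numerically-elliptic, forward classes are honestly effective rather than nontrivial torsion twists --- this settles the single nonvanishing $\mathrm{Hom}(\mathcal{L}_2, \mathcal{L}_3)$ in (3) and the forward $\mathrm{Ext}^2$ vanishings in (1), via Lemma \ref{lTorsionElliptic} and Lemma \ref{lObvious} (for some pairs Corollary \ref{cGeneral}(2) already suffices, and some classes need not even be numerically effective, which only makes the vanishing easier); and (ii) part (4), i.e.\ $h^0 = 1$ for the degree-$2$, square-$2$ dual classes such as $\mathcal{O}_X(2K_X + E^+_{0,0} - E^+_{1,0})$, where your bookkeeping $\dim\mathrm{Ext}^2 = 1 + \dim\mathrm{Ext}^1$ is exact and the residual vanishing is precisely the one statement the authors single out as requiring a fresh Macaulay2 check. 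So the two proofs differ only in presentation: you establish lattice-theoretically and by ampleness what the paper delegates wholesale to the computer, deferring to \cite{BBS} only for the torsion-sensitive linear-equivalence questions, where no purely numerical argument can suffice.
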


\begin{proof}
Most of it is checked already in \cite{BBS}. The only additional statement that we needed to check with Macaulay 2 is (4). The rest is obvious by direct computation.
\end{proof}

Note that the same properties will hold for the sequence $(\mathcal{L}_{i,t})$ by upper-semicontinuity for $t$ in a small neighbourhood of $0$, except possibly  (3): it may happen that both $\dim \mathrm{Hom}(\mathcal{L}_{2,t}, \mathcal{L}_{3,t})$ and $\dim \mathrm{Ext}^1 (\mathcal{L}_{2,t}, \mathcal{L}_{3,t})$ go down by $1$ away from $t=0$. This is irrelevant for the subsequent argument. 
 
Let us recall now some facts about $A_{\infty}$-categories that we need to prove Proposition \ref{pRigidity}. A possible reference is the first chapter in Seidel's book \cite{Seidel}. In particular, in an $A_{\infty}$-category we are given a set of objects $X_i$ with a graded vector space $\mathrm{hom} (X_0, X_1)$ for any pair of objects, and composition maps of every order $d\ge 1$
\[
\mathrm{hom} (X_0, X_1) \otimes \mathrm{hom} (X_1, X_2) \otimes \dots \otimes \mathrm{hom}(X_{d-1}, X_d) \to \mathrm{hom} (X_0, X_d)[2-d] 
\]
satisfying the $A_{\infty}$-associativity equations, which we actually need not know precisely here. The important point is that $m_d$ is homogeneous of degree $2-d$. Another important point is cf. \cite{Seidel} that any homotopy unital $A_{\infty}$-category is quasi-isomorphic to a strictly unital one, i.e. we may assume
\[
m_d (a_0\otimes \dots \otimes a_{i-1}\otimes \mathrm{id} \otimes a_{i+1} \otimes \dots \otimes a_d ) = 0, d\ge 3
\]
which means $m_d$, $d\ge 3$, is zero as soon as one of its arguments is a homothetic automorphism of an object. We can now give the 

\begin{proof}(of Proposition \ref{pRigidity}) 
We think of the $\mathcal{L}_{i,t}$ as the objects of our $A_{\infty}$-category. It is clear that
\[
m_2 : \mathrm{hom} (X_0, X_1) \otimes \mathrm{hom} (X_1, X_2) \to \mathrm{hom} (X_0, X_2)
\]
is always the zero map in our case if $X_0$, $X_1$, $X_2$ are pairwise different; in fact, this follows from Lemma \ref{lRigidity}, part (1). Hence it suffices to prove that there is no higher multiplication, i.e. $m_i = 0$ for $i \ge 3$. Then the endomorphism algebra of our category is just a usual graded algebra, and the algebra structure is completely determined and does not deform.

Clearly, $m_d=0$ for $d\ge 5$: in fact, if $i< j<k<l<m <n$, one of the spaces
\begin{gather*}
\mathrm{RHom}^{\bullet} (\mathcal{L}_{i,t},\;  \mathcal{L}_{j,t}), \mathrm{RHom}^{\bullet} (\mathcal{L}_{j,t}, \mathcal{L}_{k,t})\\
\mathrm{RHom}^{\bullet} (\mathcal{L}_{k,t},\;  \mathcal{L}_{l,t}), \mathrm{RHom}^{\bullet} (\mathcal{L}_{l,t}, \mathcal{L}_{m,t}), \; \mathrm{RHom}^{\bullet} (\mathcal{L}_{m,t}, \mathcal{L}_{n,t})
\end{gather*}
is the zero space. Now look at $m_d=4$ (it is helpful to picture the $\mathcal{L}_{i,t}$, $i\neq 3,7$, as objects of one sort, say circles, and picture the $\mathcal{L}_{3,t}$, $\mathcal{L}_{7,t}$ as special, say, boxes).   By Lemma \ref{lRigidity} it follows that the smallest degree of a nonzero element in a space
\[
 \mathrm{hom} (\mathcal{L}_{i,t}, \mathcal{L}_{j,t}) \otimes \mathrm{hom} (\mathcal{L}_{j,t}, \mathcal{L}_{k,t}) \otimes \mathrm{hom} (\mathcal{L}_{k,t}, \mathcal{L}_{l,t}) \otimes \mathrm{hom} (\mathcal{L}_{l,t}, \mathcal{L}_{m, t}) 
\]
for $i<j<k<l<m$ is equal to $5$. But $m_4$ lowers the degree by $2$, and there are no $\mathrm{Ext}^3$'s. The case of $m_3$ follows similarly, but needs some more checking:  we have to look at possible nonzero compositions
\[
 \mathrm{hom} (\mathcal{L}_{i,t}, \mathcal{L}_{j,t}) \otimes \mathrm{hom} (\mathcal{L}_{j,t}, \mathcal{L}_{k,t}) \otimes \mathrm{hom} (\mathcal{L}_{k,t}, \mathcal{L}_{l,t})\to \mathrm{hom} (\mathcal{L}_{i,t}, \mathcal{L}_{l,t})
 \]
 for $i<j<k<l$, and find that the smallest degree of a nonzero element in the left-hand space is always $4$ except in one particular case which we will describe shortly.  However, $m_3$ lowers the degree by $1$, so degree $4$ elements are mapped to zero. Let us consider the particular case, where we compose the potential degree $0$ morphism from $\mathcal{L}_{2,t}$ to $\mathcal{L}_{3,t}$ with a degree $2$ morphism from $\mathcal{L}_{3,t}$ to some $\mathcal{L}_{i,t}$, $3 < i<7$, and then compose with a degree $1$ morphism to go to $\mathcal{L}_{7,t}$: this gives a degree $3$ element to which we may apply $m_3$ to get a degree $2$ element in $\mathrm{Ext}^2(\mathcal{L}_{2,t}, \mathcal{L}_{7,t})$: however, this space is zero by Lemma \ref{lRigidity}, part (5). This completes the proof. 
\end{proof}

\begin{remark}\xlabel{rStrongGenerator}
The arguments of this section have some other consequences which deserve mentioning. 
Consider the complementary category $\mathcal{A}$ to the sequence $(\mathcal{L}_i)$ and put $\mathcal{A}' = \langle \mathcal{A}, \mathcal{O}_X\rangle $. Then $\mathcal{A}'$ contains all elements of $\mathrm{Pic}(X)_{\mathrm{tors}}$, but 
\[
\mathcal{O}_X \oplus \mathcal{O}_{\tau} \oplus \mathcal{O}_{\tau^2}\oplus \mathcal{O}_{\tau^3} \oplus \mathcal{O}_{\tau^4}
\]
is not a strong generator of $\mathcal{A}'$: in fact, the only derived endomorphisms of this object are again homotheties of the respective torsion bundles, and $\mathrm{Ext}^2 (\mathcal{O}_{\tau^i}, \mathcal{O}_{\tau^j})= \mathbb{C}$ for $i\neq j$. Hence the Yoneda algebra $H^* (\mathfrak{D}) $ of this object (with its $A_{\infty}$-structure) has no higher multiplication (composing three degree $2$ morphisms gives something of degree $6$ which cannot be mapped to something nonzero by $m_3$ which has degree $-1$; similarly for the $m_i$, $i\ge 4$). Hence $H^* (\mathfrak{D})$ is a usual graded algebra; but then it easily follows that $\mathrm{HH}_0 ( H^* (\mathfrak{D}) ) = H^* (\mathfrak{D}) / [ H^* (\mathfrak{D}) , H^* (\mathfrak{D}) ]$ has rank $5$; this cannot be if the above were a generator since $\mathrm{HH}_* (\mathcal{A}' )$ has rank $1$. 

This indicates that there must be other objects in $\mathcal{A}$ which are ``not built out of the torsion on $X$", in particular are not of the type $C_{\tau }$. By Theorem 4 of \cite{Orlov09} one may obtain a strong generator of $\mathcal{A}$ by projecting $\mathcal{O}_X \oplus \mathcal{M} \oplus \mathcal{M}^2$ into $\mathcal{A}$, where $\mathcal{M}$ is a very ample line bundle on $X$. We do not know, however, how explicitly this projection is computable, and, moreover, if the resulting complex is still simple enough to allow any interesting conclusions about $\mathcal{A}$.
\end{remark}

\end{document}